\def\eqref#1{equation~\ref{#1}}
\def\plaineqref#1{\ref{#1}}
\def\1{\bm{1}}
\DeclareMathAlphabet{\mathsfit}{\encodingdefault}{\sfdefault}{m}{sl}
\SetMathAlphabet{\mathsfit}{bold}{\encodingdefault}{\sfdefault}{bx}{n}
\DeclareMathOperator*{\argmax}{arg\,max}
\LetLtxMacro{\originaleqref}{\plaineqref}
\renewcommand{\plaineqref}{(\originaleqref)}
\renewcommand*{\plaineqref}[1]{(\originaleqref{#1})}
\theoremstyle{definition}
\newtheorem{definition}{Definition}[]
\newtheorem{example}[definition]{Example}
\newtheorem{remark}[definition]{Remark}
\newtheorem*{setting*}{Setting}
\newtheorem*{assumption*}{Assumption}
\theoremstyle{plain}
\newtheorem{theorem}[definition]{Theorem}
\newtheorem{corollary}[definition]{Corollary}
\newtheorem{proposition}[definition]{Proposition}
\tikzset{
>=stealth',
punkt/.style={
           rectangle,
           rounded corners,
           draw=black, very thick,
           text width=6.5em,
           minimum height=2em,
           text centered},
pil/.style={
           ->,
           thick,
           shorten <=2pt,
           shorten >=2pt,}
}
\pgfplotsset{
  grid style={gray, dashed, very thin},
  every inner x axis line/.append style={pil},
  every inner y axis line/.append style={pil},
}
\title{The Geometry of Memoryless Stochastic Policy Optimization in Infinite-Horizon POMDPs}
\author{\bfseries{Johannes M\"uller} \\
Max Planck Institute for Mathematics in the Sciences, Leipzig, Germany \\
\texttt{jmueller@mis.mpg.de} \\
\AND
\bfseries{Guido Mont\'ufar}\\
Department of Mathematics and Department of Statistics, UCLA, CA, USA\\
Max Planck Institute for Mathematics in the Sciences, Leipzig, Germany\\
\texttt{montufar@math.ucla.edu} \\
}
\renewenvironment{abstract}{
\centerline{\large\scshape
Abstract}
\begin{quote}}{\par\end{quote}
}
\definecolor{ao}{rgb}{0.3, .7, 0.0}
\date{\large\rmfamily\today}
\begin{document}
\maketitle
\begin{abstract}
We consider the problem of finding the best memoryless stochastic policy for an infinite-horizon partially observable Markov decision process (POMDP) with finite state and action spaces with respect to either the discounted or mean reward criterion. We show that the (discounted) state-action frequencies and the expected cumulative reward are rational functions of the policy, whereby the degree is determined by the degree of partial observability. We then describe the optimization problem as a linear optimization problem in the space of feasible state-action frequencies subject to polynomial constraints that we characterize explicitly. This allows us to address the combinatorial and geometric complexity of the optimization problem using recent tools from polynomial optimization. In particular, we estimate the number of critical points and  
{
use the polynomial programming description of reward maximization to solve a navigation problem in a grid world.}
\end{abstract}

\section{Introduction} 

Markov decision processes (MDPs) were introduced by \citet{bellman1957markovian} as a model for sequential decision making and optimal planning \citep[see, e.g.,][]{howard1960dynamic, derman1970finite, puterman2014markov}. 
Many algorithms in reinforcement learning rely on the ideas and methods developed in the context of MDPs \citep[see, e.g.,][]{sutton2018reinforcement}. Often in practice, the decisions need to be made based only on incomplete information of the state of the system. This setting is modeled by partially observable Markov decision processes (POMDPs) introduced by \cite{d677280f-74cf-4f52-bc01-2a8f358f4f92} \citep[for a historical discussion see][]{10.2307/2631070}, which have become an important model for planning under uncertainty. In this work we pursue a geometric characterization of the policy optimization problem in POMDPs over the class of stochastic memoryless policies and its dependence on the degree of partial observability. 

It is well known that acting optimally in POMDPs may require memory \citep{d677280f-74cf-4f52-bc01-2a8f358f4f92}. A POMDP with unlimited memory policies can be modeled as a belief state MDP, where the states are replaced by probability distributions that serve as sufficient statistics for the previous observations \citep{kaelbling1998planning, Murphy00asurvey}. Finding an optimal policy in this class is PSPACE-complete for finite horizons \citep{papadimitriou1987complexity} and undecidable for infinite horizons \citep{MADANI20035,CHATTERJEE2016878}. Therefore, it is of interest to consider POMDPs with constrained policy classes. A natural class to consider are memoryless policies, also known as reactive or Markov policies, which select actions based solely on the current observations. In this case, it is useful to allow the actions to be selected stochastically, which not only allows for better solutions but also provides a continuous optimization domain \citep{singh1994learning}.

Although they are more restrictive than policies with memory, memoryless policies are attractive as they are easier to optimize and are versatile enough for certain applications \citep{tesauro1995temporal, 10.5555/645527.657452,NIPS1998_1cd38823, kober2013reinforcement}. In fact, finite-memory policies can be modeled in terms of memoryless policies by supplementing the state of the system with an external memory \citep{Littman:1993,Peshkin1999LearningPW,icarte2021the}. Hence theoretical advances on memoryless policy optimization are also of interest to finite-memory policy optimization. Theoretical aspects and optimization strategies over the class of memoryless policies have been studied in numerous works 
\citep[see, e.g.,][]{littman1994memoryless,  
singh1994learning, 
NIPS1994_1c1d4df5, 
10.5555/645527.657452, 
NIPS1998_1cd38823, 
baxter2000reinforcement, 
baxter2001infinite, 
LI2011556, 
azizzadenesheli2018policy}. 
However, finding exact or approximate optimal memoryless stochastic policies for POMDPs is still considered an open problem \citep{azizzadenesheli2016open}, which is NP-hard in general \citep{vlassis2012computational}. One reason for the difficulties in optimizing POMDPs is that, even in a tabular setting, the problem is non-convex and can exhibit suboptimal strict local optima \citep{bhandari2019global}. For memoryless policies the expected cumulative reward is a linear function of the corresponding (discounted) state-action frequencies. In the case of MDPs the feasible set of state-action frequencies is known to form a polytope, so that the optimization problem can be reduced to a linear program \citep{manne1960linear, de1960problemes, d1963probabilistic, hordijk1981linear}. On the other hand, to the best of our knowledge, for POMDPs the specific structure of the feasibility constraints and the optimization problem have not been studied, at least not in the same level of detail (see related works below). 

\paragraph{Related works}
In MDPs, the (discounted) state-action frequencies form a polytope resp.\ a compact convex set in the finite resp.\ countable state-action cases, whereby the extreme points are given by the state-action frequencies of deterministic stationary policies \citep{derman1970finite, altman1991markov}. Further, \citet{dadashi2019value} showed that the set of state value functions in finite state-action MDPs is a finite union of polytopes. The set of stationary state-action distributions of POMDPs has been studied by \cite{montufar2015geometry} highlighting a decomposition into infinitely many convex subsets whose dimensions depend on the degree of observability. Although this decomposition can be used to localize optimal policies to some extent, a description of the pieces in combination is still missing, needed to capture the properties of the optimization problem. We will obtain a detailed description in terms of finitely many polynomial constraints with closed form expressions and bound their degrees in terms of the observation mechanism. This yields a polynomial programming formulation of POMDPs generalizing the linear programming formulation of MDPs. This is  different to the formulation as a quadratically constrained  problem by~\cite{amato2006solving}, where the number and degree of constraints do not depend on the observability.   Finally, \cite{cohen2018linear}
described finite horizon POMDPs as a mixed integer linear program. 

\citet{grinberg2013average} showed that the expected mean reward is a rational function and obtained bounds on the degree of this function. We generalize this to the setting of discounted rewards and refine the result by relating the rational degree to the degree of observability. 
For both MDPs and POMDPs the expected cumulative reward is known to be a non-convex function of the policy even for tabular policy models \citep{bhandari2019global}. 
Nonetheless, for MDPs critical points can be shown to be global maxima under mild conditions. 
In contrast, for POMDPs or MDPs with linearly restricted policy models, it is known that non-global local optimizers can exist \citep{baxter2000reinforcement,poupart2011analyzing, bhandari2019global}. However, nothing is known about the number of local optimizers. 
We will present bounds on the number of critical points building on our computation of the rational degree and feasibility constraints. 

The structure of the expected cumulative reward has been studied in terms of the location of the global optimizers and the existence of local optimizers. 
Most notably, it is well known that in MDPs there always exist optimal policies which are memoryless and deterministic \citep[see][]{puterman2014markov}. 
In the case of POMDPs, optimal memoryless policies may need to be stochastic \citep[see][]{singh1994learning}.  \citet{montufar2015geometry, montufar2017geometry, 82841} obtained upper bounds on the number of actions that need to be randomized by these policies, which in the worst case is equal to the number of states that are compatible with the observation. 
Although we do not improve these results (which are indeed tight in some cases), our description of the expected cumulative reward function leads to a simpler proof of the bounds obtained by \cite{montufar2015geometry}. 

\citet{neyman2003real} considered stochastic games as semialgebraic problems showing that the minmax and maxmin payoffs in an \(n\)-player game are semialgebraic functions of the discount factor. 
{Although this is not directly related to our work, we take a similar philosophy.} We pursue a semialgebraic description of the feasible set of discounted state-action frequencies in POMDPs, which is closely related to the general spirit of semialgebraic statistics, where this is usually referred to as the implitization problem \citep{zwiernik2016semialgebraic}. Based on this we characterize the properties of the optimization problem by its algebraic degree, 
a concept that has been advanced in recent works on polynomial optimization  \citep{bajaj1988algebraic,doi:10.1137/080716670,CELIK2021855}. 

\paragraph{Contributions} 
We obtain results for infinite-horizon POMDPs with memoryless stochastic policies under the mean or discounted reward criteria which can be summarized as follows. 
\begin{enumerate}[leftmargin=*]
\item We show that the state-action frequencies and the expected cumulative reward can be written as fractions of determinantal polynomials in the entries of the stochastic policy matrix. We show that the degree of these polynomials is directly related to the degree of observability (see Theorem~\ref{theo:DegPOMDPs}). 
    \item We describe the set of feasible state-action frequencies as a basic semialgebraic set, i.e., as the solution set to a system of polynomial equations and inequalities, for which we also derive closed form expressions ({see Theorem \ref{theo:geometrydiscstadisc}} and Remark \ref{rem:defpoline}). 
    \item We reformulate the expected cumulative reward optimization problem as the optimization of a linear function subject to polynomial constraints ({see Remark~\ref{rem:polynomialProgram}}){, which we use to solve a navigation problem in a grid world (see Appendix~\ref{app:sec:plots})}. This is a POMDP generalization of the dual linear programming formulation of MDPs \citep{kallenberg1994survey, puterman2014markov}. 
    \item We present two methods for computing the number of critical points, which rely, respectively, on the rational degree of the expected cumulative reward function and the geometric description of the feasible set of state-action frequencies ({see Theorem \ref{cor:upperboundcriticalpoints}, Proposition \ref{prop:blind}} and Appendix~\ref{app:sec:optimization}). 
\end{enumerate}

\section{Preliminaries}\label{sec:preliminaries}

We denote the simplex of probability distributions over a finite set \(\mathcal X\) by \(\Delta_{\mathcal X}\). An element \(\mu\in\Delta_{\mathcal X}\) is a vector with non-negative entries \(\mu_x = \mu(x)\), $x\in\mathcal{X}$ adding to one. We denote the set of Markov kernels from a finite set $\mathcal X$ to another finite set $\mathcal{Y}$ by \(\Delta_{\mathcal Y}^{\mathcal X}\). An element \(Q\in \Delta_{\mathcal Y}^{\mathcal X}\) is a $|\mathcal{X}|\times |\mathcal{Y}|$ row stochastic matrix with entries \(Q_{xy} = Q(y|x)\), $x\in\mathcal{X}$, $y\in\mathcal{Y}$. Given \(Q^{(1)}\in\Delta_{\mathcal Y}^{\mathcal X}\) and \(Q^{(2)}\in\Delta_{\mathcal Z}^{\mathcal Y}\) we denote their composition into a kernel from $\mathcal{X}$ to $\mathcal{Z}$ by \(Q^{(2)}\circ Q^{(1)}\in\Delta_{\mathcal Z}^{\mathcal X}\). Given $p\in \Delta_{\mathcal{X}}$ and $Q\in\Delta^{\mathcal{X}}_{\mathcal{Y}}$ we denote their composition into a joint probability distribution by $p\ast Q \in \Delta_{\mathcal{X}\times\mathcal{Y}}$, $(p\ast Q)(x, y)\coloneqq p(x)Q(y|x)$. The support of $v\in \mathbb{R}^\mathcal{X}$ is the set $\operatorname{supp}(v) = \{x\in\mathcal{X}\colon v_x\neq0\}$.  

A \emph{partially observable Markov decision process} or shortly \emph{POMDP} is a tuple \((\mathcal S, \mathcal O, \mathcal A, \alpha, \beta, r)\). We assume that \(\mathcal S, \mathcal O\) and \(\mathcal A\) are finite sets which we call \emph{state}, \emph{observation} and \emph{action space} respectively. We fix a Markov kernel \(\alpha\in\Delta_{\mathcal S}^{\mathcal S\times\mathcal A}\) which we call \emph{transition mechanism} and a kernel \(\beta\in\Delta_{\mathcal O}^{\mathcal S}\) which we call \emph{observation mechanism}. Further, we consider an \emph{instantaneous reward vector} \(r\in\mathbb R^{\mathcal S\times\mathcal A}\). We call the system \emph{fully observable} if \(\beta=\operatorname{id}\)\footnote{More generally, the system is fully observable if the supports of \(\{\beta(\cdot|s)\}_{s\in\mathcal S}\) are disjoint subsets of $\mathcal{O}$.}, in which case the POMDP simplifies to a \emph{Markov decision process} or shortly \emph{MDP}. 

As \emph{policies} we consider elements \(\pi\in\Delta_{\mathcal A}^{\mathcal O}\) and call the Markov kernel \(\tau = \pi\circ\beta\in\Delta_{\mathcal A}^{\mathcal S}\) its corresponding \emph{effective policy}. A policy induces transition kernels \(P_\pi\in\Delta_{\mathcal S\times\mathcal A}^{\mathcal S\times\mathcal A}\) and \(p_\pi\in\Delta_{\mathcal S}^{\mathcal S}\) by
    \[P_\pi(s^\prime, a^\prime|s, a) \coloneqq \alpha(s^\prime|s, a) (\pi\circ\beta)(a^\prime|s^\prime) \quad \text{and} \quad p_\pi(s^\prime|s) \coloneqq \sum_{a\in\mathcal A} (\pi\circ \beta)(a|s) \alpha(s^\prime|s, a) .\]
For any initial state distribution \(\mu\in\Delta_{\mathcal S}\), a policy \(\pi\in\Delta_{\mathcal A}^{\mathcal O}\) defines a Markov process on \(\mathcal S\times \mathcal A\) with transition kernel \(P_\pi\) which we denote by \(\mathbb P^{\pi, \mu}\). For a \emph{discount rate} \(\gamma\in(0, 1)\) and $\gamma=1$ we define
\begin{align*}
    R_\gamma^\mu(\pi) & \coloneqq \mathbb E_{\mathbb P^{\pi, \mu}}\bigg[ (1-\gamma) \sum_{t=0}^\infty\gamma^tr(s_t, a_t)\bigg] \quad\text{and}\quad
    R^\mu_1(\pi) \coloneqq \lim_{T\to\infty} \mathbb E_{\mathbb P^{\pi, \mu}}\bigg[\frac1T\sum_{t=0}^{T-1}r(s_t, a_t) \bigg], 
    \end{align*}
called the \emph{expected discounted reward} and the \emph{expected mean reward}, respectively. {The goal is to maximize this function over the policy polytope $\Delta_\mathcal A^\mathcal O$.} For a policy \(\pi\) we define the \emph{value function} \(V_{\gamma}^\pi\in\mathbb R^\mathcal S\) via \(V_{\gamma}^\pi(s)\coloneqq R_{\gamma}^{\delta_s}(\pi)\), $s\in\mathcal{S}$, where \(\delta_s\) is the Dirac distribution concentrated at \(s\). A short calculation  shows that 
    $R_\gamma^\mu(\pi) = \sum_{s, a} r(s, a) \eta_\gamma^{\pi, \mu}(s, a) = \langle r, \eta_\gamma^{\pi, \mu}\rangle_{\mathcal S\times\mathcal A}$ \citep{zahavy2021reward},
where\vspace{-.2cm}
\begin{equation}
\eta_\gamma^{\pi, \mu}(s, a) \coloneqq \begin{cases} (1-\gamma)\sum_{t=0}^\infty \gamma^t \mathbb P^{\pi, \mu}(s_t = s, a_t = a), & \text{if } \gamma\in(0, 1) \\ \lim_{T\to\infty} \frac1T \sum_{t = 0}^{T-1} \mathbb P^{\pi, \mu}(s_t=s, a_t=a), &\text{if } \gamma=1. \end{cases} 
\end{equation}
Here, $\eta_\gamma^{\pi, \mu}$ is an element of $\Delta_{\mathcal{S}\times \mathcal{A}}$ called \emph{expected (discounted) state-action frequency} \citep{derman1970finite}, {(discounted) visitation/occupancy measure} or on-policy distribution \citep{sutton2018reinforcement}. Denoting the state marginal of \(\eta_\gamma^{\pi, \mu}\) by \(\rho_\gamma^{\pi, \mu}\in\Delta_\mathcal S\) we have $\eta^{\pi, \mu}_\gamma(s, a) = \rho^{\pi, \mu}_\gamma(s) (\pi\circ\beta)(a|s)$. We recall the following well-known facts. 
\begin{restatable}[Existence of state-action frequencies and rewards]{prop}{proppropdis}\label{prop:propdis}
Let \((\mathcal S, \mathcal O, \mathcal A, \alpha, \beta, r)\) be a POMDP, \(\gamma\in(0, 1]\) and \(\mu\in\Delta_\mathcal S\). Then $\eta^{\pi, \mu}_\gamma, \rho^{\pi, \mu}_\gamma$ and $R_\gamma^\mu(\pi)$ exist for every $\pi\in\Delta_\mathcal A^\mathcal O$ and $\mu\in\Delta_\mathcal S$ and are continuous in $\gamma\in(0, 1]$ for fixed $\pi$ and $\mu$.
\end{restatable}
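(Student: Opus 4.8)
The plan is to reduce all three existence claims to standard facts about the powers of a single finite stochastic matrix, and then to treat the two regimes \(\gamma\in(0,1)\) and \(\gamma=1\) separately, with the continuity at the boundary \(\gamma=1\) being the crux. First I would observe that under \(\mathbb P^{\pi,\mu}\) the pair process \((s_t,a_t)\) is a time-homogeneous Markov chain on the finite set \(\mathcal S\times\mathcal A\) with transition kernel \(P_\pi\) and initial distribution \(\nu_0\coloneqq\mu\ast\tau_\pi\in\Delta_{\mathcal S\times\mathcal A}\), so that
\[\mathbb P^{\pi,\mu}(s_t=s,\,a_t=a) = (\nu_0 P_\pi^t)(s,a).\]
Since \(R_\gamma^\mu(\pi)=\langle r,\eta_\gamma^{\pi,\mu}\rangle_{\mathcal S\times\mathcal A}\) is linear in \(\eta_\gamma^{\pi,\mu}\), and \(\rho_\gamma^{\pi,\mu}\) is the marginal of \(\eta_\gamma^{\pi,\mu}\) over \(\mathcal A\), it suffices to establish existence and continuity for \(\eta_\gamma^{\pi,\mu}\); the statements for \(\rho_\gamma^{\pi,\mu}\) and \(R_\gamma^\mu(\pi)\) then follow by marginalization and by linearity of \(\eta\mapsto\langle r,\eta\rangle_{\mathcal S\times\mathcal A}\).

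For \(\gamma\in(0,1)\) I would use that \(P_\pi\) is row-stochastic and hence has spectral radius \(1\), so that \(\gamma P_\pi\) has spectral radius \(\gamma<1\) and the Neumann series converges, yielding the closed form
\[\eta_\gamma^{\pi,\mu} = (1-\gamma)\sum_{t=0}^\infty \gamma^t\,\nu_0 P_\pi^t = (1-\gamma)\,\nu_0\,(I-\gamma P_\pi)^{-1}.\]
Existence is then immediate, and membership in \(\Delta_{\mathcal S\times\mathcal A}\) follows since each \(\nu_0 P_\pi^t\) is a probability vector and \((1-\gamma)\sum_{t\ge0}\gamma^t=1\). Continuity of \(\gamma\mapsto\eta_\gamma^{\pi,\mu}\) on \((0,1)\) is clear from this expression: by Cramer's rule the entries of \((I-\gamma P_\pi)^{-1}\) are rational functions of \(\gamma\) whose common denominator \(\det(I-\gamma P_\pi)\) does not vanish on \((0,1)\).

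For \(\gamma=1\) the relevant object is a Ces\`aro limit, and existence rests on the ergodic theorem for finite Markov chains: for any stochastic matrix \(P_\pi\) the averages \(P_\pi^\ast\coloneqq\lim_{T\to\infty}\frac1T\sum_{t=0}^{T-1}P_\pi^t\) converge, the limit being the spectral projection onto the eigenvalue-\(1\) eigenspace, with convergence holding even in the periodic case. Hence \(\eta_1^{\pi,\mu}=\nu_0 P_\pi^\ast\) exists and again lies in \(\Delta_{\mathcal S\times\mathcal A}\), since \(P_\pi^\ast\) is stochastic.

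The main obstacle is continuity at the endpoint, i.e.\ showing \(\lim_{\gamma\to1^-}\eta_\gamma^{\pi,\mu}=\eta_1^{\pi,\mu}\), which bridges the discounted and average regimes. Here I would invoke the Abelian relation between the two: writing \(Q\coloneqq P_\pi-P_\pi^\ast\) and using \(P_\pi^\ast Q=QP_\pi^\ast=0\) together with the fact that \(Q\) has spectral radius strictly below \(1\), one gets \((I-\gamma P_\pi)^{-1}=(1-\gamma)^{-1}P_\pi^\ast + H_\gamma\) with \(H_\gamma\) bounded near \(\gamma=1\), whence \((1-\gamma)(I-\gamma P_\pi)^{-1}\to P_\pi^\ast\) as \(\gamma\to1^-\). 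Multiplying by \(\nu_0\) gives \(\eta_\gamma^{\pi,\mu}\to\nu_0 P_\pi^\ast=\eta_1^{\pi,\mu}\). Equivalently, one can argue coordinatewise: each sequence \(t\mapsto(\nu_0 P_\pi^t)(s,a)\) is bounded and its Ces\`aro mean converges, so by the Abelian (Frobenius) theorem its Abel mean converges to the same limit. Either route closes the gap between the two regimes and completes the proof.
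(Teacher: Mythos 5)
Your proposal follows essentially the same route as the paper's proof: for \(\gamma\in(0,1)\) the Neumann series gives \(\eta_\gamma^{\pi,\mu}=(1-\gamma)\,\nu_0(I-\gamma P_\pi)^{-1}\) (the Abel mean), for \(\gamma=1\) existence rests on the unconditional convergence of Ces\`aro averages of powers of a finite stochastic matrix, and continuity at the endpoint is the Abel--Ces\`aro relation, which the paper invokes as the ``Tauberian theorem'' with references to Gillette and Hunter. Your reduction of \(\rho_\gamma^{\pi,\mu}\) and \(R_\gamma^\mu\) to \(\eta_\gamma^{\pi,\mu}\) by marginalization and linearity likewise mirrors the paper's closing ``analogue argument''.

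One justification in your endpoint argument is false as stated: \(Q\coloneqq P_\pi-P_\pi^\ast\) does \emph{not} in general have spectral radius strictly below \(1\). For the deterministic two-cycle \(P_\pi=\bigl(\begin{smallmatrix}0&1\\1&0\end{smallmatrix}\bigr)\) one has \(P_\pi^\ast=\tfrac12\bigl(\begin{smallmatrix}1&1\\1&1\end{smallmatrix}\bigr)\) and \(Q=\tfrac12\bigl(\begin{smallmatrix}-1&1\\1&-1\end{smallmatrix}\bigr)\), whose eigenvalues are \(0\) and \(-1\); strict contraction of \(Q\) holds only for aperiodic chains, and no aperiodicity (nor Assumption~\ref{as:ergodicity}, which is not in force for this proposition) is available here. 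What is true, and suffices, is weaker: since the eigenvalue \(1\) of a stochastic matrix is semisimple, the spectrum of \(P_\pi\) restricted to the complementary invariant subspace \(\operatorname{ker}(P_\pi^\ast)\) avoids \(1\) (though it may meet the unit circle elsewhere), so \((I-\gamma P_\pi)^{-1}\) restricted to that subspace converges as \(\gamma\to1^-\) and your \(H_\gamma\) indeed stays bounded. Alternatively, your second, coordinatewise route --- a bounded sequence that is Ces\`aro summable is Abel summable to the same limit (Frobenius) --- is correct exactly as stated and is in substance the fact the paper cites; since you offer the two routes as equivalent alternatives, your proof closes through that one, but the spectral-radius claim in the first route should be repaired or deleted.
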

For $\gamma=1$ we work under the following standard assumption in the (PO)MDP literature\footnote{Assumption~\ref{as:ergodicity} is weaker than ergodicity, for which well known criteria exist. For $\gamma<1$ the assumption is not required, since the discounted stationary distributions are always unique.}.
\begin{restatable}[Uniqueness of stationary disitributions]{ass}{asergodicity}
\label{as:ergodicity}
If \(\gamma=1\), we assume that for any policy \(\pi\in\Delta^{\mathcal O}_{\mathcal A}\) there exists a unique stationary distribution $\eta\in\Delta_{\mathcal S\times\mathcal A}$ of \(P_\pi\).
\end{restatable}
The following proposition shows in particular that for any initial distribution \(\mu\), the infinite time horizon state-action frequency $\eta^{\pi, \mu}_\gamma$ is the unique discounted stationary distribution of $P_\pi$. 

\begin{restatable}[State-action frequencies are discounted stationary]{prop}{propbellmanflow}\label{prop:bellmanflow}
Let \((\mathcal S, \mathcal O, \mathcal A, \alpha, \beta, r)\) be a POMDP, \(\gamma\in(0, 1]\) and \(\mu\in\Delta_\mathcal S\). 
Then \(\eta_{\gamma}^{\pi, \mu}\) is the unique element in \(\Delta_{\mathcal S\times\mathcal A}\) satisfying the discounted stationarity equation 
   $\eta_\gamma^{\pi, \mu} = \gamma P_\pi^T\eta_\gamma^{\pi, \mu} + (1-\gamma) (\mu\ast (\pi\circ\beta))$.
Further, \(\rho^{\pi, \mu}_\gamma\) is the unique element in \(\Delta_\mathcal S\) satisfying \(\rho_\gamma^{\pi, \mu} = \gamma p_\pi^T\rho_\gamma^{\pi, \mu} + (1-\gamma) \mu\).
\end{restatable}
We denote the set of all state-action frequencies in the fully and in the partially observable case by
    \[ \mathcal N_{\gamma}^{\mu} \coloneqq \left\{ \eta^{\pi, \mu}_{\gamma}\in\Delta_{\mathcal S\times\mathcal A} \mid \pi\in\Delta_{\mathcal A}^{\mathcal S} \right\} \quad \text{and}\quad \mathcal N_{\gamma}^{\mu, \beta} \coloneqq \left\{ \eta^{\pi, \mu}_{\gamma}\in\Delta_{\mathcal S\times\mathcal A} \mid \pi\in\Delta_{\mathcal A}^{\mathcal O} \right\}. \]
We have seen that the expected cumulative reward function \(R_\gamma^\mu\colon\Delta_\mathcal A^\mathcal O\to\mathbb R\) factorises according to 
    \[ \Delta_\mathcal A^\mathcal O\xrightarrow{f_\beta}\Delta_\mathcal A^\mathcal S\xrightarrow{\Psi_\gamma^\mu}\mathcal N_\gamma^{\mu,\beta}\to\mathbb R, \quad \pi\mapsto \pi\circ\beta\mapsto\eta^{\pi, \mu}_\gamma\mapsto \langle r, \eta_\gamma^{\pi, \mu}\rangle_{\mathcal S\times\mathcal A}.\]
This is illustrated in Figure~\ref{fig:range4}. 
We make use of this decomposition in two different ways. 
First, in Section \ref{sec:ratDegree} we study the algebraic properties of the parametrization $\Psi_\gamma^\mu$ of the set of state-action frequencies $\mathcal N_\gamma^{\mu, \beta}$. 
In Section \ref{sec:disc-stat-distri} we derive a description of $\mathcal N_\gamma^{\mu, \beta}$ via polynomial inequalities. 

\begin{figure}[h!]
    \centering
    \begin{tikzpicture}
    \node at (0,0){ 
    \begin{tikzpicture}[scale=2]
        \tikzmath{\overhead=0.3;}
        \draw[step=1,dashed,gray,very thin] (-\overhead+0.1,-\overhead+0.1) grid (1+\overhead-0.1,1+\overhead-0.1);
        \fill[blue!40!white] (0,0) rectangle (1, 1);
        \draw (0,0) rectangle (1,1);
        \draw [pil] (0,-\overhead) -- (0,1+\overhead);
        \draw [pil] (-\overhead,0) -- (1+\overhead, 0);
        \draw (-0.15,-0.15) -- (-0.15,-0.15) node[] {\tiny $0$};
        \draw (1.15,-0.15) -- (1.15,-0.15) node[] {\tiny $1$};
        \draw (-0.15,1.15) -- (-0.15,1.15) node[] {\tiny $1$};
        \node at (.5,.5) {$\Delta^\mathcal{O}_{\mathcal{A}}$}; 
        \fill (.75,.75) circle (.5pt) node[right] {$\pi$};
    \end{tikzpicture}
    };
    \node at (4,0){ 
    \begin{tikzpicture}[scale=2]
        \tikzmath{\overhead=0.3;}
        \draw[step=1,dashed,gray,very thin] (-\overhead+0.1,-\overhead+0.1) grid (1+\overhead-0.1,1+\overhead-0.1);
        \fill[yellow!60!white] (0,0) rectangle (1, 1);
        \draw (0,0) rectangle (1,1);
        \draw [pil] (0,-\overhead) -- (0,1+\overhead);
        \draw [pil] (-\overhead,0) -- (1+\overhead, 0);
        \draw (-0.15,-0.15) -- (-0.15,-0.15) node[] {\tiny $0$};
        \draw (1.15,-0.15) -- (1.15,-0.15) node[] {\tiny $1$};
        \draw (-0.15,1.15) -- (-0.15,1.15) node[] {\tiny $1$};
        \coordinate (A1) at (0,0);
        \coordinate (A2) at (1,1*0.5);
        \coordinate (A3) at (1,1);
        \coordinate (A4) at (0,1*0.5);
        \filldraw[fill=blue!40!white,fill opacity=1] (A1)--(A2)--(A3)--(A4)--cycle;
        \node at (.2,.85) {$\Delta^\mathcal{S}_{\mathcal{A}}$};
        \node at (.5,.5) {$\Delta_{\mathcal A}^{\mathcal S, \beta}$}; 
        \fill (.75,.75) circle (.5pt) node[right] {$\tau_\pi$};
    \end{tikzpicture}
    }; 
    \node at (8,0) { 
    \begin{tikzpicture}[scale=2]
        \node at (0,0) {\includegraphics[width=1.65in]{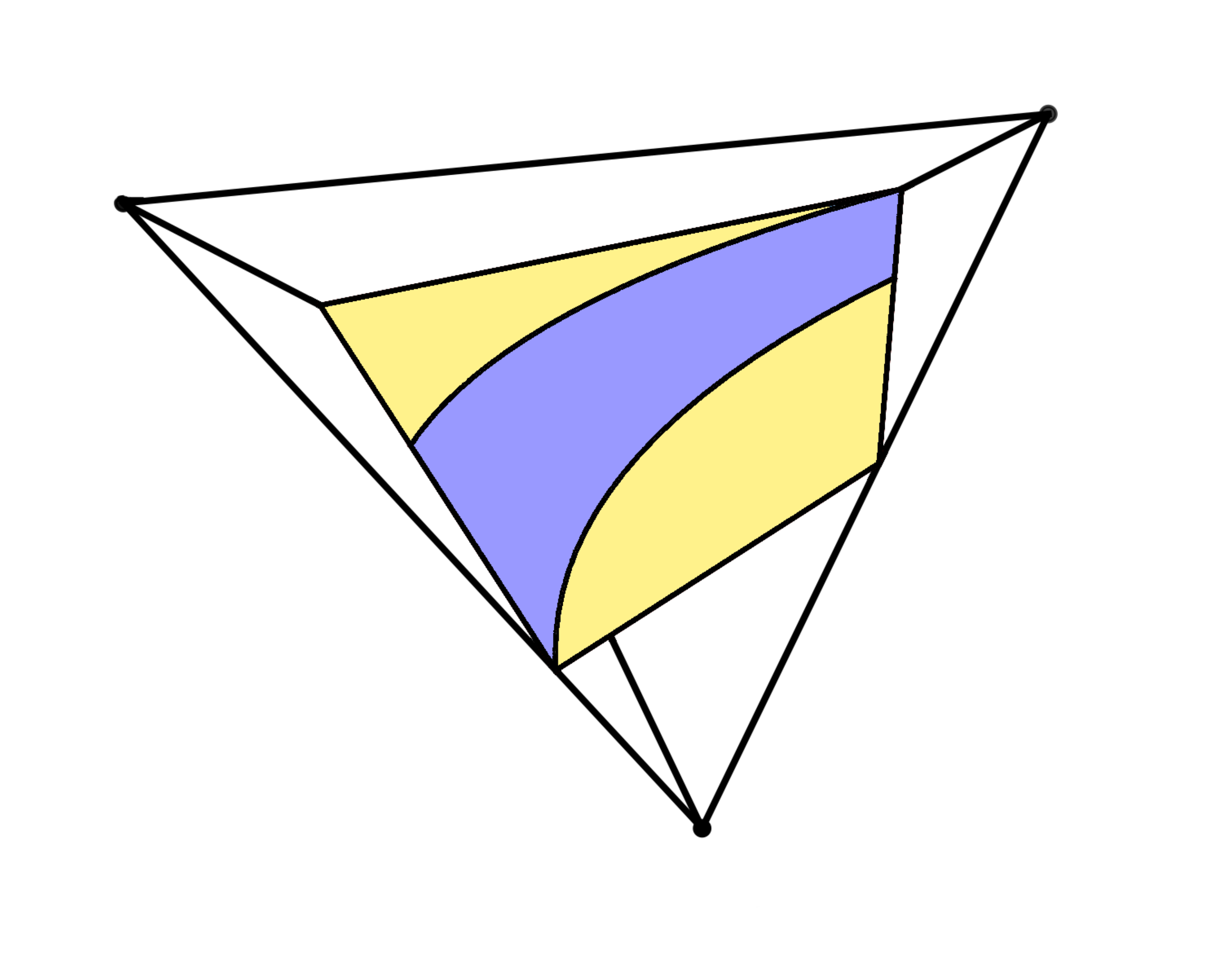}};
        \node at (1,.3) {$\Delta_{\mathcal{S}\times\mathcal{A}}$};
        \coordinate (Nb) at (-.15,0);
        \node[inner sep=0, outer sep=.2mm] (Nbt) at (-.5,-.5) {$\mathcal{N}^{\mu,\beta}_\gamma$};
        \draw[thin] (Nb) -- (Nbt);
        \coordinate (N) at (.15,0);
        \node[inner sep=0, outer sep=.2mm] (Nt) at (.5,-.5) {$\mathcal{N}^{\mu}_\gamma$};
        \draw[thin] (N) -- (Nt);
        \node[inner sep=0, outer sep=.2mm] (eta) at (.3,.375) {}; 
        \fill (.3,.375) circle (.4pt);
        \node[inner sep=0, outer sep=.2mm] (etat) at (.85,-.125) {$\eta_\gamma^{\pi,\mu}$}; 
        \draw[thin] (eta) -- (etat);
    \end{tikzpicture}
    };
    \node at (2,0) {$\xrightarrow[\text{linear}]{f_\beta}$}; 
    \node at (6,0) {$\xrightarrow[\text{rational}]{\Psi_\gamma^\mu}$};
    \node at (0,1.75) {\small Observation policies};         
    \node at (4,1.75) {\small State policies}; 
    \node at (8,1.75) {\small State-action frequencies\phantom{p}}; 
    \end{tikzpicture}\\
    \vspace{-.25cm}
    \begin{tikzpicture}
    \node at (0,-.5) {\includegraphics[width=1.65in,clip=true, trim=0cm 0cm 0cm 1cm]{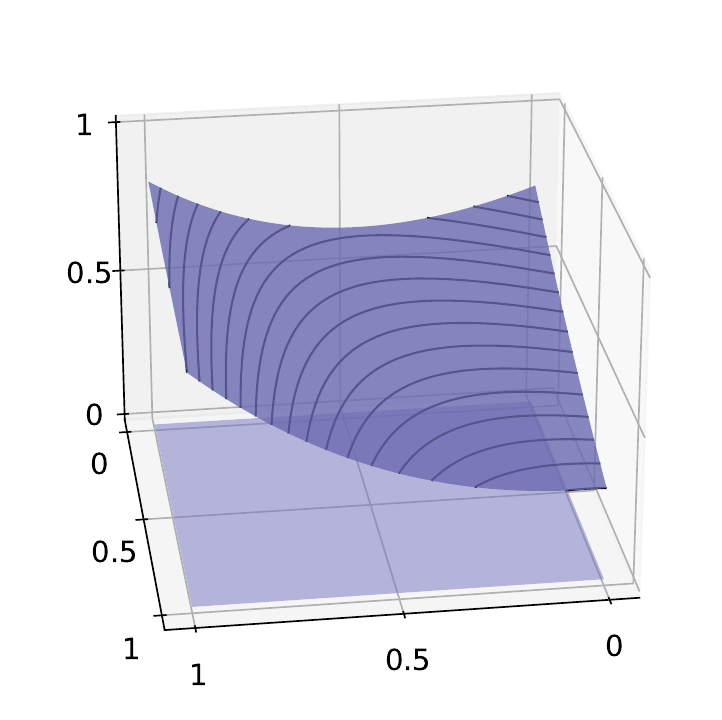}}; 
    \node at (4,-.5) {\includegraphics[width=1.65in,clip=true, trim=0cm 0cm 0cm 1cm]{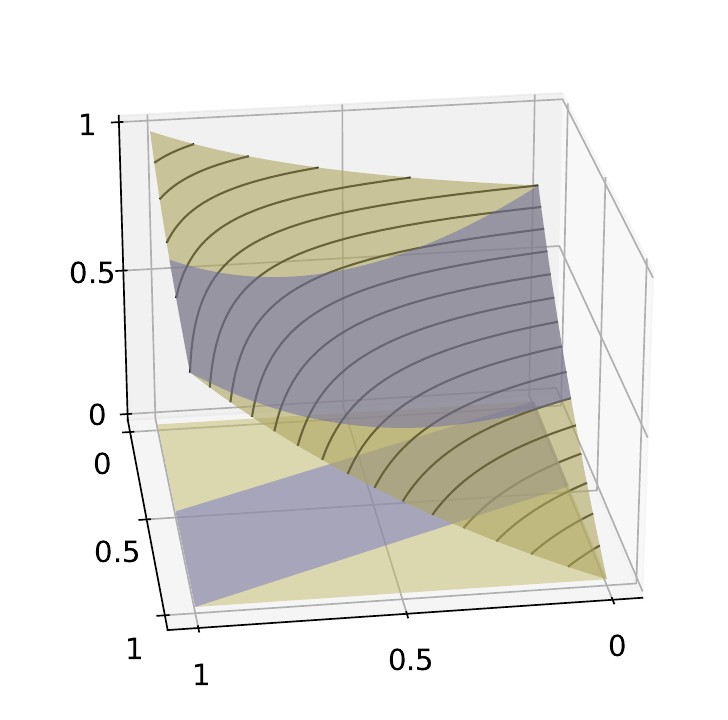}};  
    \node at (8,-.5) {\includegraphics[width=1.65in,clip=true, trim=0cm 0cm 0cm 1cm]{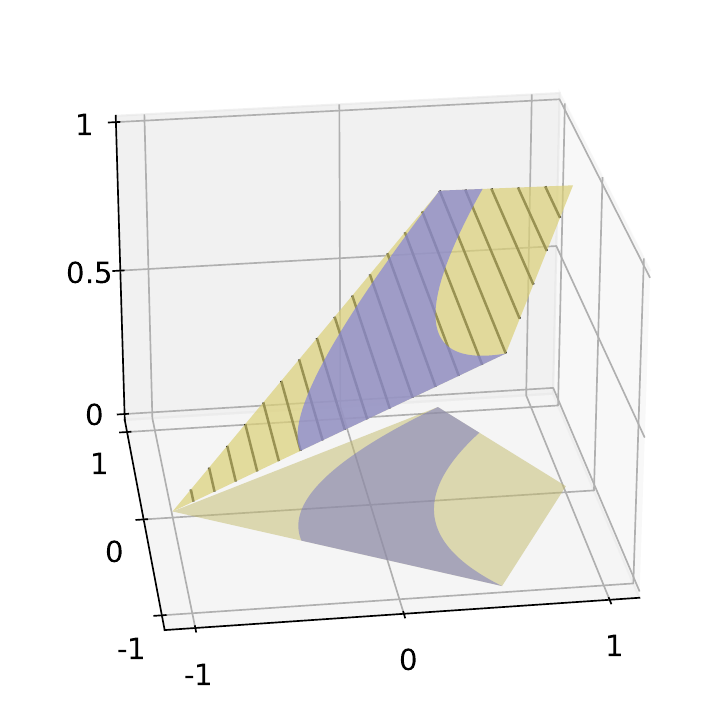}}; 
    \node at (0,1.15) {\small $\begin{array}{c}\pi\mapsto R \\[-.3em]\text{\scriptsize rational}\end{array}$}; 
    \node at (4,1.15) {\small $\begin{array}{c}\tau\mapsto R \\[-.3em]\text{\scriptsize rational}\end{array}$};  
    \node at (8,1.15) {\small $\begin{array}{c}\eta\mapsto R \\[-.3em]\text{\scriptsize linear}\end{array}$}; 
    \end{tikzpicture}
    \vspace{-.5cm}
    \caption{
    Geometry of a POMDP with two states, two actions and two observations. 
    The top shows the observation policy polytope $\Delta_\mathcal A^\mathcal O$; the associated state policy polytope $\Delta_\mathcal A^\mathcal S$ (yellow) along with its subset of effective policies $\Delta_{\mathcal A}^{\mathcal S, \beta}$ (blue); and the corresponding sets of discounted state-action frequencies in the simplex $\Delta_{\mathcal S\times\mathcal A}$ (a tetrahedron in this case). The bottom shows the graph of the expected cumulative discounted reward $R$ as a function of the observation policy $\pi$; the state policy $\tau$; and the discounted state-action frequencies $\eta$. We characterize the parametrization and geometry of these domains and the structure of the expected cumulative reward function. 
     }
    \label{fig:range4}
\end{figure}

\section{The Parametrization of Discounted State-Action Frequencies}\label{sec:ratDegree}

In this section we show that the discounted state-action frequencies, the value function and the expected cumulative reward of POMDPs are rational functions and relate their rational degree, which can be interpreted as a measure of their complexity, to the degree of observability. Here, we say that a function is a rational function of degree at most \(k\) if it is the fraction of two polynomials of degree at most \(k\). 
By Cramer's rule {(see Appendix \ref{app:sec:2.2})}, it holds that
    \[ \eta_\gamma^{\pi, \mu}(s, a) =  (\pi\circ\beta)(a|s)\rho^{\pi, \mu}_\gamma(s) = (\pi\circ\beta)(a|s) \cdot(1-\gamma) \cdot \frac{\det(I-\gamma p_\pi^T)_s^\mu}{\det(I-\gamma p_\pi^T )}, \]
where $(I-\gamma p_\pi^T)_s^\mu$ denotes the matrix obtained by replacing the \(s\)-row of $I-\gamma p_\pi^T$ with $\mu$. {Since $p_\pi$ depends linearly on $\pi$ and the determinant is a polynomial, this is a rational function in the entries of $\pi$.} For the degree, we show the following result in Appendix \ref{app:sec:2.1}. 

\begin{restatable}[Degree of POMDPs]{theo}{theoDegPOMDPs}
\label{theo:DegPOMDPs}
Let \((\mathcal S, \mathcal O, \mathcal A, \alpha, \beta, r)\) be a POMDP, \(\mu\in\Delta_\mathcal S\) be an initial distribution and \(\gamma\in(0, 1)\) a discount factor. The state-action frequencies $\eta^{\pi, \mu}_\gamma$ and $\rho^{\pi, \mu}_\gamma$, the value function $V^\pi_\gamma$ and the expected cumulative reward $R_\gamma^\mu(\pi)$ are rational functions with common denominator in the entries of the policy $\pi$. Further, if they are restricted to the subset \(\Pi\subseteq\Delta_\mathcal A^\mathcal O\) of policies which agree with a fixed policy \(\pi_0\) on all states outside of \(O\subseteq\mathcal O\), they have degree at most 
    \[ \left\lvert \big\{ s\in\mathcal S \mid \beta(o|s)>0 \text{ for some } o\in O \big\} \right\rvert. \]
\end{restatable}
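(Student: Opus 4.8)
The plan is to work entirely with the \(|\mathcal S|\)-dimensional representation coming from Proposition~\ref{prop:bellmanflow} together with the factorization \(\eta_\gamma^{\pi,\mu}(s,a)=(\pi\circ\beta)(a|s)\,\rho_\gamma^{\pi,\mu}(s)\), so that a single determinant controls all four quantities. Since \(\rho_\gamma^{\pi,\mu}\) is the unique solution of the linear system \((I-\gamma p_\pi^T)\rho=(1-\gamma)\mu\), Cramer's rule writes each coordinate \(\rho_\gamma^{\pi,\mu}(s)\) as \((1-\gamma)\) times the ratio of \(\det(I-\gamma p_\pi^T)\) with its \(s\)-th column (the one carrying the dependence of \(p_\pi\) on the policy at \(s\)) replaced by \(\mu\), over \(\det(I-\gamma p_\pi^T)\); this is the representation recorded before the theorem. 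Because \(p_\pi\) is linear in \(\pi\) and the determinant is polynomial, this is manifestly rational, and the denominator \(\det(I-\gamma p_\pi^T)\) does not depend on \(\mu\). As \(R_\gamma^\mu(\pi)=\langle r,\eta_\gamma^{\pi,\mu}\rangle\) and \(V_\gamma^\pi(s)=\langle r,\eta_\gamma^{\pi,\delta_s}\rangle\) are \(\mathbb R\)-linear combinations of the entries \(\eta_\gamma^{\pi,\mu}(s,a)\), all four objects are rational with the common denominator \(\det(I-\gamma p_\pi^T)\), which settles the first assertion without any passage through the larger transition matrix \(P_\pi\).

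For the degree bound I would isolate the single structural fact that drives everything. For \(\pi\in\Pi\) and a state \(s\), the row \(p_\pi(\cdot|s)=\sum_a\big(\sum_o\beta(o|s)\pi(a|o)\big)\alpha(\cdot|s,a)\) depends on the free variables \(\{\pi(a|o):o\in O\}\) precisely when \(\beta(o|s)>0\) for some \(o\in O\), that is, when \(s\in S_O\coloneqq\{s\in\mathcal S:\beta(o|s)>0\text{ for some }o\in O\}\), and then only affinely. Consequently \(I-\gamma p_\pi^T\) has non-constant entries only in the columns indexed by \(S_O\), each such column being an affine function of the free variables. Writing every variable column as its value at \(\pi_0\) plus a linear correction and expanding the determinant by multilinearity in the columns then shows \(\deg\det(I-\gamma p_\pi^T)\le|S_O|\): in each term of the expansion the \(\pi\)-degree equals the number of variable columns from which the linear part is taken, which is at most \(|S_O|\).

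The same expansion applied to the numerator requires one further observation, which is the heart of the matter. Replacing the \(s\)-th column of \(I-\gamma p_\pi^T\) by the constant vector \(\mu\) turns a variable column into a constant one exactly when \(s\in S_O\); hence the numerator of \(\rho_\gamma^{\pi,\mu}(s)\) has degree at most \(|S_O|-\mathbf{1}_{\{s\in S_O\}}\). On the other hand the prefactor \((\pi\circ\beta)(a|s)=\sum_o\beta(o|s)\pi(a|o)\) has degree exactly \(\mathbf{1}_{\{s\in S_O\}}\). Multiplying, the numerator of \(\eta_\gamma^{\pi,\mu}(s,a)=(\pi\circ\beta)(a|s)\rho_\gamma^{\pi,\mu}(s)\) has degree at most \((|S_O|-\mathbf{1}_{\{s\in S_O\}})+\mathbf{1}_{\{s\in S_O\}}=|S_O|\); together with the denominator bound this gives \(\deg\eta_\gamma^{\pi,\mu}\le|S_O|\), and by linearity the same bound for \(R_\gamma^\mu\) and \(V_\gamma^\pi\).

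The step I expect to be the main obstacle is making this bookkeeping rigorous rather than heuristic, and in particular verifying the exact cancellation in the previous paragraph. One must justify the multilinear column expansion carefully (each affine column splits into a constant and a linear part, and the degree of a term equals the number of columns where the linear part is selected), and then confirm the two sharp facts that the column replacement \emph{lowers} the numerator degree by \(\mathbf{1}_{\{s\in S_O\}}\) while the prefactor \emph{raises} it by the same amount, so that the two effects exactly offset and no entry of \(\eta_\gamma^{\pi,\mu}\) exceeds degree \(|S_O|\). It is worth noting that this compensation is what keeps the bound at \(|S_O|\) rather than \(|S_O|+1\); a naive factor-by-factor estimate of \((\pi\circ\beta)(a|s)\) times \(\rho_\gamma^{\pi,\mu}(s)\), or the use of the \(\mathcal S\times\mathcal A\)-system in which every column of \(I-\gamma P_\pi^T\) carries policy dependence, would both give strictly weaker bounds, so the reduction to the \(|\mathcal S|\)-dimensional determinant and the precise placement of the variable column are essential.
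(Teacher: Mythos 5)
Your proof is correct, and its skeleton is the same as the paper's: Cramer's rule applied to the $\lvert\mathcal S\rvert$-dimensional system $(I-\gamma p_\pi^T)\rho=(1-\gamma)\mu$, the observation that only the columns indexed by $S_O$ depend on the free policy entries (and only affinely), the exact compensation between the replaced column and the prefactor $(\pi\circ\beta)(a|s)$, and finally linearity to pass to $R^\mu_\gamma$ and $V^\pi_\gamma$ with the common denominator $\det(I-\gamma p_\pi^T)$. The one genuine difference is how the determinant degree bound is justified. The paper develops a small theory of determinantal polynomials (Propositions~\ref{prop:degreeSimDetPol}--\ref{prop:degreemultdetpol}), bounding $\deg\det\bigl(A_0+\sum_i x_iA_i\bigr)$ by $\max_x\operatorname{rank}(B(x))$ via restriction to lines and eigenvalue counting, and then applies this with $B=-\gamma(p_\pi^T-p_{\pi_0}^T)$, resp.\ its column-replaced version, whose ranks are bounded by the number of nonzero columns, i.e.\ by $\lvert S_O\rvert$, resp.\ $\lvert S_O\rvert-\mathds{1}_{S_O}(s)$. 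Your multilinear column expansion proves the same bounds more directly and is rigorous as stated: writing each column as constant plus linear part, the expansion is a sum over subsets $T$ of columns of terms homogeneous of degree $\lvert T\rvert$, and only $T\subseteq S_O$ contributes. What you give up is that your argument only sees the \emph{count} of variable columns, whereas the paper's rank formulation yields the sharper refinements recorded in Theorem~\ref{app:prop:DegDisStaDis} (bounds via $\operatorname{rank}(p_\pi^T-p_{\pi_0}^T)$, which can be strictly smaller than $\lvert S_O\rvert$) and the exact degree expressions of Remark~\ref{app:rem:exactdegree}. One small point in your favor: you correctly identify that Cramer's rule replaces the $s$-th \emph{column} of $I-\gamma p_\pi^T$, the one carrying the dependence on the policy at $s$; the paper's text says ``row,'' but its own rank computation (counting the nonzero columns of $(p_\pi^T-p_{\pi_0}^T)^0_s$) is only consistent with the column convention, so your reading is the intended one.
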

Hence, the number of states that are compatible with $o$ determines the algebraic complexity of the discounted state-action frequencies, the value function and the reward function. Various refinements of the theorem are presented in Appendix{~\ref{app:sec:2.1}}. For the mean reward case and under an ergodicity assumption, \citet{grinberg2013average} showed that the stationary distributions are a rational function of degree of most $\lvert\mathcal S\rvert$ of the policy. From Theorem~\ref{theo:DegPOMDPs} we can derive multiple implications (see also Appendix~\ref{app:subsubsec:superlevelsetsPOMDPs} for implications on the optimization landscape): 

\begin{corollary}[Feasible state-action frequencies and value functions form semialgebraic sets]\label{cor:semialg}
Consider a POMDP \((\mathcal S, \mathcal O, \mathcal A, \alpha, \beta, r)\) and let \(\mu\in\Delta_\mathcal S\) be an initial distribution and \(\gamma\in(0, 1)\) a discount factor. The set of discounted state-action frequencies and the set of value functions are semialgebraic sets\footnote{A semialgebraic set is a set defined by a number of polynomial inequalities or a finite union of such sets; for details see Appendix \ref{sec:app:1.2}.}. 
\end{corollary}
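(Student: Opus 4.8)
The plan is to realize each of the two sets as the image of a polytope under a rational map and then invoke the Tarski--Seidenberg theorem, which guarantees that the image of a semialgebraic set under a semialgebraic map (in particular, a rational map defined everywhere on the domain) is again semialgebraic; equivalently, semialgebraic sets are closed under coordinate projection. The heavy lifting is already supplied by Theorem~\ref{theo:DegPOMDPs}, so the argument is essentially a packaging step.

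First I would record that the domain $\Delta_\mathcal{A}^\mathcal{O}$ is a product of simplices, hence a polytope cut out by the linear constraints $\pi(a|o)\ge 0$ and $\sum_{a}\pi(a|o)=1$; in particular it is a basic closed semialgebraic set. By Theorem~\ref{theo:DegPOMDPs} the coordinate maps $\pi\mapsto\eta_\gamma^{\pi,\mu}(s,a)$ are rational with a common denominator, namely $Q(\pi)\coloneqq\det(I-\gamma p_\pi^T)$, so that $\eta_\gamma^{\pi,\mu}(s,a)=P_{s,a}(\pi)/Q(\pi)$ for polynomials $P_{s,a}$, and likewise $V_\gamma^\pi(s)=\widetilde P_s(\pi)/Q(\pi)$ over the same denominator. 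Before forming any graph I would check that $Q$ does not vanish on $\Delta_\mathcal{A}^\mathcal{O}$: since $p_\pi$ is row-stochastic its spectral radius equals $1$, so for $\gamma\in(0,1)$ the matrix $I-\gamma p_\pi^T$ is invertible and $Q(\pi)\neq 0$ throughout the domain. This nonvanishing is the one point that genuinely needs a (short) argument, as it is what makes the rational maps well-defined and, crucially, ensures that clearing denominators recovers exactly the graph of the map rather than merely its closure.

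With this in hand I would form the graph
\[
G\coloneqq\Big\{(\pi,\eta)\in\Delta_\mathcal{A}^\mathcal{O}\times\Delta_{\mathcal{S}\times\mathcal{A}} : \eta(s,a)\,Q(\pi)=P_{s,a}(\pi)\ \text{for all }(s,a)\Big\},
\]
which is semialgebraic, being cut out by the displayed polynomial equalities together with the polynomial (in fact linear) constraints defining the two simplices. Because $Q(\pi)\neq 0$ on the domain, $G$ is precisely the graph of $\pi\mapsto\eta_\gamma^{\pi,\mu}$, and $\mathcal N_\gamma^{\mu,\beta}$ is the image of $G$ under the coordinate projection onto the $\eta$-factor. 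Tarski--Seidenberg then gives that $\mathcal N_\gamma^{\mu,\beta}$ is semialgebraic. The argument for the set of value functions $\{V_\gamma^\pi:\pi\in\Delta_\mathcal{A}^\mathcal{O}\}\subseteq\mathbb{R}^{\mathcal S}$ is verbatim the same, replacing $P_{s,a}$ by $\widetilde P_s$ and projecting onto the value-function coordinates.

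The main obstacle is not conceptual but organizational: correctly assembling $G$ as a polynomial system and verifying the nonvanishing of $Q$ so that the image is genuinely a projection of a semialgebraic set. No deeper difficulty arises, precisely because Theorem~\ref{theo:DegPOMDPs} already establishes that the parametrizations $\Psi_\gamma^\mu$ and $\pi\mapsto V_\gamma^\pi$ are rational, and the Tarski--Seidenberg principle converts rationality of the parametrization plus semialgebraicity of the parameter domain directly into semialgebraicity of the image.
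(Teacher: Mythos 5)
Your proposal is correct and follows essentially the same route as the paper: the paper's one-line proof likewise combines the rational parametrization from Theorem~\ref{theo:DegPOMDPs} with the Tarski--Seidenberg theorem to conclude semialgebraicity of both sets. The additional details you supply---nonvanishing of $\det(I-\gamma p_\pi^T)$ on the policy polytope and the explicit graph-and-projection construction---are just a careful unpacking of what the paper leaves implicit.
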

\begin{proof}
By Theorem \ref{theo:DegPOMDPs}, both sets possess a rational and thus a semialgebraic parametrization and are semialgebraic by the Tarski-Seidenberg theorem \citep{neyman2003real}. 
\end{proof}
We compute the defining linear and polynomial (in)equalities of the set of feasible state-action frequencies in Section~\ref{sec:inequalities-state-action-frequencies} for MDPs and POMDPs respectively, which shows in particular that also in the mean case the state-action frequencies form a semialgebraic set. The special properties of degree-one rational functions, which we elaborate in the Appendix~\ref{app:sec:2.3}, imply the following results. The first one is a refinement of \citet[][Lemma 4]{dadashi2019value}, stating that {linear} interpolation between two policies that differ on {a single} state leads to a {linear} interpolation of the corresponding value functions. We generalize this to state-action frequencies, explicitly compute the interpolation speed and describe the curves obtained by interpolation between arbitrary policies. Further, our formulation extends to the mean reward case (see Remark~\ref{rem:meanreward}). 
\begin{restatable}{prop}{corexlinthm}\label{cor:exlinthm}
Let $(\mathcal S, \mathcal A, \alpha, r)$ be an MDP and \(\gamma\in(0, 1)\). Further, let \(\pi_0, \pi_1\in\Delta_\mathcal A^\mathcal S\) be two policies that differ on at most $k$ states. For any $\lambda\in [0, 1]$ let \(V_\lambda\in\mathbb R^\mathcal S\) and \(\eta_\lambda^\mu\in\Delta_{\mathcal S\times\mathcal A}\) denote the value function and state-action frequency belonging to the policy \(\pi_0+\lambda(\pi_1-\pi_0)\) with respect to the discount factor \(\gamma\), the initial distribution \(\mu\) and the instantaneous reward \(r\). Then the rational degrees of $\lambda\mapsto V_\lambda$ and $\lambda\mapsto \eta_\lambda$ are at most $k$. If they differ on at most one state $\tilde s\in\mathcal S$ then 
    \[ V_\lambda = V_0 + c(\lambda) \cdot (V_1-V_0) \quad \text{and}\quad \eta_\lambda^\mu = \eta_0^\mu + c(\lambda)\cdot (\eta_1^\mu - \eta_0^\mu) \quad \text{for all } \lambda\in[0, 1], \]
where 
    \[ c(\lambda) = \frac{\det(I-\gamma p_1)\lambda}{\det(I-\gamma p_\lambda)} =  \frac{\det(I-\gamma p_1)\lambda}{(\det(I-\gamma p_1) - \det(I-\gamma p_0))\lambda + \det(I-\gamma p_0)} = \lambda \cdot \frac{\rho_\lambda^\mu(\tilde s)}{\rho_1^\mu(\tilde s)}. \]
\end{restatable}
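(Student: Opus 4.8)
The plan is to exploit that, along the segment $\pi_\lambda:=\pi_0+\lambda(\pi_1-\pi_0)$, only a few rows of the induced state-transition matrix move, so that both the value function and the state-action frequency become ratios of affine functions of $\lambda$ over a common affine denominator; a vector-valued ratio of this form automatically traces out a straight line, and identifying its parametrization yields $c(\lambda)$.

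\emph{Degrees.} For the bound on the rational degrees I would apply Theorem~\ref{theo:DegPOMDPs} in the fully observable case $\beta=\operatorname{id}$, where the compatibility set $\{s:\beta(o|s)>0\text{ for some }o\in O\}$ equals $O$. Taking $O$ to be the set of (at most $k$) states on which $\pi_0$ and $\pi_1$ disagree, the entire segment lies in the set of policies that coincide with $\pi_0$ outside $O$, so $\pi\mapsto V^\pi_\gamma$ and $\pi\mapsto\eta^{\pi,\mu}_\gamma$ are rational of degree at most $k$ there. Precomposing with the affine map $\lambda\mapsto\pi_\lambda$ cannot raise the degree, giving the first assertion for both $\lambda\mapsto V_\lambda$ and $\lambda\mapsto\eta^\mu_\lambda$.

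\emph{The case $k=1$.} Assume now $\pi_0,\pi_1$ differ only at $\tilde s$. Since $p_\pi(s'|s)=\sum_a\pi(a|s)\alpha(s'|s,a)$, the matrix $p_\lambda:=p_{\pi_\lambda}$ agrees with $p_0$ in every row except row $\tilde s$, where it is affine in $\lambda$; hence $I-\gamma p_\lambda^T$ depends on $\lambda$ only through its $\tilde s$-column, affinely. By multilinearity of the determinant in that column,
\[ \det(I-\gamma p_\lambda)=\det(I-\gamma p_\lambda^T)=(1-\lambda)\det(I-\gamma p_0)+\lambda\det(I-\gamma p_1)=:d_\lambda, \]
which is exactly the affine denominator in $c(\lambda)$ (and is nonzero on $[0,1]$, as $I-\gamma p_\pi$ is invertible for $\gamma\in(0,1)$). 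By Proposition~\ref{prop:bellmanflow}, Cramer's rule, and the degree-one bound just established, every coordinate of $\eta^\mu_\lambda$ and of $V_\lambda$ is of the form $G(\lambda)=(v_0+\lambda w)/d_\lambda$ with constant $v_0,w$ and common denominator $d_\lambda=d_0+\lambda(d_1-d_0)$. A one-line computation gives
\[ G(\lambda)-G(0)=\frac{\lambda}{d_0\,d_\lambda}\bigl(d_0 w-(d_1-d_0)v_0\bigr), \]
so $G(\lambda)-G(0)$ is always a scalar multiple of the fixed vector $d_0w-(d_1-d_0)v_0$, hence collinear with $G(1)-G(0)$. Solving for the scalar yields $G(\lambda)=G(0)+c(\lambda)(G(1)-G(0))$ with $c(\lambda)=\lambda d_1/d_\lambda=\lambda\det(I-\gamma p_1)/\det(I-\gamma p_\lambda)$, the same for every coordinate. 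Applying this to $G=\eta^\mu$ and $G=V$ proves the two interpolation identities and the first two forms of $c(\lambda)$.

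\emph{The third form and the main obstacle.} For the expression $c(\lambda)=\lambda\,\rho^\mu_\lambda(\tilde s)/\rho^\mu_1(\tilde s)$ I would use the crucial cancellation at the moving coordinate: in Cramer's rule the numerator of $\rho^\mu_\lambda(\tilde s)$ is the determinant of $I-\gamma p_\lambda^T$ with its $\tilde s$-column replaced by $\mu$, and since that was the only $\lambda$-dependent column, the numerator is constant in $\lambda$. Thus $\rho^\mu_\lambda(\tilde s)\,d_\lambda$ is independent of $\lambda$; evaluating at $\lambda=1$ gives $\det(I-\gamma p_1)/d_\lambda=\rho^\mu_\lambda(\tilde s)/\rho^\mu_1(\tilde s)$, which upon multiplication by $\lambda$ is the claimed identity. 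This same cancellation is the only genuinely non-formal step: the entries $\eta^\mu_\lambda(\tilde s,a)=\pi_\lambda(a|\tilde s)\,\rho^\mu_\lambda(\tilde s)$ are products of two $\lambda$-dependent factors and look quadratic, and it is precisely the constancy of $\rho^\mu_\lambda(\tilde s)\,d_\lambda$ (equivalently, the degree-one bound of Theorem~\ref{theo:DegPOMDPs}) that collapses the numerator to an affine function and legitimises the collinearity argument above.
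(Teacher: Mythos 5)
Your proof is correct and takes essentially the same approach as the paper: the degree bounds come from Theorem~\ref{theo:DegPOMDPs} specialized to $\beta=\operatorname{id}$, and your collinearity computation for ratios of affine functions over the common affine denominator $\det(I-\gamma p_\lambda)$ is exactly the paper's line theorem (Proposition~\ref{prop:genlinthm}) re-derived inline, which is what the paper's proof cites. Your explicit Cramer's-rule cancellation establishing $c(\lambda)=\lambda\,\rho_\lambda^\mu(\tilde s)/\rho_1^\mu(\tilde s)$ is a welcome detail that the paper's one-line proof leaves implicit.
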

{In particular, for a blind controller with two actions the set of feasible value functions and the set of feasible state-action frequencies are pieces of curves with rational parametrization of degree at most $k=\lvert\mathcal S\rvert$.}
{By Theorem~\ref{theo:DegPOMDPs},} the cumulative reward of (PO)MDPs is a degree-one rational function in every row of the {(effective)} policy. Since degree-one rational functions attain their maximum in a vertex (Corollary~\ref{cor:maxdegone}), we immediately obtain the existence of an optimal policy which is deterministic on every observation from which the state can be reconstructed, which has been shown using other methods by \cite{montufar2015geometry}. 
\begin{proposition}[Determinism of optimal policies]\label{prop:determinism}
Let \((\mathcal S, \mathcal O, \mathcal A, \alpha, \beta, r)\) be a POMDP, \(\mu\in\Delta_\mathcal S\) be an initial distribution and \(\gamma\in(0, 1)\) a discount factor and let \(\pi\in\Delta_\mathcal A^\mathcal O\) be an arbitrary policy and denote the set of observations \(o\) such that \(\lvert \{s\in\mathcal S\mid \beta(o|s)>0\}\rvert\le1\) by \(O\). Then there is a policy \(\tilde\pi\), which is deterministic on every \(o\in O\) such that \(R^\mu_\gamma(\tilde\pi)\ge R^\mu_\gamma(\pi)\). 
\end{proposition}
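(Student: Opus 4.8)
The plan is to reduce the claim to a coordinatewise (row-by-row) optimization, upgrading the policy to be deterministic on one observation at a time. Fix an observation $o\in O$ and regard $R^\mu_\gamma$ as a function of the single row $\pi(\cdot|o)\in\Delta_\mathcal A$, holding all other rows of $\pi$ fixed. By the defining property of $O$, at most one state $s$ satisfies $\beta(o|s)>0$; applying Theorem~\ref{theo:DegPOMDPs} with the distinguished observation set taken to be $\{o\}$ (and the reference policy $\pi_0$ taken to be the current $\pi$) shows that this restricted reward is a rational function of the row $\pi(\cdot|o)$ of degree at most $\lvert\{s\in\mathcal S\mid \beta(o|s)>0\}\rvert\le 1$. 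Moreover, by Proposition~\ref{prop:propdis} the quantity $R^\mu_\gamma$ is well defined and finite on all of $\Delta_\mathcal A^\mathcal O$, so this degree-one rational function has no poles on the simplex $\Delta_\mathcal A$.

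Next I would invoke the vertex-maximization property of degree-one rational functions (Corollary~\ref{cor:maxdegone}). The point is that restricting a degree-one rational function to any line segment yields a ratio of two affine functions of the segment parameter, which (having no pole in the interval) is monotone; hence on the convex polytope $\Delta_\mathcal A$ the maximum is attained at a vertex. The vertices of $\Delta_\mathcal A$ are precisely the Dirac distributions $\delta_a$, $a\in\mathcal A$, i.e.\ the deterministic action choices at $o$. Therefore there is an action $a_o$ such that replacing the row $\pi(\cdot|o)$ by $\delta_{a_o}$ does not decrease $R^\mu_\gamma$. (If $o$ has no compatible state, the dependence on that row is constant, i.e.\ degree zero, and any deterministic value may be chosen.)

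Finally I would iterate this single-row replacement over all $o\in O$. Processing the elements of $O$ in any order, each step replaces the corresponding row by a deterministic distribution without decreasing the reward; crucially, the degree-one conclusion of Theorem~\ref{theo:DegPOMDPs} for a given row holds for \emph{every} choice of the remaining rows, so making earlier observations deterministic neither undoes their determinism nor disturbs the degree-one structure at the row currently being optimized. After exhausting $O$ we obtain a policy $\tilde\pi$ that is deterministic on every $o\in O$ and satisfies $R^\mu_\gamma(\tilde\pi)\ge R^\mu_\gamma(\pi)$, as desired.

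The substantive content is entirely contained in the degree-one reduction, so the only delicate point is the correct specialization of Theorem~\ref{theo:DegPOMDPs} to $|\{o\}|=1$ together with the finiteness of $R^\mu_\gamma$ ensuring no pole interferes with the monotonicity argument on $\Delta_\mathcal A$. Granting those two facts, the remainder is a clean coordinate-ascent over the rows indexed by $O$, with no interaction between the rows that would obstruct the iteration.
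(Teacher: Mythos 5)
Your proposal is correct and follows essentially the same route as the paper's proof: restrict $R^\mu_\gamma$ to the row $\pi(\cdot|o)$ for each $o\in O$, invoke Theorem~\ref{theo:DegPOMDPs} to conclude the restriction is rational of degree at most one, apply the vertex-maximization property of Corollary~\ref{cor:maxdegone} to replace that row by a Dirac distribution without decreasing the reward, and iterate over $O$. Your added remarks on the absence of poles and on why the iteration does not disturb earlier rows simply make explicit what the paper's terser proof leaves implicit.
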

\begin{proof}
For \(o\in O\), the reward function restricted to the \(o\)-component of the policy is a rational function of degree at most one. By Corollary \ref{cor:maxdegone} (see Appendix~\ref{app:subsubsec:extremepoints}), there is a policy \(\tilde \pi\), which is deterministic on \(o\) and satisfies \(R_\gamma^\mu(\tilde\pi)\ge R_\gamma^\mu(\pi)\). Iterating over \(o\in O\) yields the result.
\end{proof}
On observations which can be made from more than one state, bounds on the required stochasticity were established by \cite{montufar2017geometry, 82841}. 

\section{The Set of Feasible Discounted State-Action Frequencies}\label{sec:disc-stat-distri}

In Corollary \ref{cor:semialg}, we have seen that the state-action frequencies form a semialgebraic set. Now we aim to describe its defining polynomial inequalities. In the case of full observability, the feasible state-action freqencies are known to form a polytope \citep{derman1970finite, altman1991markov} which is closely linked to the dual linear programming formulation of MDPs \citep{hordijk1981linear}, see also Figure \ref{fig:range4}. 
We first describe the combinatorial properties of this polytope (see Appendix~\ref{app:subsec:facelattice}) and extend the result to the partially observable case, for which we obtain explicit polynomial inequalities induced by the partial observability under a {mild} assumption. 
Most proofs are postponed to Appendix{~\ref{app:sec:disc-stat-distri}}. 
In Section~\ref{sec:optimization} we discuss how the degree of these defining polynomials allows us to upper bound the number of critical points of the optimization problem. 
We use the following explicit version of the classic characterization of the state-action frequencies as a polytope (see  Appendix~\ref{app:sec:geometryfullyobservable}).
\begin{restatable}[Characterization of \(\mathcal N_{\gamma}^\mu\)]{prop}{corchardisstadis}\label{cor:chardisstadis}
Let \((\mathcal S, \mathcal A, \alpha, r)\) be an MDP, $\mu\in\Delta_\mathcal S$ be an initial distribution and \(\gamma\in(0, 1]\). It holds that 
    \begin{equation}\label{eq:explicitdescriptionN}
        \mathcal N_\gamma^\mu = \Delta_{\mathcal S\times\mathcal A} \cap \left\{\eta\in\mathbb R^{\mathcal S\times\mathcal A} \mid \langle w_\gamma^s, \eta\rangle_{\mathcal S\times\mathcal A} =(1-\gamma)\mu_s \text{ for }s\in\mathcal S \right\} 
    \end{equation}
    where $w_\gamma^s\coloneqq \delta_s\otimes\mathds{1}_\mathcal A-\gamma\alpha(s|\cdot, \cdot)$. For $\gamma\in(0,1)$, $\Delta_{\mathcal S\times\mathcal A}$ can be replaced by $[0, \infty)^{\mathcal S\times\mathcal A}$ in \textup{\plaineqref{eq:explicitdescriptionN}}.
\end{restatable}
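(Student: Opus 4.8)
The plan is to prove the characterization of $\mathcal N_\gamma^\mu$ by showing two inclusions, where the key technical input is Proposition~\ref{prop:bellmanflow}, which states that $\eta_\gamma^{\pi,\mu}$ is the unique element of $\Delta_{\mathcal S\times\mathcal A}$ satisfying the discounted stationarity equation $\eta = \gamma P_\pi^T\eta + (1-\gamma)(\mu\ast(\pi\circ\beta))$. In the fully observable case $\beta=\operatorname{id}$, so the effective policy equals the policy itself and $\pi$ ranges over all of $\Delta_\mathcal A^\mathcal S$. The strategy is to rewrite the stationarity equation into the linear constraints $\eta^T w_\gamma^s = (1-\gamma)\mu_s$ and to verify that every point of the polytope on the right-hand side is realized by some policy.

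First I would establish the inclusion $\mathcal N_\gamma^\mu\subseteq$ (right-hand side). Given $\eta=\eta_\gamma^{\pi,\mu}$, it lies in $\Delta_{\mathcal S\times\mathcal A}$ by Proposition~\ref{prop:propdis}. To extract the linear equalities, I would sum the stationarity equation~\eqref{eq:uniquechar} over the action coordinate. Writing out $(P_\pi^T\eta)(s',a') = \sum_{s,a}\alpha(s'|s,a)(\pi\circ\beta)(a'|s')\eta(s,a)$ and summing over $a'$ collapses the policy factor (since $\sum_{a'}(\pi\circ\beta)(a'|s')=1$), yielding for each $s'$ the relation $\rho(s') = \gamma\sum_{s,a}\alpha(s'|s,a)\eta(s,a) + (1-\gamma)\mu_{s'}$, where $\rho(s')=\sum_{a'}\eta(s',a')$ is the state marginal. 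Rearranging gives exactly $\sum_{s,a}(\delta_{s'}(s)\otimes\mathds{1}(a) - \gamma\alpha(s'|s,a))\eta(s,a) = (1-\gamma)\mu_{s'}$, i.e. $\eta^T w_\gamma^{s'}=(1-\gamma)\mu_{s'}$, which is the claimed constraint.

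For the reverse inclusion I would take an arbitrary $\eta$ in the polytope and construct a policy realizing it. The natural candidate is the conditional $\pi(a|s)\coloneqq \eta(s,a)/\rho(s)$ whenever $\rho(s)\coloneqq\sum_a\eta(s,a)>0$, extended arbitrarily where $\rho(s)=0$. Then $\eta(s,a)=\rho(s)\pi(a|s)$ by construction, and reversing the marginalization computation shows that $\eta$ satisfies the full stationarity equation~\eqref{eq:uniquechar} for this $\pi$. By the uniqueness clause of Proposition~\ref{prop:bellmanflow}, $\eta$ must coincide with $\eta_\gamma^{\pi,\mu}$, hence $\eta\in\mathcal N_\gamma^\mu$. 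The main obstacle here is the degenerate case $\rho(s)=0$: I must argue that the linear constraints force $\eta(s,a)=0$ for all such $s$, so that the arbitrary choice of $\pi(\cdot|s)$ does not affect $\eta$ and the reconstruction is consistent; for $\gamma\in(0,1)$ this follows because $\rho(s)\ge(1-\gamma)\mu_s\ge 0$ together with nonnegativity, and the constraint at $s$ pins down the mass correctly.

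Finally, for the claim that $\Delta_{\mathcal S\times\mathcal A}$ may be replaced by $[0,\infty)^{\mathcal S\times\mathcal A}$ when $\gamma\in(0,1)$, I would show that the normalization $\sum_{s,a}\eta(s,a)=1$ is already implied by the nonnegativity constraints together with the linear equalities. Summing $\eta^T w_\gamma^s=(1-\gamma)\mu_s$ over all $s\in\mathcal S$ and using that $\sum_s\alpha(s|s',a)=1$ for every $(s',a)$ (row-stochasticity of $\alpha$), the coefficient of each $\eta(s',a)$ becomes $1-\gamma$, giving $(1-\gamma)\sum_{s',a}\eta(s',a)=(1-\gamma)\sum_s\mu_s=(1-\gamma)$, and dividing by $1-\gamma\neq 0$ yields $\sum_{s',a}\eta(s',a)=1$. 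Thus the explicit simplex membership is redundant given $\eta\ge 0$ and the equalities, which is exactly where the hypothesis $\gamma<1$ is used. I expect the bookkeeping in the marginalization step and the careful treatment of the $\rho(s)=0$ states to be the only delicate points; the rest is a direct translation of the stationarity equation into coordinates.
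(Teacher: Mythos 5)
Your proposal is correct and is essentially the paper's own proof: the forward inclusion by marginalizing \eqref{eq:uniquechar} over actions, the reverse inclusion by conditioning and invoking the uniqueness in Proposition~\ref{prop:bellmanflow} (this is precisely the paper's Proposition~\ref{prop:invpara}), and the redundancy of the simplex normalization for $\gamma<1$ via the identity $\sum_s w_\gamma^s=(1-\gamma)\mathds{1}$; the paper merely interposes the transition-frequency map $f_\alpha$ and the discounted Kirchhoff polytope $\Xi_\gamma^\mu$ as bookkeeping, which you bypass by working directly in state-action space. One small remark: the degenerate case you flag is even simpler than you suggest, since $\eta\ge 0$ together with $\sum_a\eta(s,a)=0$ already forces $\eta(s,a)=0$ for all $a$, so no appeal to the linear constraints is needed there.
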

Now we turn towards the partially observable case and introduce the following notation.
\begin{restatable}[Effective policy polytope]{defi}{defeffpolpol}\label{def:effpolpol}
\emph{We call the set of effective policies $\tau=\pi\circ \beta\in\Delta^{\mathcal{S}}_{\mathcal{A}}$ the \emph{effective policy polytope} and denote it by \(\Delta_\mathcal A^{\mathcal S,
\beta}\).}
\end{restatable}
Note that \(\Delta_{\mathcal A}^{\mathcal S, \beta}\) is indeed a polytope since it is the image of the polytope \(\Delta_\mathcal A^\mathcal O\) under the linear mapping \(\pi\mapsto \pi\circ\beta=\beta\pi\). Hence, we can write it as an intersection $\Delta_{\mathcal A}^{\mathcal S, \beta} = \Delta_\mathcal A^\mathcal S \cap \mathcal U \cap\mathcal C$, where $\mathcal U, \mathcal C\subseteq\mathbb R^{\mathcal S\times\mathcal A}$ are an affine subspace and a polyhedral cone and describe a finite set of linear equalities and a finite set of linear inequalities respectively. 

\paragraph{Defining linear inequalities of the effective policy polytope}\label{sec:ineqs_effective}
Obtaining  inequality descriptions of the images of polytopes under linear maps is a fundamental problem that is non-trivial in general. 
It can be approached algorithmically, e.g., by Fourier-Motzkin elimination, block elimination, vertex approaches, and equality set projection \citep{Jones:169768}. 
In the special case where the linear map is injective, one can give the defining inequalities in closed form as we show in {Appendix~\ref{app:sec:ineqs_effective}}. 
{Hence, for the purpose of obtaining closed-formulas for the effective policy polytope we make the following assumption. 
However, our subsequent analysis in Section~\ref{sec:inequalities-state-action-frequencies} can handle any inequalities. }

\begin{restatable}[]{ass}{assinvobsmec}\label{ass:invobsmec}
\emph{The matrix \(\beta \in\Delta_\mathcal O^\mathcal S\subseteq\mathbb R^{\mathcal S\times\mathcal O}\) has linearly independent columns.}
\end{restatable}

\begin{restatable}[]{rem}{remobservability}\label{rem:observability}
\emph{The assumption above does not imply that the system is fully observable. 
Recall that if $\beta$ has linearly independent columns, the Moore-Penrose takes the form $\beta^+ = (\beta^T \beta)^{-1}\beta^T$. 
An interesting special case is when $\beta$ is deterministic but may map several states to the same observation (this is the partially observed setting considered in numerous works). 
In this case, $\beta^+ = \operatorname{diag}(n_1^{-1},\ldots, n_{|\mathcal{O}|}^{-1})\beta^T$, where $n_o$ denotes the number of states with observation $o$. In this case, $\beta^+_{so}$ agrees with the conditional distribution $\beta(s|o)$ with respect to a uniform prior over the states; however, this is not in general the case since $\beta^+$ can have negative entries.}
\end{restatable}

\begin{restatable}[\(H\)-description of the effective policy polytope] {theo} {theohdeseffpolpol} \label{theo:hdeseffpolpol}
Let \((\mathcal S, \mathcal O, \mathcal A, \alpha, \beta, r)\) be a POMDP and let Assumption \ref{ass:invobsmec} hold. {Then it holds that
    \begin{equation}\label{eq:hdeseffpolpol}
        \Delta_{\mathcal A}^{\mathcal S, \beta} = \Delta_\mathcal A^\mathcal S\cap\mathcal U\cap \mathcal C = \mathcal U\cap\mathcal C\cap \mathcal D,
    \end{equation}
where $\mathcal U = \{\pi\circ\beta \mid \pi\in\mathbb R^{\mathcal S\times\mathcal O}\}=\operatorname{ker}(\beta^T)^\perp$ is a subspace, $\mathcal C = \{\tau\in\mathbb R^{\mathcal S\times\mathcal A}\mid \beta^+\tau\ge0\}$ is a pointed polyhedral cone and $\mathcal D = \{\tau\in\mathbb R^{\mathcal S\times\mathcal A}\mid \sum_a(\beta^+\tau)_{oa} = 1 \text{ for all }o\in\mathcal O\}$ an affine subspace.
}
Further, the face lattices of $\Delta_\mathcal A^\mathcal O$ and $\Delta_{\mathcal A}^{\mathcal S, \beta}$ are isomorphic.
\end{restatable}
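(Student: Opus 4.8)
The plan is to exploit that Assumption~\ref{ass:invobsmec} makes the linear map $f_\beta\colon \pi\mapsto\pi\circ\beta$ injective, so that the effective policy $\tau$ determines its preimage $\pi$ uniquely. In matrix form $\tau=\pi\circ\beta$ is left multiplication by $\beta$, and since $\beta$ has linearly independent columns the Moore--Penrose inverse $\beta^+=(\beta^T\beta)^{-1}\beta^T$ from Remark~\ref{rem:observability} is a genuine left inverse, $\beta^+\beta=I$. Hence, whenever $\tau=\pi\circ\beta$, we recover $\pi=\beta^+\tau$. I would record the two facts that drive everything: (i) $\beta^+\beta=I$ (the left-inverse/injectivity property), and (ii) $\beta\mathds{1}=\mathds{1}$, which is just the row-stochasticity of $\beta$. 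Fact (ii) is what couples the normalization constraints of $\pi$ and $\tau$.

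For the first equality I would argue by double inclusion. For ``$\subseteq$'', take $\tau=\pi\circ\beta$ with $\pi\in\Delta_\mathcal A^\mathcal O$: then $\tau\ge0$ as a product of nonnegative matrices, and $\tau\mathds{1}=\beta\pi\mathds{1}=\beta\mathds{1}=\mathds{1}$ shows $\tau\in\Delta_\mathcal A^\mathcal S$; each column of $\tau$ lies in the column space $\ker(\beta^T)^\perp$, so $\tau\in\mathcal U$; and $\beta^+\tau=\pi\ge0$ gives $\tau\in\mathcal C$. For ``$\supseteq$'', take $\tau\in\Delta_\mathcal A^\mathcal S\cap\mathcal U\cap\mathcal C$ and set $\pi\coloneqq\beta^+\tau$. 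Membership in $\mathcal U$ gives $\tau=\beta\pi$; membership in $\mathcal C$ gives $\pi\ge0$; and the row sums transfer because $\beta\pi\mathds{1}=\tau\mathds{1}=\mathds{1}=\beta\mathds{1}$ together with injectivity of $\beta$ forces $\pi\mathds{1}=\mathds{1}$. Thus $\pi\in\Delta_\mathcal A^\mathcal O$ and $\tau=f_\beta(\pi)$.

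For the second equality it suffices to check that, on $\mathcal U\cap\mathcal C$, the constraint $\tau\in\Delta_\mathcal A^\mathcal S$ is equivalent to $\tau\in\mathcal D$. Indeed, on $\mathcal U\cap\mathcal C$ we have $\tau=\beta\pi$ with $\pi=\beta^+\tau\ge0$, so $\tau\ge0$ is automatic; the remaining requirement $\tau\mathds{1}=\mathds{1}$ is, by the same injectivity argument, equivalent to $\pi\mathds{1}=\mathds{1}$, i.e.\ to $\sum_a(\beta^+\tau)_{oa}=1$ for every $o$, which is precisely the defining condition of $\mathcal D$. This is the only genuinely nonformal step: its content is that nonnegativity of $\tau$ is already implied by $\mathcal C$, while normalization is captured exactly by $\mathcal D$. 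The descriptions of $\mathcal U$ (a linear subspace), $\mathcal C$ (a polyhedral cone cut out by the linear inequalities $\beta^+\tau\ge0$) and $\mathcal D$ (an affine subspace) are then read off directly; pointedness of $\mathcal C$ on the relevant subspace follows because $\beta^+$ is injective on $\mathcal U=\ker(\beta^T)^\perp$.

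Finally, the face-lattice isomorphism I would obtain without reference to the $H$-description: $f_\beta$ is an injective linear map, so its restriction to the polytope $\Delta_\mathcal A^\mathcal O$ is an affine bijection onto $\Delta_\mathcal A^{\mathcal S,\beta}$ with affine inverse $\tau\mapsto\beta^+\tau$. An affine isomorphism of polytopes carries faces to faces bijectively and preserves inclusions, hence induces an isomorphism of the two face lattices. The main obstacle is not any single hard estimate but the careful bookkeeping of the stochasticity constraints in the second and third paragraphs---keeping track of which of the nonnegativity and normalization conditions are automatic and which must be imposed---together with the clarification of pointedness, all of which hinge on Assumption~\ref{ass:invobsmec} through the identities $\beta^+\beta=I$ and $\beta\mathds{1}=\mathds{1}$.
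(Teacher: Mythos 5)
Your proof is correct, and it rests on the same two identities that drive the paper's argument ($\beta^+\beta=I$ from Assumption~\ref{ass:invobsmec} and $\beta\mathds 1=\mathds 1$ from row-stochasticity), but it is organized in the opposite direction and is more self-contained. The paper first establishes the \emph{second} equality $\Delta_{\mathcal A}^{\mathcal S,\beta}=\mathcal U\cap\mathcal C\cap\mathcal D$ by invoking its general lemma (\ref{eq:hyperhyper}) on images of half-spaces under injective linear maps (applied to $B=I\otimes\beta$ via the vec-trick), and only then deduces the \emph{first} equality by showing that the $\mathcal D$-constraints are redundant inside $\Delta_\mathcal A^\mathcal S$ — the computation $\sum_a(\beta^+\tau)_{oa}=\sum_s\beta^+_{os}=1$ using $\beta^+\mathds 1=\mathds 1$ — together with $\beta(\Delta_\mathcal A^\mathcal O)\subseteq\Delta_\mathcal A^\mathcal S$. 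You instead prove the first equality directly by double inclusion, explicitly reconstructing $\pi=\beta^+\tau$ and transferring the normalization through injectivity, and then obtain the second equality by showing that on $\mathcal U\cap\mathcal C$ membership in $\Delta_\mathcal A^\mathcal S$ is equivalent to membership in $\mathcal D$; this is elementary and avoids the general polytope-image machinery, at the cost of not yielding the reusable half-space lemma the paper exploits elsewhere. Two further points in your favor: you actually prove the face-lattice claim (via the affine bijection $\pi\mapsto\beta\pi$ with inverse $\tau\mapsto\beta^+\tau$, which carries faces to faces), whereas the paper's written proof asserts it in the statement but never argues it; and your caveat that $\mathcal C$ is pointed only after intersecting with $\mathcal U$ is more careful than the paper's bare assertion, since $\mathcal C=\{\tau\mid\beta^+\tau\ge0\}$ contains the nontrivial subspace $\ker(\beta^T)$ whenever $|\mathcal O|<|\mathcal S|$ and hence is not pointed in $\mathbb R^{\mathcal S\times\mathcal A}$ itself.
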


\paragraph{Defining polynomial inequalities of the feasible state-action frequencies}\label{sec:inequalities-state-action-frequencies}
In order to transfer inequalities in $\Delta_\mathcal A^\mathcal S$ to inequalities in the set of state-action frequencies $\mathcal N_\gamma^\mu$, we use that the inverse of $\pi\mapsto\eta^{\pi}$ is given through conditioning (see Proposition~\ref{prop:invpara}) under the following assumption.
\begin{restatable}[Positivity]{ass}{asspositivity}\label{ass:positivity}
\emph{Let \(\rho^{\pi, \mu}_\gamma>0\) hold entrywise for all policies \(\pi\in\Delta_\mathcal A^\mathcal S\).}
\end{restatable}
This assumption holds in particular, if either \(\alpha>0\) and $\gamma>0$ or $\gamma<1$ and \(\mu>0\) entrywise {(see Appendix~\ref{app:sec:geometryfullyobservable})}. {Assumption~\ref{ass:positivity} is standard in linear programming approaches and necessary for the convergence of policy gradient methods in MDPs~\citep{kallenberg1994survey,mei2020global}.} 
By conditioning, we can translate linear inequalities in $\Delta_\mathcal A^\mathcal S$ into polynomial inequalities in $\mathcal N_\gamma^\mu$. 
\begin{restatable}[Correspondence of inequalities]{prop}{propcorlinine}\label{prop:corlinine}
Let $(\mathcal S, \mathcal A, \alpha, r)$ be an MDP, $\tau\in\Delta_\mathcal A^\mathcal S$ and let $\eta\in\Delta_{\mathcal S\times\mathcal A}$ denote its corresponding discounted state-action frequency for some $\mu\in\Delta_\mathcal S$ and $\gamma\in(0, 1]$. Let $c\in\mathbb R, b\in\mathbb R^{\mathcal S\times\mathcal A}$ and set $S \coloneqq \left\{ s\in\mathcal S \mid b_{sa} \ne 0 \text{ for some } a \in \mathcal A  \right\}$. Then
    \[ \sum_{s, a} b_{sa} \tau_{sa} \ge c \quad \text{implies} \quad \sum_{s\in S} \sum_a b_{sa}\eta_{sa}\prod_{s'\in S\setminus\{s\}}\sum_{a'} \eta_{s'a'} - c \prod_{s'\in S}\sum_{a'} \eta_{s'a'} \ge0, \]
where the right is a multi-homogeneous polynomial\footnote{A polynomial $p\colon\mathbb R^{n_1}\times\dots\times\mathbb R^{n_k}\to\mathbb R$ is called \emph{multi-homogeneous} with \emph{multi-degree} $(d_1, \dots, d_k)\in\mathbb N^k$, if it is homogeneous of degree $d_j$ 
{in the $j$-th block of variables for $j=1, \dots, k$.}} in the blocks $(\eta_{sa})_{a\in\mathcal A}\in\mathbb R^\mathcal A$ with multi-degree $\mathds{1}_S\in\mathbb N^\mathcal S$.
If further Assumption \ref{ass:positivity} holds, the inverse implication also holds. 
\end{restatable}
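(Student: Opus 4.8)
The plan is to reduce both implications to the elementary factorization $\eta_{sa} = \rho(s)\tau_{sa}$, where $\rho(s)=\sum_{a'}\eta_{sa'}$ is the state marginal $\rho^{\tau,\mu}_\gamma(s)$; this is the conditioning principle of Proposition~\ref{prop:invpara}. Since $b_{sa}=0$ for $s\notin S$, only the blocks indexed by $S$ enter either inequality. First I would treat the forward direction in the generic case where $\rho(s)>0$ for every $s\in S$. Here $\tau_{sa}=\eta_{sa}/\rho(s)$, so the hypothesis $\sum_{s\in S}\sum_a b_{sa}\tau_{sa}\ge c$ reads $\sum_{s\in S}\sum_a b_{sa}\eta_{sa}/\rho(s)\ge c$. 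Multiplying through by the strictly positive product $\prod_{s'\in S}\rho(s')=\prod_{s'\in S}\sum_{a'}\eta_{s'a'}$ clears all denominators and yields exactly the claimed polynomial inequality, because $\big(\prod_{s'\in S}\rho(s')\big)/\rho(s)=\prod_{s'\in S\setminus\{s\}}\rho(s')$.

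The remaining case of the forward direction is degenerate: if $\rho(s_0)=0$ for some $s_0\in S$, then $\eta_{s_0 a}=\rho(s_0)\tau_{s_0 a}=0$ for all $a$, so the factor $\sum_{a'}\eta_{s_0 a'}$ vanishes. This factor appears in the final product and in every summand $\prod_{s'\in S\setminus\{s\}}\sum_{a'}\eta_{s'a'}$ with $s\ne s_0$, while the summand with $s=s_0$ already has $\sum_a b_{s_0 a}\eta_{s_0 a}=0$; hence the entire left-hand side is zero and the inequality $0\ge0$ holds trivially. This shows the forward direction needs no positivity assumption. The multi-homogeneity claim is then checked by inspection: scaling a fixed block $(\eta_{ta})_a$ by $\lambda$ for $t\in S$ multiplies every term of the expression by exactly one factor depending on block $t$---either the factor $\sum_a b_{ta}\eta_{ta}$ or the factor $\sum_{a'}\eta_{ta'}$---so the whole expression scales by $\lambda$, while blocks outside $S$ do not appear, giving multi-degree $\mathds{1}_S$.

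For the inverse direction I would invoke Assumption~\ref{ass:positivity}, which guarantees $\rho(s)=\sum_{a'}\eta_{sa'}>0$ for every $s$, hence $\prod_{s'\in S}\rho(s')>0$. Dividing the polynomial inequality by this strictly positive quantity reverses the denominator-clearing step and recovers $\sum_{s\in S}\sum_a b_{sa}\eta_{sa}/\rho(s)\ge c$, i.e.\ $\sum_{s,a}b_{sa}\tau_{sa}\ge c$. The only real subtlety, and the reason the two implications are not symmetric, lies precisely here: multiplying an inequality by a nonnegative quantity always preserves its direction, but the reverse step is licit only when that quantity is strictly positive. Thus the positivity assumption is exactly what rules out the degenerate vanishing handled above and upgrades the correspondence to an equivalence. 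I expect the bookkeeping of the degenerate case and the careful identification of where strict positivity is used to be the main points requiring attention; the algebra itself is just clearing and restoring denominators.
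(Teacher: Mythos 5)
Your proof is correct and rests on the same idea as the paper's: the factorization $\eta_{sa}=\rho_s\tau_{sa}$ from Proposition~\ref{prop:invpara} combined with multiplying or dividing by $\prod_{s'\in S}\rho_{s'}$. The only difference is organizational: the paper substitutes $\eta_{sa}=\tau_{sa}\rho_s$ directly into the polynomial expression, which shows it identically equals $\bigl(\sum_{s,a}b_{sa}\tau_{sa}-c\bigr)\prod_{s'\in S}\rho_{s'}$ and thereby absorbs your degenerate case $\rho(s_0)=0$ without a case split, whereas your substitution $\tau_{sa}=\eta_{sa}/\rho(s)$ requires (and correctly receives) that separate treatment.
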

The preceding proposition shows that the state-action frequencies of a linearly constrained policy model, where the constraints only address the policy in individual states form a polytope. However, the effective policy polytope is almost never of this box type (see Remark~\ref{app:rem:boxmodels}). 
\begin{example}[Blind controller]\label{ex:blind-controller}
{For a blind controller the linear equalities defining the effective policy polytope in $\Delta_\mathcal A^\mathcal S$ are $\tau_{s_1a} - \tau_{s_2a} = \tau(a|s_1) - \tau(a|s_2) = 0$ for all $a\in\mathcal A, s_1, s_2\in\mathcal S$.
They translate into the polynomial equalities $\eta_{s_1a}\rho_{s_2} - \eta_{s_2a}\rho_{s_1} = 0$ for all $a\in\mathcal A, s_1, s_2\in\mathcal S$. 
In the case that $\mathcal A = \{a_1, a_2\}$, we obtain
    \[ 0 = \eta_{s_1a_1}(\eta_{s_2a_1}+\eta_{s_2a_1}) - \eta_{s_2a_1}(\eta_{s_1a_1}+\eta_{s_1a_1}) = \eta_{s_1a_1}\eta_{s_2a_2} - \eta_{s_1a_2}\eta_{s_2a_1} \quad\text{for all } s_1, s_2\in\mathcal S, 
    \] 
which is precisely the condition that all $2\times2$ minors of $\eta$ vanish. Hence, in this case the set of state-action frequencies $\mathcal N_\gamma^{\mu, \beta}$ is given as the intersection of $\mathcal N_\gamma^\mu$ of state-action frequencies of the associated MDP and the determinantal variety of rank one matrices.}
\end{example}

The following result describes the geometry of the set of feasible state-action frequencies. 

\begin{restatable}{theo} {theogeometrydiscstadisc}\label{theo:geometrydiscstadisc}
Let $(\mathcal S, \mathcal O, \mathcal A, \alpha, \beta, r)$ be a POMDP, $\mu\in\Delta_\mathcal S$ and $\gamma\in(0, 1]$ and assume that Assumption \ref{ass:positivity} holds. Then we have $\mathcal N_\gamma^{\mu, \beta} = \mathcal N_\gamma^{\mu} \cap\mathcal V\cap \mathcal B$, where $\mathcal V$ is a variety described by multi-homogeneous polynomial equations and $\mathcal B$ is a basic semialgebraic set described by multi-homogeneous polynomial inequalities. Further, the face lattices of $\Delta_{\mathcal A}^{\mathcal S, \beta}$ and $\mathcal N_\gamma^{\mu, \beta}$ are isomorphic. 
\end{restatable}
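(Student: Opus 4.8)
The plan is to transport the $H$-description of the effective policy polytope (Theorem~\ref{theo:hdeseffpolpol}) through the parametrization $\Psi_\gamma^\mu$ into the space of state-action frequencies, using the correspondence of inequalities (Proposition~\ref{prop:corlinine}). The structural fact I would establish first is that, under Assumption~\ref{ass:positivity}, the map $\Psi_\gamma^\mu\colon\Delta_\mathcal A^\mathcal S\to\mathcal N_\gamma^\mu$ is a bijection whose inverse is the conditioning map $\eta\mapsto\tau$ with $\tau(a|s)=\eta_{sa}/\sum_{a'}\eta_{sa'}$ (Proposition~\ref{prop:invpara}); this is well defined precisely because $\rho_\gamma^{\pi,\mu}>0$ entrywise. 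Since $\Psi_\gamma^\mu$ is continuous with continuous inverse on the compact set $\mathcal N_\gamma^\mu$, it is a homeomorphism, and by construction it restricts to a bijection $\Delta_\mathcal A^{\mathcal S,\beta}\to\mathcal N_\gamma^{\mu,\beta}$.

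For the semialgebraic description, I would write $\Delta_\mathcal A^{\mathcal S,\beta}=\Delta_\mathcal A^\mathcal S\cap\mathcal U\cap\mathcal C$ via Theorem~\ref{theo:hdeseffpolpol}, where $\mathcal U$ is cut out by finitely many linear equalities $\langle b,\tau\rangle=c$ and $\mathcal C$ by finitely many linear inequalities $\langle b,\tau\rangle\ge c$. Applying Proposition~\ref{prop:corlinine} to each inequality produces a multi-homogeneous polynomial $P_{b,c}$ of multi-degree $\mathds{1}_S$ with $\langle b,\tau\rangle\ge c$ equivalent to $P_{b,c}(\eta)\ge0$ for $\eta=\Psi_\gamma^\mu(\tau)$, the reverse implication using Assumption~\ref{ass:positivity}. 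Each equality then translates to the pair $P_{b,c}(\eta)\ge0$ and $P_{-b,-c}(\eta)=-P_{b,c}(\eta)\ge0$, that is, to the single equation $P_{b,c}(\eta)=0$. Collecting the equations coming from $\mathcal U$ into a variety $\mathcal V$ and the inequalities coming from $\mathcal C$ into a basic semialgebraic set $\mathcal B$, I would argue that for $\eta\in\mathcal N_\gamma^\mu$ with conditioning $\tau=(\Psi_\gamma^\mu)^{-1}(\eta)$ one has $\eta\in\mathcal N_\gamma^{\mu,\beta}$ if and only if $\tau\in\mathcal U\cap\mathcal C$ if and only if $\eta\in\mathcal V\cap\mathcal B$, which yields $\mathcal N_\gamma^{\mu,\beta}=\mathcal N_\gamma^\mu\cap\mathcal V\cap\mathcal B$. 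The ambient membership $\tau\in\Delta_\mathcal A^\mathcal S$ is already absorbed into $\eta\in\mathcal N_\gamma^\mu$.

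For the face lattices, I would exploit that $\Psi_\gamma^\mu$ preserves support patterns: because $\rho_\gamma^{\pi,\mu}(s)>0$ for every $s$, one has $\eta_{sa}=0$ if and only if $\tau(a|s)=0$, so $\operatorname{supp}(\eta)$ is determined by $\operatorname{supp}(\tau)$. Defining the faces of $\mathcal N_\gamma^{\mu,\beta}$ as the images under $\Psi_\gamma^\mu$ of the faces of $\Delta_\mathcal A^{\mathcal S,\beta}$, the homeomorphism property makes $F\mapsto\Psi_\gamma^\mu(F)$ an inclusion-preserving bijection between the two collections and hence a lattice isomorphism; chaining with the isomorphism $\Delta_\mathcal A^\mathcal O\cong\Delta_\mathcal A^{\mathcal S,\beta}$ from Theorem~\ref{theo:hdeseffpolpol} ties everything back to the product of simplices.

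I expect the face-lattice statement to be the main obstacle, since $\mathcal N_\gamma^{\mu,\beta}$ is not convex and ``face'' must be given a meaning compatible both with the polytope structure of $\Delta_\mathcal A^{\mathcal S,\beta}$ and with the support stratification of the frequency set; the work is to verify that the $\Psi_\gamma^\mu$-image of a face is intrinsically characterized (e.g.\ by a fixed vanishing/support pattern), so that the correspondence is canonical rather than a merely transported labeling. By contrast, the semialgebraic identity should be a fairly mechanical consequence of Proposition~\ref{prop:corlinine} once the bijectivity of $\Psi_\gamma^\mu$ is in hand.
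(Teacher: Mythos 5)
Your proof of the set identity $\mathcal N_\gamma^{\mu,\beta}=\mathcal N_\gamma^{\mu}\cap\mathcal V\cap\mathcal B$ is correct and follows the paper's own route: combine the $H$-description $\Delta_{\mathcal A}^{\mathcal S,\beta}=\Delta_\mathcal A^\mathcal S\cap\mathcal U\cap\mathcal C$ of Theorem~\ref{theo:hdeseffpolpol} with Proposition~\ref{prop:corlinine}, using that under Assumption~\ref{ass:positivity} the implication in Proposition~\ref{prop:corlinine} becomes an equivalence and that conditioning (Proposition~\ref{prop:invpara}) inverts $\Psi_\gamma^\mu$, so membership of $\tau$ in $\mathcal U\cap\mathcal C$ is equivalent to membership of $\eta=\Psi_\gamma^\mu(\tau)$ in $\mathcal V\cap\mathcal B$. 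Your observation that an equality $\langle b,\tau\rangle=c$ translates into the single equation $P_{b,c}(\eta)=0$ because $P_{-b,-c}=-P_{b,c}$ is also right, as is absorbing $\tau\in\Delta_\mathcal A^\mathcal S$ into $\eta\in\mathcal N_\gamma^\mu$.

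The gap is in the face-lattice half. Declaring the faces of $\mathcal N_\gamma^{\mu,\beta}$ to be the $\Psi_\gamma^\mu$-images of the faces of $\Delta_{\mathcal A}^{\mathcal S,\beta}$ makes the isomorphism true by construction and therefore proves nothing: the theorem refers to the face lattice that $\mathcal N_\gamma^{\mu,\beta}$ carries intrinsically as a basic semialgebraic set, which the paper defines in Appendix~\ref{sec:app:1.2} through vanishing patterns of its own defining polynomials, $F_J=\{\eta\in\mathcal N_\gamma^{\mu,\beta}\mid p_j(\eta)=0 \text{ for } j\in J\}$. You correctly flag this as the main obstacle but never close it, even though the needed tool is already in your hands: the same equivalence in Proposition~\ref{prop:corlinine} that you invoked for the set identity shows that a defining linear constraint of $\Delta_{\mathcal A}^{\mathcal S,\beta}$ is active at $\tau$ if and only if the corresponding multi-homogeneous polynomial vanishes at $\eta=\Psi_\gamma^\mu(\tau)$. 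Since the defining polynomials of $\mathcal N_\gamma^{\mu,\beta}$ are exactly the Proposition~\ref{prop:corlinine}-images of the defining linear constraints of $\Delta_{\mathcal A}^{\mathcal S,\beta}$, this yields $\Psi_\gamma^\mu(F_J)=F_J'$ for every active set $J$, i.e.\ the transported labeling coincides with the intrinsic one; injectivity of $\Psi_\gamma^\mu$ (Assumption~\ref{ass:positivity}) then makes $F_J\mapsto F_J'$ a bijection preserving inclusion in both directions, hence a lattice isomorphism (the paper instead verifies $\land$ and $\lor$ directly, which amounts to the same thing). Note also that your support-pattern remark ($\eta_{sa}=0\iff\tau_{sa}=0$) only accounts for the constraints $\tau_{sa}\ge0$ inherited from $\Delta_\mathcal A^\mathcal S$; the facets of $\Delta_{\mathcal A}^{\mathcal S,\beta}$ are cut out by $(\beta^+\tau)_{oa}\ge0$, whose activeness is not a support condition on $\tau$, so it is the general correspondence of active constraints, not support preservation, that carries the argument.
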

\begin{remark}
The variety $\mathcal V$ corresponds to the subspace $\mathcal U$ and the basic semialgebraic set $\mathcal B$ to the cone $\mathcal C$ from \plaineqref{eq:hdeseffpolpol}. Further, closed form expressions for the defining polynomials can be computed using Proposition \ref{prop:corlinine} (see also Remark \ref{rem:defpoline}). The statement about isomorphic face lattices is in the sense that $\Delta_{\mathcal A}^{\mathcal S, \beta}$ and $\mathcal N_\gamma^{\mu, \beta}$ have the same number of surfaces of a given dimension with the same neighboring properties. This can be seen in Figure \ref{fig:range4}, where the effective policy polytope and the set of state-action frequencies both have four vertices, four edges, and one two-dimensional face. 
\end{remark}

\begin{restatable}{rem}{remdefpoline}\label{rem:defpoline}
\emph{By Theorem~\ref{theo:hdeseffpolpol} and Proposition \ref{prop:corlinine}, the defining polynomials of the basic semialgebraic set $\mathcal B$ from Theorem \ref{theo:geometrydiscstadisc} are indexed by $a\in\mathcal A, o\in\mathcal O$ and are given by
    \begin{equation}\label{eq:definingPolynomialInequalities}
        p_{ao}(\eta)\coloneqq \sum_{s\in S_o} \bigg( \beta^{+}_{os} 
    \eta_{sa} \prod_{s'\in S_o\setminus\{s\}} \sum_{a'} \eta_{s' a'} \bigg) = \sum_{f\colon S_o\to\mathcal{A}} \bigg(\sum_{s'\in f^{-1}(\{a\})} \beta^+_{os'}\bigg) \prod_{s\in S_o}\eta_{sf(s)} \ge 0,
    \end{equation}
where $S_o\coloneqq\{s\in\mathcal S\mid \beta^+_{os}\ne0\}$. The polynomials depend only on \(\beta\) and not on \(\gamma\), \(\mu\) nor \(\alpha\), and have $|S_o||\mathcal{A}|^{|\mathcal{S}_o|-1}$ monomials of degree \(\lvert S_o\rvert\) of the form $\prod_{s\in S_o}\eta_{s f(s)}$ for some $f\colon S_o\to\mathcal{A}$. In particular, we can read of the multi-degree of $p_{ao}$ with respect to the blocks $(\eta_{sa})_{a\in \mathcal A}$ which is given by $\mathds{1}_{S_o}$ (see also Proposition~\ref{prop:corlinine}). A complete description of the set \(\mathcal N_\gamma^{\mu, \beta}\) via (in)equalities follows from the description of \(\mathcal N_\gamma^\mu\) via linear (in)equalities given in \plaineqref{eq:explicitdescriptionN}. In Section~\ref{sec:optimization} we discuss how the degree of these polynomials controls the complexity of the optimization problem. }
\end{restatable}

\begin{remark}[Planning in POMDPs as a polynomial optimization problem]\label{rem:polynomialProgram}
{The semialgebraic description of the set $\mathcal N_\gamma^{\mu, \beta}$ of feasible state-action distributions allows us to reformulate the reward maximization as a polynomially constrained optimization problem with linear objective (see also Remark~\ref{rem:polynomialProgram2} and Algorithm~\ref{alg:cap}). This reformulation allows the use of constrained optimization algorithms, which we demonstrate in Appendix~\ref{app:sec:plots} on the toy example of Figure~\ref{fig:range4} and a grid world.}
Note that this polynomial program is different to the quadratic program obtained by~\cite{amato2006solving}.
\end{remark}

\section{{Number and Location of Critical Points}}
\label{sec:optimization}

Although the reward function of MDPs is non convex, it still exhibits desirable properties from a standpoint of optimization. For example, without any assumptions, every policy can be continuously connected to an optimal policy by a path along which the reward is monotone (see Appendix~\ref{app:subsec:connectednes}). Under mild conditions, all policies which are critical points of the reward function are globally optimal \citep{bhandari2019global}. In partially observable systems, the situation is fundamentally different. In this case, suboptimal local optima of the reward function can exist as can be seen in Figure~\ref{fig:range4} \citep[see also][]{poupart2011analyzing, bhandari2019global}. In the following we use the {geometric} description of the discounted state-action frequencies to {study} the number {and location} of critical points. {These are important properties of the optimization problem and have implications on the required stochasticity of optimal policies. In Appendix~\ref{app:sec:optimization} we discuss the mean reward case and an example} and describe the sublevelsets as semialgebraic sets. 

We regard the reward as a linear function $p_0$ over the set of feasible state-action frequencies $\mathcal N_\gamma^{\mu, \beta}$. Under Assumption \ref{ass:positivity} $\pi\mapsto\eta^\pi$ is injective and has a full-rank Jacobian everywhere (see Appendix~\ref{app:subsubsec:jacobian}). Hence, the critical points in the policy polytope $\Delta_\mathcal A^\mathcal 
O$ correspond to the critical points of $p_0$ on $\mathcal N_\gamma^{\mu, \beta}$ \citep[see][]{trager2019pure}. In general, critical points of this linear function can occur on every face of the semialgebraic set $\mathcal N_\gamma^{\mu, \beta}$. The optimization problem thus has a combinatorial and a geometric component, corresponding to the number of faces of each dimension and the number of critical points in the relative interior of any given face. We have discussed the combinatorial part in Theorem~\ref{theo:geometrydiscstadisc} and focus now on the geometric part. Writing $\mathcal N_\gamma^{\mu, \beta} {=\{\eta\in\mathbb{R}^{\mathcal{S}\times\mathcal{A}}\mid p_i(\eta)\leq 0, i\in I\}}$, we are interested in the number of critical points on the interior of a face, 
    \[ \operatorname{int}(F_J)=\{\eta\in\mathcal N_\gamma^{\mu,\beta}\mid p_j(\eta) = 0 \text{ for }j\in J, p_i(\eta)>0\text{ for }i\in I\setminus J \}. \]
Note that a point $\eta$ is critical {on $\operatorname{int}(F_J)$}, if and only if it is a critical point on the variety 
    $\mathcal V_J\coloneqq\{\eta\in\mathbb R^{\mathcal S\times\mathcal A}\mid p_j(\eta) = 0 \text{ for }j\in J\}$.
For the sake of notation we write $J=\{1, \dots, m\}$. We can bound the number of critical points in the interior of  the face by the number of critical points of the polynomial optimization problem of optimizing $p_0(\eta)$ subject to $p_1(\eta) = \dots = p_m(\eta)=0$. This number is upper bounded by the algebraic degree of the problem {which controls also the (algebraic) complexity of optimal policies} (see Appendix~\ref{app:subsecalgebraicdegree} for details). 
{Using Theorem~\ref{theo:hdeseffpolpol},} {Proposition \ref{prop:corlinine}} and
an upper bound on the algebraic degree of polynomial optimization by \citet{doi:10.1137/080716670} yields the following result. 
 
\begin{restatable}[]{theo}{corupperboundcriticalpoints}\label{cor:upperboundcriticalpoints}
Consider a POMDP $(\mathcal S, \mathcal O, \mathcal A, \alpha, \beta, r)$, $\gamma\in(0, 1)$, assume that $r$ is generic, that $\beta\in\mathbb R^{\mathcal S\times\mathcal O}$ is invertible, and that Assumption \ref{ass:positivity} holds. For any given $I\subseteq\mathcal{A}\times \mathcal{O}$ consider the following set of policies, which is the relative interior of a face of the policy polytope: 
    \[ \operatorname{int}(F) = \left\{ \pi\in\Delta_\mathcal A^\mathcal O \mid \pi(a|o) = 0 \text{ if and only if } (a, o)\in I \right\} . 
    \]
Let $O\coloneqq\{o\in\mathcal O\mid (a, o)\in I \text{ for some } a\}$ and set $k_o\coloneqq \lvert\{a\mid (a, o)\in I\}\rvert$ as well as $d_o\coloneqq \lvert\{s\mid\beta^{-1}_{os}\ne0\}\rvert$. Then, the number of critical points of the reward function on $\operatorname{int}(F)$ is at most 
\begin{equation}\label{eq:generalupperboundcriticalpoints}
    \left(\prod_{o\in O} d_o^{k_o}\right)\cdot\sum_{\sum_{o\in O} i_o = {l}} \prod_{o\in O} (d_o-1)^{i_o},
\end{equation}
where ${l}=\lvert\mathcal S\rvert(\lvert\mathcal A\rvert-1) - \lvert I\rvert$. If $\alpha$ and $\mu$ are generic, this bound can be refined by computing the polar degrees of multi-homogeneous varieties (see Proposition~\ref{prop:blind} for a special case). {The same bound holds in the mean reward case $\gamma=1$ for $l$ given in Remark~\ref{rem:meanCase}.}
\end{restatable}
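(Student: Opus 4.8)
\textbf{Proof plan for Theorem~\ref{cor:upperboundcriticalpoints}.}
The plan is to transport the optimization problem from the policy polytope to the space of state-action frequencies, where it becomes a polynomial optimization problem whose critical points can be counted via the algebraic degree. By the discussion preceding the statement and the full-rank Jacobian of $\pi\mapsto\eta^\pi$ under Assumption~\ref{ass:positivity}, critical points of $R_\gamma^\mu$ on $\operatorname{int}(F)$ correspond bijectively to critical points of the linear function $p_0(\eta)=\langle r,\eta\rangle$ on the relative interior of the matching face of $\mathcal N_\gamma^{\mu,\beta}$. By Theorem~\ref{theo:geometrydiscstadisc} that face lies on the variety $\mathcal V_J$ cut out by the equations $p_{ao}(\eta)=0$ for $(a,o)\in I$ together with the affine stationarity constraints defining $\mathcal N_\gamma^\mu$ from \plaineqref{eq:explicitdescriptionN}. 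Thus it suffices to bound the number of critical points of $p_0$ constrained to this system.

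The key step is to apply the algebraic-degree bound of \citet{doi:10.1137/080716670} for the critical points of a linear function on a variety defined by multi-homogeneous polynomials. First I would record, from Remark~\ref{rem:defpoline}, that each active constraint $p_{ao}$ is multi-homogeneous in the blocks $(\eta_{sa})_{a\in\mathcal A}$ with multi-degree $\mathds{1}_{S_o}$, i.e.\ it has degree exactly one in each block indexed by a state $s$ compatible with $o$. The relevant combinatorial data are therefore: the number $k_o$ of active inequalities at observation $o$ (one for each $(a,o)\in I$), and the block-degree $d_o=|S_o|=|\{s:\beta^+_{os}\neq0\}|$, which is the number of states compatible with $o$. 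Plugging the multi-degrees $\{\mathds{1}_{S_o}\}_{(a,o)\in I}$ together with the linear objective into the formula from \citet{doi:10.1137/080716670} for the algebraic degree of multi-homogeneous polynomial optimization yields the product-sum expression in \plaineqref{eq:generalupperboundcriticalpoints}: the factor $\prod_o d_o^{k_o}$ arises from the leading degrees of the $k_o$ active constraints in the $d_o$ variable-blocks for each $o$, and the sum $\sum_{\sum i_o=m}\prod_o (d_o-1)^{i_o}$ is the mixed-multidegree contribution counting the remaining $m$ codimensions, where $m=|\mathcal S|(|\mathcal A|-1)-|I|$ is the dimension of the face (equivalently, the number of free constraints after the $|I|$ active ones are imposed).

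The genericity hypotheses on $r$ enter precisely to guarantee that the upper bound from the algebraic degree is attained generically and that the critical points are isolated and nondegenerate, so that the count is finite and the degree formula applies without excess-intersection corrections; invertibility of $\beta$ (Assumption~\ref{ass:invobsmec}) ensures the closed-form expressions for the $p_{ao}$ via $\beta^+=(\beta^T\beta)^{-1}\beta^T$ and that the face lattice of the frequency set matches that of the policy polytope, so that each combinatorial face $\operatorname{int}(F)$ is handled by exactly one variety $\mathcal V_J$. The main obstacle will be verifying that the multi-homogeneous structure recorded in Remark~\ref{rem:defpoline} is exactly the input required by the degree formula of \citet{doi:10.1137/080716670}, and in particular tracking how the affine constraints defining $\mathcal N_\gamma^\mu$ interact with the multi-homogeneous constraints $p_{ao}$ so that the combined codimension count produces exactly the exponent $m$ and not an off-by-the-number-of-stationarity-equations discrepancy; this bookkeeping, rather than any deep new idea, is where the care is needed, and the refinement via polar degrees (Proposition~\ref{prop:blind}) is what one would use when the generic bound is loose.
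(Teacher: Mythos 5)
Your overall route is the same as the paper's: pass to state-action frequencies via the full-rank, injective parametrization, identify the face with a variety cut out by the active $p_{ao}$ plus linear constraints, and invoke the algebraic-degree bound of \citet{doi:10.1137/080716670}. However, there is one conceptual misstep and one deferred step that is actually the substance of the proof.

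First, the formula you invoke does not exist in the form you describe. \citet{doi:10.1137/080716670} bound the algebraic degree in terms of the \emph{total} degrees $d_1,\dots,d_m$ of the constraints, not in terms of multi-degrees; the paper's derivation plugs in $\deg(p_{ao})\le d_o=|S_o|$ (the total degree, i.e.\ the sum of the entries of the multi-degree $\mathds{1}_{S_o}$) into the total-degree bound, adapted to non-complete intersections so that the degree-one constraints (the stationarity equations of \plaineqref{eq:explicitdescriptionN} and the vanishing coordinates) contribute factors of $1$ to the product and force their summation indices to zero. This is exactly why the theorem's closing sentence and the conclusion emphasize that the bound \plaineqref{eq:generalupperboundcriticalpoints} does \emph{not} exploit the multi-homogeneous structure: exploiting it (via polar degrees, as in Proposition~\ref{prop:blind}) is the refinement, and would generally produce a different, smaller number than \plaineqref{eq:generalupperboundcriticalpoints}. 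So ``plugging the multi-degrees into a multi-homogeneous formula'' is not a step you could carry out with the cited reference, even though the numbers you name ($d_o$, $k_o$) are the right inputs to the total-degree formula.

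Second, the bookkeeping you defer is the core of the paper's argument, not a routine check. One must determine exactly which coordinates vanish on the face: under Assumption~\ref{ass:positivity}, $\eta_{sa}=0$ on $\operatorname{int}(H)$ if and only if $\tau(a|s)=\sum_o\beta(o|s)\pi(a|o)=0$, i.e.\ if and only if $(a,o)\in I$ for \emph{every} $o$ with $\beta(o|s)>0$; this yields the index set $J$ of linear coordinate constraints that must be added to the variety. Then the dimension of the resulting variety must be computed, and the paper does this by observing that $\pi\mapsto\eta^\pi$ restricted to $F$ is an injective, full-rank local parametrization, giving $l=\dim\mathcal V=\dim F=|\mathcal S|(|\mathcal A|-1)-|I|=m$ and codimension $c=|\mathcal S|+|I|$, which strictly exceeds the number $k=|I|$ of nonlinear constraints. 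It is precisely this dimension count (not a count of ``remaining codimensions,'' as you phrase it --- the sum in the formula runs over the \emph{dimension} of the variety) that justifies the exponent range $\sum_o i_o=m$ and resolves the ``off-by-the-number-of-stationarity-equations'' worry you raise but leave open.
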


By results from \cite{montufar2017geometry} a POMDP has optimal memoryless stochastic policies with $|\operatorname{supp}\pi(\cdot|o)| \leq l_o$, where $l_o = \lvert\operatorname{supp}\beta(o|\cdot)\rvert\ge1$. Hence, we may restrict attention to optimization over $\mathcal{N}^{\mu,\beta}_\gamma$ with $k=\sum_{o\in\mathcal{O}}k_o$ active inequalities (zeros in the policy), where $k_o= \max\{|\mathcal{A}|-l_o,0\}$. Over these faces of the feasible set, the algebraic degree of the reward maximization problem is upper bounded by $\prod_{o\in\mathcal O} d_o^{k_o}\sum_{i_1+\cdots +i_o=|\mathcal{S}|(|\mathcal{A}|-1) -k}\prod_{o\in\mathcal O} (d_o-1)^{i_o}$ due to Theorem~\ref{cor:upperboundcriticalpoints}. 

{In the special case of MDPs the bound shows that for MDPs only deterministic policies can be critical points of the reward function (see Corollary~\ref{rem:criticalPointsMDPs}).} Setting $I\coloneqq\emptyset$ shows that there are no critical points in the interior of the policy polytope $\Delta_\mathcal A^\mathcal O$. 
This requires the assumption that $\beta$ is invertible (see Appendix~\ref{app:examples:multiplecriticalpoints}).
The bound in Theorem~\ref{cor:upperboundcriticalpoints} neglects the specific algebraic structure of the problem, and can be refined by considering polar degrees of determinantal varieties. This yields the following tighter upper bound for a blind controller with two actions (see Appendix~\ref{app:subsec:blind}). 
\begin{restatable}[Number of critical points in a blind controller]{prop}{propblind}\label{prop:blind}
Let $(\mathcal S, \mathcal O, \mathcal A, \alpha, \beta, r)$ be a POMDP describing a blind controller with two actions, i.e.,  $\mathcal O = \{o\}$ and $\mathcal A = \{a_1, a_2\}$ and let $r, \alpha$ and $\mu$ be generic and let $\gamma\in(0, 1)$. Then the reward function $R^\mu_\gamma$ has at most $\lvert \mathcal S\rvert$ critical points in the interior $\operatorname{int}(\Delta_\mathcal A^\mathcal O) \cong (0, 1)$ of the policy polytope and hence at most $\lvert \mathcal S\rvert+2$ critical points.
\end{restatable}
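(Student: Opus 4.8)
The plan is to collapse everything to a single real variable and then count zeros of a derivative. Since $\mathcal O=\{o\}$ and $\mathcal A=\{a_1,a_2\}$, a policy is determined by $t\coloneqq\pi(a_1\mid o)\in[0,1]$, so $\operatorname{int}(\Delta_\mathcal A^\mathcal O)\cong(0,1)$. Because the controller is blind, the effective policy is state-independent, $(\pi\circ\beta)(a\mid s)=\pi(a\mid o)$, hence $p_\pi$ is the convex combination $t\,\alpha(\cdot\mid\cdot,a_1)+(1-t)\,\alpha(\cdot\mid\cdot,a_2)$ of the two action-transition matrices and is row-stochastic for every $t\in[0,1]$. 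First I would use Theorem~\ref{theo:DegPOMDPs} (with $O=\mathcal O$, so that every state is compatible with $o$) together with the Cramer's-rule formula to write $R^\mu_\gamma$ as $R(t)=N(t)/D(t)$ with $D(t)=\det(I-\gamma p_\pi^T)$. The constant right-eigenvector $\mathds{1}$ of $p_\pi$ splits the factor $(1-\gamma)$ off $D$, so that $\deg D\le|\mathcal S|-1$ while $\deg N\le|\mathcal S|$; and since $\gamma<1$ and $p_\pi(t)$ is stochastic on $[0,1]$, the spectral-radius bound forces $D(t)\neq0$ there. Thus $R$ is real-analytic on $[0,1]$, admits a partial-fraction form $R(t)=a t+b+\sum_{i}e_i/(t-p_i)$ with all poles $p_i$ off $[0,1]$, and its critical points in $(0,1)$ are exactly the zeros of $R'=(N'D-ND')/D^2$.

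The naive estimate is too weak. As $R$ is a degree-$|\mathcal S|$ rational map, its ramification numerator $N'D-ND'$ has degree $2|\mathcal S|-2$, giving only the bound $2|\mathcal S|-2$. To sharpen this to $|\mathcal S|$ I would pass to the geometry of Section~\ref{sec:optimization}: under Assumption~\ref{ass:positivity} the map $\pi\mapsto\eta^\pi$ is injective with full-rank Jacobian, so the critical points of $R$ on $(0,1)$ are in bijection with the critical points of the linear reward $\langle r,\cdot\rangle$ on the feasible curve $\mathcal N_\gamma^{\mu,\beta}$. By Example~\ref{ex:blind-controller} this curve is the determinantal slice $\mathcal N_\gamma^{\mu,\beta}=\mathcal N_\gamma^{\mu}\cap\mathcal D_1$, with $\mathcal D_1$ the variety of rank-one $|\mathcal S|\times2$ matrices, i.e.\ the cone over the Segre variety $\mathbb{P}^{|\mathcal S|-1}\times\mathbb{P}^1$. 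I would then bound the number of critical points of the linear objective on this multi-homogeneous variety by its algebraic degree in the sense of \citet{doi:10.1137/080716670}, computing the relevant polar degree from the multidegree of the Segre variety; genericity of $r$ (and of $\alpha,\mu$) supplies transversality, so the critical points are finite and nondegenerate, and the computation returns $|\mathcal S|$.

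The main obstacle is precisely this last step: explaining why the determinantal (rank-one) structure collapses the generic curve-theoretic count $2|\mathcal S|-2$ down to $|\mathcal S|$, and why the counted critical points relevant to the problem lie in the real chamber $(0,1)$ rather than at complex poles. The algebraic half is the polar-degree computation for $\mathcal D_1$ intersected with the affine space carrying $\mathcal N_\gamma^\mu$ (equivalently, recognizing that the blind-controller variety $\mathcal V$ of Theorem~\ref{theo:geometrydiscstadisc} is here the rank-one determinantal variety). The real half uses the positivity input from the first paragraph—$D$ has no zeros on $[0,1]$—so that none of the spurious critical points attached to the poles of $R$ can fall inside the interval; this is where I expect the delicate case of complex-conjugate poles to require care, and where the polar-degree bound does the work that an elementary Rolle-type argument cannot.

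Once the interior bound $|\mathcal S|$ is established, the two vertices $t=0$ and $t=1$ of $\Delta_\mathcal A^\mathcal O$, corresponding to the two deterministic policies, are boundary critical points and contribute at most two further critical points, yielding the total bound $|\mathcal S|+2$.
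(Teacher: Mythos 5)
Your outline follows exactly the paper's route: reduce to a single variable, discard the rational-degree bound $2\lvert\mathcal S\rvert-2$ as too weak, use injectivity and the full-rank Jacobian under Assumption~\ref{ass:positivity} to identify critical points in $(0,1)$ with critical points of the linear functional $\langle r,\cdot\rangle$ on $\mathcal N_\gamma^{\mu,\beta}=\mathcal N_\gamma^\mu\cap\mathcal D_1$ (Example~\ref{ex:blind-controller}), and then invoke polar degrees of the rank-one determinantal variety. However, there is a genuine gap at precisely the step where all the content lies: you assert that ``the computation returns $\lvert\mathcal S\rvert$'' without performing it, and you yourself name it as the main obstacle. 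This computation is the entire substance of the paper's proof. The paper first fixes the setup you leave implicit --- $\mathcal U=\{\eta\mid \eta^Tw_\gamma^s=(1-\gamma)\mu_s\}$ has dimension $\lvert\mathcal S\rvert\lvert\mathcal A\rvert-\lvert\mathcal S\rvert=\lvert\mathcal S\rvert$ (since $\lvert\mathcal A\rvert=2$), genericity of $\alpha,\mu$ puts $\mathcal U$ in general position, and genericity of $r$ makes the relevant count the $\lvert\mathcal S\rvert$-th polar degree $\delta_{\lvert\mathcal S\rvert}(\mathcal D_1)$ --- and then actually evaluates it in two ways: (i) by explicitly summing the three binomial terms in the polar-degree formula of \citet{Sodomaco2020} as presented by \citet{CELIK2021855}, which collapses to $\delta_{\lvert\mathcal S\rvert}(\mathcal D_1)=\lvert\mathcal S\rvert$; and (ii) structurally, using that for $\lvert\mathcal S\rvert\times 2$ matrices $\mathcal D_1$ is self-dual \citep{Draisma2016}, that polar degrees vanish above index $\dim(\mathcal D_1^\ast)-1$, and that the largest nonzero polar degree equals $\operatorname{degree}(\mathcal D_1^\ast)=\operatorname{degree}(\mathcal D_1)=\lvert\mathcal S\rvert$ \citep{spaenlehauer2012solving}. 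Without one of these arguments your proof does not establish the bound.

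A secondary point: your third paragraph worries about ``why the counted critical points lie in the real chamber $(0,1)$ rather than at complex poles'' and anticipates a delicate case of complex-conjugate poles. This concern is misplaced, and no such argument is needed (nor does the paper make one). The polar degree bounds the number of \emph{complex} solutions of the KKT equations on all of $\mathcal U\cap\mathcal D_1$; the real critical points corresponding to $p\in(0,1)$, i.e.\ those lying in $(0,\infty)^{\mathcal S\times\mathcal A}\cap\mathcal U\cap\mathcal D_1$, are simply a subset of these, so the upper bound transfers immediately. The positivity of $D(t)$ on $[0,1]$ is only needed to ensure the correspondence between policies and state-action frequencies is well defined, not to exclude spurious critical points; indeed the paper notes the resulting bound need not be sharp for exactly this reason.
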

In Appendix~\ref{app:examples:multiplecriticalpoints} we provide examples of blind controllers which have several critical points in the interior $(0, 1)\cong\operatorname{int}(\Delta_\mathcal A^\mathcal O)$ and strict maxima at the two endpoints of the interval $[0, 1]\cong\Delta_\mathcal A^\mathcal O$ respectively. 
Such points are called smooth and non-smooth critical points respectively. 

\section{Conclusion}
We described geometric and algebraic properties of POMDPs and related the rational degree of the discounted state-action frequencies and the expected cumulative reward function to the degree of observability. 
We described the set of feasible state-action frequencies as a basic semialgebraic set and computed explicit expressions for the defining polynomials. In particular, this yields a polynomial programming formulation of POMDPs extending the linear programming formulation of MDPs. Based on this we use polynomial optimization theory to bound the number of critical points of the reward function over the polytope of memoryless stochastic policies. 
Our analysis also yields insights into the optimization landscape, such as the number of connected components of superlevel sets of the expected reward. {Finally, we use a navigation problem in a grid world to demonstrate that the polynomial programming formulation can offer a computationally feasible approach to the reward maximization problem. }

Our analysis focuses on infinite-horizon problems and memoryless policies with finite state, observation, and action spaces. Continuous spaces are interesting avenues, since they occur in real world application like robotics. 
The general bound on the number of critical points in Theorem~\ref{cor:upperboundcriticalpoints} does not exploit the special multi-homogeneous structure of the problem, which could allow for tighter bounds as illustrated in Proposition \ref{prop:blind} for blind controllers. 
Computing polar degrees is a challenging problem that remains to be studied using more sophisticated algebraic tools. 
Possible  extensions  of  our  work  include  the  generalization  to  policies  with finite memories as sketched in Appendix~\ref{app:subsec:finitememory}.  Further, we believe that it is interesting to explore to what extent our results can be used to identify policy classes guaranteed to contain maximizers of the reward in POMDPs. 

\subsubsection*{Acknowledgments}
The authors thank Alex Tong Lin and Thomas Merkh for valuable discussions on POMDPs, Bernd Sturmfels for sharing his expertise on algebraic degrees and Mareike Dressler, Marina Garrote-L\'{o}pez and Kemal Rose for their discussions on polynomial optimization. The authors acknowledge support by the ERC under the European Union’s Horizon 2020 research and innovation programme (grant agreement no 757983). JM received support from the International Max Planck Research School for Mathematics in the Sciences and the Evangelisches Studienwerk Villigst e.V..

\bibliography{iclr2022_conference}
\bibliographystyle{iclr2022_conference}

\newpage 
\appendix
\section*{Appendix}
The Sections~\ref{sec:app:1}--\ref{app:sec:optimization} of the Appendix correspond to the Sections~\ref{sec:preliminaries}--\ref{sec:optimization} of the main body. 
We present the postponed proofs and elaborate various remarks in more detail. 
In Appendix~\ref{app:sec:extensions} we discuss possible extensions of our results to memory policies and polynomial POMDPs. In Appendix~\ref{app:sec:plots} we provide details on the example plotted in Figure~\ref{fig:range4} and provide a plot of a three dimensional state-action frequency set. 

\startcontents[sections]
\printcontents[sections]{l}{1}{\setcounter{tocdepth}{2}}

\section{Details on the Preliminaries}\label{sec:app:1}
We elaborate the proofs that where ommited or only sketched in the main body. 
\subsection{Partially observable Markov decision processes}\label{sec:app:1.1}
 
The statement of Proposition~\ref{prop:propdis} can found in the work by \cite{howard1960dynamic} and we quickly sketch the proof therein. In order to show that the expected state-action frequencies exist without any assumptions, we recall that for a (row or column) stochastic matrix $P$, the \emph{Cesàro mean} is defined by
    \[ P^\ast\coloneqq \lim_{T\to\infty} \frac1T\sum_{t=0}^{T-1} P^t \]
and exists without any assumptions. Further, $P^\ast$ is the projection onto the subspace of stationary distribution \citep{doob1953stochastic}. For $\gamma\in(0, 1)$, the matrix
    \[ P^\ast_\gamma \coloneqq (1-\gamma) \sum_{t=0}^\infty \gamma^tP^t = (1-\gamma)(I-\gamma P)^{-1} \]
is known as the \emph{Abel mean} of $P$, where we used the Neumann series. By the Tauberian theorem, it holds that $P^\ast_\gamma \to P^\ast$ for $\gamma\nearrow1$ \citep{gillette20169, HUNTER198324}.
\proppropdis*
\begin{proof}
The existence of the state-action frequencies as well as the continuity with respect to the discount parameter follows directly from the general theory since
    \[ \eta^{\pi, \mu}_\gamma = (P_\pi^T)^\ast_\gamma(\mu\ast(\pi\circ\beta)) \]
for $\gamma\in(0, 1)$ and $\eta^{\pi, \mu}_1 = (P_\pi^T)^\ast(\mu\ast(\pi\circ\beta))$.
With an analogue argument, the statement follows for the state frequencies and for the reward.
\end{proof}
For $\gamma=1$ we work under the following standard assumption in the (PO)MDP literature\footnote{Assumption~\ref{as:ergodicity} is weaker than ergodicity and is satisfied whenever the Markov chain with transition kernel \(P^\pi\) is irreducible and aperiodic for every policy \(\pi\), e.g., when the transition kernel satisfies \(\alpha>0\). For $\gamma\in (0, 1)$ the assumption is not required, since the discounted stationary distributions are always unique since $I-\gamma P_\pi$ is invertible because the spectral norm of $P_\pi$ is one.}. 
\asergodicity*
The following proposition shows in particular that for any initial distribution \(\mu\), the infinite time horizon state-action frequency $\eta^{\pi, \mu}_1$ is the unique stationary distribution of $P_\pi$. 
\propbellmanflow*
\begin{proof}
By the general theory of Cesàro means, $(P_\pi^T)^\ast$ projects onto the space of stationary distributions and hence the $\eta^{\pi, \mu}_1 = (P_\pi^T)^\ast(\mu\ast(\pi\circ\beta))$ is stationary. Hence, by Assumption \ref{as:ergodicity}, $\eta^{\pi, \mu}_1$ is the unique stationary distribution. For $\gamma\in(0, 1)$ we have
    \[ \eta^{\pi, \mu}_\gamma = (P_\pi^T)^\ast_\gamma(\mu\ast(\pi\circ\beta)) = (I-\gamma P_\pi^T)^{-1}(\mu\ast(\pi\circ\beta)), \]
which yields the claim. For the state distributions $\rho^{\pi, \mu}_\gamma$ the claim follows analogously or by marginalisation.
\end{proof}

Since the state-action frequencies satisfy this generalized stationarity condition, we sometimes refer to them as \emph{discounted stationary distributions}.

\subsection{Semialgebraic sets and their face lattices}\label{sec:app:1.2}
We recall the definition of semialgebraic sets, which are fundamental objects in real algebraic geometry \citep{bochnak2013real}. A \emph{basic (closed) semialgebraic} set \(A\) is a subset of \(\mathbb R^d\) defined by finitely many polynomial inequalities as 
    \[ A = \{x\in\mathbb R^d\mid p_i(x)\ge0\text{ for }i\in I\}, \]
where $I$ is a finite index set and the $p_i$ are polynomials. A \emph{semialgebraic} set is a finite union of basic (not necessarily closed) semialgebraic sets, and a function is called \emph{semialgebraic} if its graph is semialgebraic. By the Tarski-Seidenberg theorem the range of a semialgebraic function is semialgebraic. A simple algebraic set has a {lattice} associated to it, induced by the set of active inequalities. More precisely, for a subset $J\subseteq I$ we set $F_J\coloneqq\{x\in A\mid p_j(x)=0 \text{ for } j\in J\}$ and endow the set $\mathcal F\coloneqq\{F_J\mid J\subseteq I\}$ with the partial order of inclusion. We call the elements of $\mathcal F$ the \emph{faces} of $A$. The faces described above are a generalization of the faces of a classical polytope and a special case of the faces of an abstract polytope and we refer to  \cite{mcmullen2002abstract} for more details. Next, we want to endow this partially ordered set with more structure. A lattice $\mathcal F$ carries two operations, the \emph{join} $\land$ and the \emph{meet} $\lor$, which satisfy the absortion laws
$F\lor(F\land G) = F$ and $F\land(F\lor G) = F$ for all $F, G\in\mathcal F$. In the face lattice of a basic semialgebraic set, the join and meet are given by
    \[ F\land G \coloneqq F\cap G \quad \text{and}\quad F\lor G\coloneqq \bigcap_{H\in\mathcal F \colon  F, G\subseteq H} H. \]
A \emph{morphism} between two lattices $\mathcal F$ and $\mathcal G$ is a mapping $\varphi\colon\mathcal F\to\mathcal G$ that respects the join and the meet, i.e.,  such that $\varphi(F\land G) = \varphi(F)\land\varphi(G)$ and $\varphi(F\lor G) = \varphi(F)\lor\varphi(G)$ for all $F, G\in\mathcal F$. A lattice \emph{isomorphism} is a bijective lattice morphism where the inverse is also a morphism. We say that two basic semialgebraic sets with isomorphic face lattice are \emph{combinatorially equivalent}.

\section{Details on the Parametrizaton of Discounted State-Action Frequencies}\label{app:sec:paradistributions}
\subsection{The degree of determinantal polynomials}\label{app:sec:2.1}

Determinantal representation of polynomials play an important role in convex geometry \citep[see for example][]{helton2007linear, netzer2012polynomials}, but often the emphasis is put on symmetric matrices. We adapt those arguments to the general case and present them here. We call \(p\) a \emph{determinantal polynomial} if it admits a representation 
\begin{equation}\label{eq:detPol}
    p(x) = \det\left(A_0 + \sum_{i=1}^m x_iA_i\right) \quad \text{for all }x\in\mathbb R^m,
\end{equation}
for some \(A_0, \dots, A_m\in\mathbb R^{n\times n}\). Let us use the notations
    \[ A(x) \coloneqq A_0 + \sum_{i=1}^m x_iA_i \quad \text{and } B(x) \coloneqq \sum_{i=1}^m x_iA_i. \]

\begin{proposition}[Degree of monic univariate determinantal polynomials]\label{prop:degreeSimDetPol}
Let \(A, B\in\mathbb R^{n\times n}\) and \(A\) be invertible and let \(\lambda_1, \dots, \lambda_n\in\mathbb C\) denote the eigenvalues of \(A^{-1}B\) if repeated according to their algebraic multiplicity. Then, 
    \[ p\colon \mathbb R\to\mathbb R, \quad t \mapsto \det(A+t B) \]
is a polynomial of degree 
    \[ \deg(p) = \left\lvert \big\{ j\in\{1, \dots, n\} \mid \lambda_j\ne0\big\} \right\rvert \le \operatorname{rank}(B). \]
The roots of $p$ are given by $\{-\lambda_j^{-1}\mid j\in J\}\subseteq\mathbb C$. If further \(A^{-1}B\) is symmetric, then we have \(\deg(p) = \operatorname{rank}(B)\). 
\end{proposition}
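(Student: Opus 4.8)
The plan is to factor out the invertible matrix $A$ and reduce the determinant to a product over the eigenvalues of $A^{-1}B$. First I would use multiplicativity of the determinant together with the invertibility of $A$ to write
\[ p(t) = \det(A + tB) = \det(A)\,\det(I + tA^{-1}B) \quad \text{for all } t\in\mathbb R. \]
Since the eigenvalues of $I + tA^{-1}B$ are precisely $1 + t\lambda_1, \dots, 1 + t\lambda_n$, the determinant factors as $\det(I + tA^{-1}B) = \prod_{j=1}^n(1 + t\lambda_j)$, giving $p(t) = \det(A)\prod_{j=1}^n(1 + t\lambda_j)$.

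From this factorization the degree and root statements are immediate. The factors with $\lambda_j = 0$ equal $1$ and do not affect the degree, while each factor with $\lambda_j \neq 0$ is linear with nonzero leading coefficient. As $\det(A) \neq 0$, the leading coefficient of $p$ is $\det(A)\prod_{\lambda_j \neq 0}\lambda_j \neq 0$, so $\deg(p) = \lvert\{j : \lambda_j \neq 0\}\rvert$; the roots come from setting each nonzero factor to zero, $1 + t\lambda_j = 0 \iff t = -\lambda_j^{-1}$.

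For the inequality $\deg(p) \leq \operatorname{rank}(B)$, the step is to bound the number of nonzero eigenvalues of $M \coloneqq A^{-1}B$ by its rank. Since $A^{-1}$ is invertible, $\operatorname{rank}(M) = \operatorname{rank}(B)$, and the algebraic multiplicity of the eigenvalue $0$ is at least its geometric multiplicity $\dim\ker(M) = n - \operatorname{rank}(M)$. Hence the number of nonzero eigenvalues, counted with algebraic multiplicity, is at most $\operatorname{rank}(M) = \operatorname{rank}(B)$.

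I expect the symmetric case to be the only step requiring real care, precisely because it is where this inequality becomes an equality. If $M = A^{-1}B$ is symmetric, the spectral theorem guarantees it is diagonalizable, so the algebraic and geometric multiplicities of every eigenvalue---in particular of $0$---coincide. Then the algebraic multiplicity of $0$ is exactly $n - \operatorname{rank}(M)$, the number of nonzero eigenvalues equals $\operatorname{rank}(M) = \operatorname{rank}(B)$, and so $\deg(p) = \operatorname{rank}(B)$. The recurring subtlety to watch is the gap between algebraic and geometric multiplicity: it is diagonalizability, available here thanks to symmetry, that closes that gap and upgrades the general inequality to an equality.
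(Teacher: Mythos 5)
Your proof is correct and takes essentially the same route as the paper: factor out \(\det(A)\), use the eigenvalue factorization \(\det(I + tA^{-1}B) = \prod_{j}(1 + t\lambda_j)\), and deduce the degree, the roots, and the rank bound via the gap between algebraic and geometric multiplicity of the zero eigenvalue, which symmetry closes. The only cosmetic difference is that the paper reaches the factorization by rescaling and evaluating the characteristic polynomial of \(A^{-1}B\) at \(-t^{-1}\) (requiring \(t\neq 0\)), whereas you read off the eigenvalues of \(I + tA^{-1}B\) directly, which is marginally cleaner.
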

\begin{proof}
Let \(J\subseteq\{1, \dots, n\}\) denote the set of indices \(j\) such that \(\lambda_j\ne0\). For \(x\ne0\) we have\footnote{Here, \(\chi_C(\lambda) = \det(C-\lambda I)\) denotes the characteriztic polynomial of a matrix \(C\).} 
\begin{align*}
    p(t) & = \det(A)\det(I+t A^{-1}B) = x^n \det(A)\det(A^{-1}B + t^{-1}I) = x^n\det(A) \chi_{A^{-1}B}(-t^{-1}) 
    \\ & = t^n\prod_{i=1}^n (-t^{-1} - \lambda_i) = (-1)^{n-\lvert J \rvert} \cdot \prod_{j\in J}(-\lambda_j) \cdot \prod_{j\in J} \left( t + \lambda_j^{-1} \right),
\end{align*}
which is a polynomial of degree \(\lvert J \rvert\). 
Note that \(\lvert J \rvert\) is upper bounded by the complex rank of \(A^{-1}B\). Since the rank over \(\mathbb C\) and \(\mathbb R\) agree for a real matrix, we have \(\deg(p) \le \operatorname{rank}(A^{-1}B) = \operatorname{rank}(B)\). Assume now that \(A^{-1}B\) is symmetric, then the rank of \(A^{-1}B\) coincides with the number \(\lvert J\rvert\) of non zero eigenvalues. Further, the rank of \(B\) and \(A^{-1}B\) is the same.
\end{proof}

\begin{remark}
    Note that the degree of \(p\) can be lower than \(\operatorname{rank}(B)\), for example if 
    \[ A = I \quad \text{and } B = \begin{pmatrix} 1 & -1 \\ 1 & -1 \end{pmatrix} = \begin{pmatrix} 1 \\ 1 \end{pmatrix} \begin{pmatrix} 1 & -1 \end{pmatrix} .\]
    Then we have \(\operatorname{rank}(B) = 1\), but
    \[ p(\lambda) = \det\begin{pmatrix} 1+\lambda & - \lambda \\ \lambda & 1-\lambda \end{pmatrix} = (1+\lambda)(1-\lambda) + \lambda^2 = 1 \]
    and therefore \(\deg(p) = 0\). Note that in this case \(A^{-1}B = B\) has no non-zero eigenvalues. 
\end{remark}

Now we show that the degree of \(p\) is still bounded by \(\operatorname{rank}(B)\) even if \(A\) is not invertible. However, we loose an explicit description of the degree in this case.

\begin{proposition}[Degree of univariate determinantal polynomials]\label{prop:degreeSimDetPolTwo}
Let \(A, B\in\mathbb R^{n\times n}\) and consider the polynomial
        \[ p\colon \mathbb R\to\mathbb R, \quad t \mapsto \det(A+t B) . \]
Then either \(p = 0\) or if \(p(t_0) \ne0\) and \(\lambda_1, \dots, \lambda_n\in\mathbb C\) denote the eigenvalues of \((A+t_0 B)^{-1} B\) repeated according to their algebraic multiplicity, then \(p\) has degree
    \[ \deg(p) = \left\lvert \big\{ j\in\{1, \dots, n\} \mid \lambda_j\ne0\big\} \right\rvert \le \operatorname{rank}(B). \]
In particular, it always holds that \(\deg(p)\le \operatorname{rank}(B)\).
\end{proposition}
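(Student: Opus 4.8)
The plan is to reduce to the already-established invertible case, Proposition~\ref{prop:degreeSimDetPol}, by translating the variable \(t\). First I would observe that \(t\mapsto\det(A+tB)\) is genuinely a polynomial in \(t\): each entry of \(A+tB\) is affine in \(t\), and the determinant is a polynomial in the matrix entries. This immediately furnishes the dichotomy. Either \(p\equiv0\), or \(p\) is a nonzero univariate polynomial, in which case it has only finitely many roots, so a non-root \(t_0\) exists with \(p(t_0)=\det(A+t_0B)\ne0\); equivalently \(A+t_0B\) is invertible. Thus the hypothesis "\(p(t_0)\ne0\)" in the statement can always be met whenever \(p\not\equiv0\).

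Next I would set \(\tilde A\coloneqq A+t_0B\) and write \(A+tB=\tilde A+(t-t_0)B\). Substituting \(s\coloneqq t-t_0\) gives \(p(t)=\det(\tilde A+sB)=:q(s)\), where now \(\tilde A\) is invertible. Applying Proposition~\ref{prop:degreeSimDetPol} to the pair \((\tilde A,B)\) shows that \(q\) has degree equal to the number of nonzero eigenvalues of \(\tilde A^{-1}B=(A+t_0B)^{-1}B\), and that this number is at most \(\operatorname{rank}(B)\). Since \(p\) is obtained from \(q\) by the affine change of argument \(t\mapsto t-t_0\), which preserves the degree of a univariate polynomial, we conclude \(\deg(p)=\deg(q)\). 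This yields both the claimed formula \(\deg(p)=\lvert\{j\mid\lambda_j\ne0\}\rvert\) and the bound \(\deg(p)\le\operatorname{rank}(B)\). The bound then holds in all cases: in the degenerate case \(p\equiv0\) it is vacuous (with the convention \(\deg(0)=-\infty\)), and otherwise it follows from the reduction.

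There is no serious obstacle here; the substantive content sits entirely in Proposition~\ref{prop:degreeSimDetPol}, and the only points requiring care are that a non-root \(t_0\) exists (immediate for a nonzero polynomial) and that degree is invariant under shifting the argument (standard). The one genuinely new feature relative to the invertible case is that the shift of base point is not canonical: the eigenvalues of \((A+t_0B)^{-1}B\) themselves depend on the choice of \(t_0\), and the clean root description \(\{-\lambda_j^{-1}\}\) from the monic case is lost. Accordingly I would emphasize that it is only the \emph{count} of nonzero eigenvalues that is intrinsic—being forced to equal \(\deg(p)\), it is independent of the admissible \(t_0\)—and I would deliberately refrain from claiming any explicit description of the roots, matching the weaker conclusion the statement advertises.
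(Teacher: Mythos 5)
Your proof is correct and follows essentially the same route as the paper's: both reduce to Proposition~\ref{prop:degreeSimDetPol} by choosing a non-root \(t_0\), setting \(C = A + t_0 B\) (invertible), and observing that \(p(t) = q(t-t_0)\) where \(q(s) = \det(C + sB)\), so the degree is preserved under the shift. Your additional remarks on the existence of \(t_0\) and the non-canonicity of the eigenvalue data are sound elaborations of what the paper leaves implicit.
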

\begin{proof}
Let without loss of generality \(p(t_0)\ne0\), then \(C = A+t_0 B\) is invertible. By Proposition \ref{prop:degreeSimDetPol}, the degree of \(q(t) = \det(C + t B)\) is precisely \(\lvert \{ j \mid \lambda_j \ne0\}\rvert\). Noting that \(p(t) = q(t-t_0)\) yields the claim.
\end{proof}

The following result generalizes Proposition \ref{prop:degreeSimDetPolTwo} to multivariate determinantal polynomials.

\begin{proposition}[Degree of determinantal polynomials]\label{prop:degreemultdetpol}
Let \(p\colon\mathbb R^m\to\mathbb R\) be a determinantal polynomial with the representation \plaineqref{eq:detPol}. Then 
    \[ \deg(p) \le \max\big\{\operatorname{rank}(B(x)) \mid x\in\mathbb R^m \big\}. \]
\end{proposition}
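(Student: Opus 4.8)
The plan is to reduce the multivariate bound to the univariate statement of Proposition~\ref{prop:degreeSimDetPolTwo} by restricting $p$ to lines through the origin. First I would recall that the total degree of a polynomial can be read off from its behavior along a suitable line: writing $p = \sum_{k=0}^d p_k$ for its decomposition into homogeneous components with $p_d\neq0$ and $d=\deg(p)$, the restriction to the ray $t\mapsto tv$ gives $p(tv) = \sum_{k=0}^d p_k(v)\,t^k$, whose coefficient of $t^d$ is exactly $p_d(v)$. Hence $\deg_t p(tv) = d$ precisely when $p_d(v)\neq0$, and since $p_d$ is a nonzero polynomial over $\mathbb R$ there exists such a $v$. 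This yields the identity $\deg(p) = \max_{v\in\mathbb R^m}\deg_t\, p(tv)$.

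Next I would observe that each such restriction is itself a univariate determinantal polynomial. Indeed, using $A(tv) = A_0 + tB(v)$ together with the linearity of $B$, we have $p(tv) = \det(A_0 + tB(v))$, which is of the form treated in Proposition~\ref{prop:degreeSimDetPolTwo} with constant matrix $A_0$ and linear coefficient matrix $B(v)$. That proposition---crucially in its form that does \emph{not} assume $A_0$ invertible---gives $\deg_t p(tv) \le \operatorname{rank}(B(v))$.

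Combining the two steps, I would choose $v$ attaining the maximum in the degree identity; then $\deg(p) = \deg_t p(tv) \le \operatorname{rank}(B(v)) \le \max_{x\in\mathbb R^m}\operatorname{rank}(B(x))$, which is the claim. Because $B$ is linear, rescaling $v$ does not change its rank, so restricting to lines through the origin loses nothing relative to the maximum over all of $\mathbb R^m$.

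The only delicate point is the first step, the equality $\deg(p) = \max_v \deg_t p(tv)$, which relies on the leading form $p_d$ not vanishing identically and hence on working over the infinite field $\mathbb R$; everything else is a direct application of the already-established univariate result. I expect this to be the main (and only mild) obstacle, and it is handled cleanly by the homogeneous-component argument above.
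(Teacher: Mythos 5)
Your proposal is correct and follows essentially the same route as the paper: the paper also restricts $p$ to a line $t\mapsto p(tx)=\det(A_0+tB(x))$, applies the univariate bound of Proposition~\ref{prop:degreeSimDetPolTwo} (which does not need $A_0$ invertible), and justifies that some direction realizes the full degree by splitting $p$ into its leading homogeneous form plus lower-order terms and choosing $x$ where the leading form does not vanish. Your homogeneous-component argument is exactly that lemma, so there is nothing to add.
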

\begin{proof}
Let us fix \(x\in\mathbb R^m\) and for \(t\in\mathbb R\) set \(f(t)\coloneqq p(t x) = \det(A_0+t B(x))\). By the next proposition it suffices to show that \(\deg(f) \le \operatorname{rank}(B(x))\). However, this is precisely the statement of Proposition \ref{prop:degreeSimDetPolTwo}.
\end{proof}
\begin{proposition}[Degree of polynomials]
Let \(p\colon\mathbb R^n\to\mathbb R\) be a polynomial. Then there is a direction \(x\in\mathbb R^n\) such that \(t \mapsto p(t x)\) is a polynomial of degree \(\deg(p)\). 
{Moreover, for any $x\in\mathbb{R}^n$, the univariate polynomial \(t \mapsto p(t x)\) has degree at most \(\deg(p)\).} 
\end{proposition}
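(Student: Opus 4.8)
The plan is to reduce everything to the behavior of the homogeneous top-degree part of $p$ along lines through the origin. Write $d \coloneqq \deg(p)$ and decompose $p$ into its homogeneous components, $p = \sum_{k=0}^{d} p_k$, where $p_k$ collects all monomials of total degree $k$ and, by definition of the degree, $p_d \not\equiv 0$. The key observation is that each $p_k$ is homogeneous of degree $k$, so that $p_k(tx) = t^k p_k(x)$ for every $t\in\mathbb R$ and every $x\in\mathbb R^n$.

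First I would establish the "moreover" claim, which is the easy direction. Fixing an arbitrary $x\in\mathbb R^n$ and substituting, homogeneity gives
\[
    p(tx) = \sum_{k=0}^{d} p_k(tx) = \sum_{k=0}^{d} p_k(x)\, t^k,
\]
so as a univariate polynomial in $t$ the coefficient of $t^k$ is exactly the scalar $p_k(x)$. Since no power of $t$ higher than $d$ appears, the degree of $t\mapsto p(tx)$ is at most $d$, with equality precisely when $p_d(x)\ne 0$.

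For the existence of a good direction, it then suffices to produce a single $x$ with $p_d(x)\ne 0$; for any such $x$ the displayed expansion has a nonzero leading coefficient $p_d(x)$ in front of $t^d$, so $t\mapsto p(tx)$ has degree exactly $d$. This is where the only substantive point enters: $p_d$ is a nonzero polynomial on $\mathbb R^n$, and a nonzero polynomial cannot vanish identically on all of $\mathbb R^n$ because $\mathbb R$ is infinite. I would invoke this standard fact (proved, e.g., by induction on $n$: a nonzero univariate polynomial has finitely many roots, and one peels off one variable at a time). Hence there exists $x\in\mathbb R^n$ with $p_d(x)\ne 0$, completing the argument.

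The only place requiring care is the nonvanishing statement for $p_d$, which is really the single nontrivial ingredient; everything else is bookkeeping with the homogeneous decomposition. Since $\mathbb R$ is infinite this nonvanishing is automatic, so I do not expect any genuine obstacle here.
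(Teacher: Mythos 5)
Your proof is correct and follows essentially the same route as the paper's: both split off the top-degree homogeneous part, use homogeneity to get the $t^d$ leading term, and pick a direction where that part does not vanish. Your version is slightly more explicit (full homogeneous decomposition and an explicit justification of the non-vanishing fact, which the paper leaves implicit), but it is the same argument.
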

\begin{proof}
Let without loss of generality \(p\) be non trivial. Decompose \(p\) into its leading and lower order terms \(p = p_1+p_2\) and choose \(x\in\mathbb R^n\) such that \(p_1(x)\ne0\). Let \(k\coloneqq\deg(p)\), then we have \(p_1(t x) = t^k p_1(x)\) for all \(\mu\in\mathbb R\). Since the degree of \(t\mapsto p_2(t x)\) is at most \(k-1\), the degree of \(t\mapsto p(t x) = p_1(t x) + p_2(t x)\) is \(k\).
\end{proof}
\begin{remark}\label{app:rem:exactdegree}
Analogue to the univariate case, it is possible to give a precise statement on the degree, which is the following. If \(p\) is not vanishing and \(x_0\in\mathbb R^m\) is such that \(p(x_0)\ne0\), then \(A(x_0)\in\mathbb R^{n\times n}\) is invertible. Writing \(\lambda_1(x), \dots, \lambda_n(x)\in\mathbb C\) for the eigenvalues of \(A(x_0)^{-1}B(x)\) and \(J(x)\) for the indices \(j\) such that \(\lambda_j(x)\ne0\) we obtain
    \[ \deg(p) = \max\big\{ \lvert J(x) \rvert \mid x\in\mathbb R^m \big\}. \]
\end{remark}

\subsection{The degree of POMDPs}\label{app:sec:2.2}
The general bounds on the degree of determinantal polynomials directly implies Theorem \ref{theo:DegPOMDPs}, which we state again for the sake of convenience here.

\theoDegPOMDPs*

In fact, the results from the preceding pararaph imply the following sharper versions.

\begin{restatable}[Degree of discounted state-action frequencies of POMDPs]{theo}{propDegDisStaDis}\label{app:prop:DegDisStaDis}
Let \((\mathcal S, \mathcal O, \mathcal A, \alpha, \beta, r)\) be a POMDP, \(\mu\in\Delta_\mathcal S\) be an initial distribution and \(\gamma\in(0, 1)\) a discount factor. Then the discounted state-action distributions can be expressed as
\begin{equation}\label{app:eq:cramer}
    \eta_\gamma^{\pi, \mu}(s, a) = \frac{q_{s, a}(\pi)}{q(\pi)} \quad 
    \text{for every } \pi\in\Delta_\mathcal A^\mathcal O \text{ and } s\in\mathcal S,
\end{equation}
where
    \[ q_{s, a}(\pi) \coloneqq (\pi\circ\beta)(a|s) \cdot(1-\gamma) \cdot \det(I-\gamma p_\pi^T)^{\mu}_s \quad \text{and}\quad  q(\pi) \coloneqq \det(I-\gamma p_\pi^T) \]
are polynomials in the entries of the policy. Further, if \(q_{s, a}\) and \(q\) are restricted to the subset \(\Pi\subseteq\Delta_\mathcal A^\mathcal O\) of policies which agree with a fixed policy \(\pi_0\) on all states outside of \(O\subseteq\mathcal O\) and if we set $S\coloneqq \{s\in\mathcal S\mid \beta(o|s)>0\text{ for some } o\in O\}$, then they have degree at most 
\begin{equation}\label{eq:degofqsa}
    \deg(q_{s, a}) \le \max_{\pi\in\Pi} \operatorname{rank}(p_\pi^T-p_{\pi_0}^T)^{0}_s + \mathds{1}_S(s) \le \left\lvert S\right\rvert
\end{equation}
and 
\begin{equation}\label{eq:degofq}
    \deg(q) \le \max_{\pi\in\Pi} \operatorname{rank}(p_\pi^T-p_{\pi_0}^T) \le \left\lvert S\right\rvert.
\end{equation}
\end{restatable}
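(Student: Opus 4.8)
The plan is to start from the Cramer's-rule representation already recorded in the main text and then convert every degree question into a rank question via the determinantal degree bound of Proposition~\ref{prop:degreemultdetpol}. First I would establish \plaineqref{app:eq:cramer}. Since $\rho^{\pi,\mu}_\gamma$ is the unique solution of the linear system $(I-\gamma p_\pi^T)\rho=(1-\gamma)\mu$ from Proposition~\ref{prop:bellmanflow}, Cramer's rule writes $\rho^{\pi,\mu}_\gamma(s)$ as the quotient of $\det M_s$ and $q=\det(I-\gamma p_\pi^T)$, where $M_s$ is obtained from $I-\gamma p_\pi^T$ by overwriting the coordinate line indexed by $s$ with the constant vector $\mu$. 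Multiplying by $(\pi\circ\beta)(a\mid s)$ and using $\eta^{\pi,\mu}_\gamma(s,a)=(\pi\circ\beta)(a\mid s)\,\rho^{\pi,\mu}_\gamma(s)$ produces exactly $q_{s,a}$ and $q$. Both are polynomials in the entries of $\pi$ because $p_\pi$ depends linearly on $\pi$ and the determinant is polynomial; that $\gamma\in(0,1)$ forces $I-\gamma p_\pi^T$ to be invertible guarantees $q\not\equiv 0$, so the quotient is well defined.

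Next I would set up the determinantal picture on the restricted set $\Pi$. Writing $I-\gamma p_\pi^T=(I-\gamma p_{\pi_0}^T)-\gamma(p_\pi^T-p_{\pi_0}^T)$, the second summand is linear in the free entries of $\pi$, so $q$ is a determinantal polynomial with linear part $B(\pi)=-\gamma(p_\pi^T-p_{\pi_0}^T)$, and Proposition~\ref{prop:degreemultdetpol} gives $\deg(q)\le\max_{\pi\in\Pi}\operatorname{rank}(p_\pi^T-p_{\pi_0}^T)$. The structural observation driving the bound is that the transition line of $p_\pi$ associated with a state $s'$, namely $p_\pi(\cdot\mid s')$, depends only on $(\pi\circ\beta)(\cdot\mid s')$, and this coincides with $(\pi_0\circ\beta)(\cdot\mid s')$ whenever $s'\notin S$ (no observation in $O$ is compatible with $s'$). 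Hence $p_\pi^T-p_{\pi_0}^T$ is supported on the coordinate lines indexed by $S$, so its rank is at most $\lvert S\rvert$, which proves \plaineqref{eq:degofq}.

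Finally I would treat $q_{s,a}=(\pi\circ\beta)(a\mid s)\cdot(1-\gamma)\cdot\det M_s$. The prefactor $(\pi\circ\beta)(a\mid s)$ is linear and is nonconstant on $\Pi$ exactly when $s\in S$, contributing degree $\mathds{1}_S(s)$. For the determinant I would again invoke Proposition~\ref{prop:degreemultdetpol}: because the $s$-line of $M_s$ is the constant vector $\mu$, the linear part of $\det M_s$ is $-\gamma(p_\pi^T-p_{\pi_0}^T)^{0}_s$, the difference matrix with its $s$-line set to zero, whence $\deg(\det M_s)\le\max_{\pi\in\Pi}\operatorname{rank}(p_\pi^T-p_{\pi_0}^T)^{0}_s$. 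Adding the two contributions yields the first inequality of \plaineqref{eq:degofqsa}. The heart of the argument, and the step I expect to be the main obstacle, is the sharp estimate $\max_{\pi\in\Pi}\operatorname{rank}(p_\pi^T-p_{\pi_0}^T)^{0}_s\le\lvert S\rvert-\mathds{1}_S(s)$: the line indexed by $s$ is itself variable precisely when $s\in S$, so zeroing it removes exactly one of the $\lvert S\rvert$ variable lines in that case and none otherwise. This is what cancels the $+\mathds{1}_S(s)$ coming from the prefactor and pins the total degree at $\lvert S\rvert$ rather than $\lvert S\rvert+1$, recovering Theorem~\ref{theo:DegPOMDPs}. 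Establishing this exact rank drop, as opposed to the naive bound that would leave an extra $+1$, is the delicate point of the proof.
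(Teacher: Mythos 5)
Your proposal is correct and follows essentially the same route as the paper's proof: Cramer's rule applied to $(I-\gamma p_\pi^T)\rho=(1-\gamma)\mu$ for \plaineqref{app:eq:cramer}, then Proposition~\ref{prop:degreemultdetpol} applied to the decomposition $I-\gamma p_\pi^T=(I-\gamma p_{\pi_0}^T)-\gamma(p_\pi^T-p_{\pi_0}^T)$, with the rank bounds obtained by counting the lines of $p_\pi^T-p_{\pi_0}^T$ that can be nonzero, namely those corresponding to states in $S$. The step you single out as delicate---that zeroing the $s$-line in $(p_\pi^T-p_{\pi_0}^T)^0_s$ removes one of the at most $\lvert S\rvert$ variable lines exactly when $s\in S$, so that the rank bound drops to $\lvert S\rvert-\mathds{1}_S(s)$ and cancels the $+\mathds{1}_S(s)$ from the prefactor---is precisely the case distinction ($s\in S$ versus $s\notin S$) made in the paper's proof.
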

\begin{proof}
Recall that we have 
    \[ \eta_\gamma^{\pi, \mu}(s, a) =  (\pi\circ\beta)(a|s)\rho^{\pi, \mu}_\gamma(s). \]
Further, by Proposition \ref{prop:bellmanflow}, the state distribution is given by $\rho^{\pi, \mu}_\gamma=(1-\gamma)(1-\gamma p_\pi^T)^{-1}\mu$. Applying Cramer's rule yields
    \[ \rho^{\pi, \mu}_\gamma(s) = \frac{\det(I-\gamma p_\pi^T)^\mu_s}{\det(I-\gamma p_\pi^T)}, \]
where $(I-\gamma p_\pi^T)^\mu_s$ denotes the matrix obtained by replacing the $s$-th row of $I-\gamma p_\pi^T$ with $\mu$, which shows \plaineqref{app:eq:cramer}. For the estimates on the degree, we note that 
    \[ \deg(q_{s, a}) \le \deg(I-\gamma p_\pi^T)_s^\mu + \mathds{1}_S(s). \]
Further, we can use Proposition \ref{prop:degreemultdetpol} to estimate the degree over a subset $\Pi\subseteq\Delta_\mathcal A^\mathcal S$ of policies which agree with a fixed policy \(\pi_0\) on all states outside of \(O\subseteq\mathcal O\). We obtain
    \[ \deg(I-\gamma p_\pi^T)_s^\mu \le \max_{\pi\in\Pi}\operatorname{rank}((I-\gamma p_{\pi_0}^T)_s^\mu - (I-\gamma p_{\pi}^T)_s^\mu) = \max_{\pi\in\Pi}\operatorname{rank} \gamma(p_\pi^T-p_{\pi_0}^T)_s^0, \]
which shows the first estimate in \plaineqref{eq:degofqsa}. To see the second inequality, we first assume that $s\in S$. Then $(p_\pi^T-p_{\pi_0}^T)_s^0$ has at most $\lvert S\rvert-1$ non zero columns and hence rank at most $\lvert S\rvert-1$. If $s\notin S$, then with the same argument, the rank of $(p_\pi^T-p_{\pi_0}^T)_s^0$ = $p_\pi^T-p_{\pi_0}^T$ is at most $\lvert S \rvert$ and the second estimate in \plaineqref{eq:degofqsa} holds in both cases. The estimates in \plaineqref{eq:degofq} follow with completely analoguous arguments.
\end{proof}

\begin{remark}
The polynomial \(q\) is independent of the initial distribution \(\mu\), whereas the polynomials \(q_{s, a}\) and therefore also their degrees depend on \(\mu\). Further, Proposition \ref{prop:degreeSimDetPolTwo} contains an exact expressions for the degree of the polynomials \(q_{s, a}\) and \(q\) depending on the eigenvalues of certain matrices.
\end{remark}

\begin{corollary}[Degree of the reward and value function] \label{app:cor:DegRew}
Theorem~\ref{app:prop:DegDisStaDis} also yields the rational degree of the reward and value function. Indeed, it holds that\footnote{Here, $u\otimes v\coloneqq uv^T$ denotes the Kronecker product.} 
    \[ R^\mu_\gamma(\pi) = \frac{\sum_{s, a} r(s, a)q_{s, a}(\pi)}{q(\pi)} = (1-\gamma)\cdot \frac{\det(I-\gamma p_\pi + r_\pi\otimes \mu)}{\det(I-\gamma p_\pi)} - 1 + \gamma, \]
where we used the matrix determinant lemma \citep{vrabel2016note}, 
and 
    \[ V^\pi_\gamma(s) = \frac{\det(I-\gamma p_\pi + r_\pi\otimes\delta_s)}{\det(I-\gamma p_\pi)} - 1 + \gamma. \]
The degree of their denominator is bounded by  \plaineqref{eq:degofq}. 
The degree of the numerator of $V^\pi_\gamma(s)$ is bounded by \plaineqref{eq:degofqsa}. 
Finally, the degree of the numerator of the reward $R_\gamma^\mu$ is bounded by the maximum degree of the numerators of $V_\gamma^\pi(s)$ over the support of $\mu$. An explicit formula for the rational degrees can be deduced from Remark~\ref{app:rem:exactdegree}.
\end{corollary}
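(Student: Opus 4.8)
The plan is to obtain both displayed closed forms from a single application of the matrix determinant lemma, and then to read off every degree bound from the Cramer representation of Theorem~\ref{app:prop:DegDisStaDis}, so that no fresh determinantal estimate is required.

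First I would introduce the expected one-step reward vector $r_\pi\in\mathbb R^{\mathcal S}$ with entries $r_\pi(s)\coloneqq\sum_a r(s,a)(\pi\circ\beta)(a|s)$, which is linear in $\pi$, and contract the reward against the state frequency,
\[ R^\mu_\gamma(\pi)=\sum_{s,a}r(s,a)\,\eta^{\pi,\mu}_\gamma(s,a)=\sum_s r_\pi(s)\,\rho^{\pi,\mu}_\gamma(s)=r_\pi^{T}\rho^{\pi,\mu}_\gamma. \]
By Proposition~\ref{prop:bellmanflow}, $\rho^{\pi,\mu}_\gamma=(1-\gamma)(I-\gamma p_\pi^{T})^{-1}\mu$ wherever $I-\gamma p_\pi$ is invertible (a dense set for $\gamma\in(0,1)$), so $R^\mu_\gamma(\pi)=(1-\gamma)\,r_\pi^{T}(I-\gamma p_\pi^{T})^{-1}\mu$. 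As this is a scalar it equals its transpose, $r_\pi^{T}(I-\gamma p_\pi^{T})^{-1}\mu=\mu^{T}(I-\gamma p_\pi)^{-1}r_\pi$, and the matrix determinant lemma applied to $A+uv^{T}$ with $A=I-\gamma p_\pi$, $u=r_\pi$, $v=\mu$ yields
\[ \mu^{T}(I-\gamma p_\pi)^{-1}r_\pi=\frac{\det(I-\gamma p_\pi+r_\pi\otimes\mu)}{\det(I-\gamma p_\pi)}-1. \]
Multiplying by $(1-\gamma)$ and rearranging gives the reward formula; the value-function formula follows by specializing $\mu=\delta_s$, using $V^\pi_\gamma(s)=R^{\delta_s}_\gamma(\pi)$.

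For the degrees I would avoid analyzing the determinantal numerators directly and instead transport the bounds of Theorem~\ref{app:prop:DegDisStaDis}. The denominator $\det(I-\gamma p_\pi)=\det(I-\gamma p_\pi^{T})=q(\pi)$ is exactly the polynomial $q$ of that theorem, so \plaineqref{eq:degofq} bounds its degree. Specializing the Cramer representation to $\mu=\delta_s$ writes $V^\pi_\gamma(s)=\sum_{s',a}r(s',a)\,q_{s',a}(\pi)/q(\pi)$ over the same denominator $q$, whence its numerator is a linear combination of the $q_{s',a}$ and inherits the degree bound \plaineqref{eq:degofqsa}; since the determinantal fraction and this fraction share the denominator $q$, their numerators coincide as polynomials and the bound applies to either form. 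The reward bound then follows from linearity in the initial distribution, $R^\mu_\gamma=\sum_s\mu_s V^\pi_\gamma(s)$: over the common denominator $q$ the numerator of $R^\mu_\gamma$ is $\sum_{s\in\operatorname{supp}(\mu)}\mu_s$ times the numerators of the $V^\pi_\gamma(s)$, so its degree is at most the maximum of their degrees, as asserted. The explicit degrees are obtained by applying Remark~\ref{app:rem:exactdegree} to the affine-in-$\pi$ determinantal polynomials in the numerator and denominator and counting nonzero eigenvalues of the associated matrix pencils. The only delicate point—hence the main, mild obstacle—is the bookkeeping of transposes and of the $(1-\gamma)$ normalization when moving between representations, together with justifying that the matrix determinant lemma (valid on the invertible locus) extends to the polynomial identities used for counting; this is immediate since both sides are polynomials agreeing on a dense set.
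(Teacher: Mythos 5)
Your proposal is correct and follows essentially the route the paper intends for this corollary: contract the Cramer representation of Theorem~\ref{app:prop:DegDisStaDis} against the reward to get $R^\mu_\gamma(\pi)=(1-\gamma)\,\mu^T(I-\gamma p_\pi)^{-1}r_\pi$, apply the matrix determinant lemma, and transport the degree bounds \plaineqref{eq:degofq} and \plaineqref{eq:degofqsa} through the common denominator $q$ together with the linearity $R^\mu_\gamma=\sum_s\mu_s V^\pi_\gamma(s)$ (your density argument is not even needed, since $I-\gamma p_\pi$ is invertible for \emph{every} $\pi$ when $\gamma\in(0,1)$, the spectral radius of $p_\pi$ being $1$). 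One point to flag: specializing $\mu=\delta_s$ as you do yields $V^\pi_\gamma(s)=(1-\gamma)\cdot\det(I-\gamma p_\pi+r_\pi\otimes\delta_s)/\det(I-\gamma p_\pi)-1+\gamma$, with a $(1-\gamma)$ prefactor that the corollary's displayed formula for $V^\pi_\gamma(s)$ omits; since the paper defines $V^\pi_\gamma(s)\coloneqq R^{\delta_s}_\gamma(\pi)$, your version is the one consistent with the reward formula, so this discrepancy is a typo in the statement rather than a gap in your argument.
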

\begin{corollary}[Degree of curves]\label{app:cor:degcurves}
        Let \((\mathcal S, \mathcal A, \alpha, r)\) be an MDP, \(\mu\in\Delta_\mathcal S\) be an initial distribution, \(\gamma\in(0, 1)\) a discount factor and \(r\in\mathbb R^{\mathcal S\times\mathcal A}\). Further, let \(\pi_0, \pi_1\in\Delta_\mathcal A^\mathcal S\) be two policies that disagree on \(k\) states. 
Let $\eta_\lambda$ and \(V_\lambda\) denote the discounted state-action frequencies and the value function belonging to the policy \(\pi_\lambda\coloneqq\pi_0+\lambda(\pi_1-\pi_0)\). Then both $\lambda\mapsto\eta_\lambda$ and $\lambda\mapsto V^\lambda$ are rational functions of degree at most $k$.
\end{corollary}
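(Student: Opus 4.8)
The plan is to reduce the claim to the degree bounds already obtained for the parametrization in the entries of the policy, namely Theorem~\ref{app:prop:DegDisStaDis} and Corollary~\ref{app:cor:DegRew}, and then to transport those bounds along the affine parametrization $\lambda\mapsto\pi_\lambda$.

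First I would identify the family of policies in which the interpolation curve lives. Let $O\coloneqq\{s\in\mathcal S\mid \pi_0(\cdot|s)\neq\pi_1(\cdot|s)\}$ be the set of states on which $\pi_0$ and $\pi_1$ disagree, so that $\lvert O\rvert=k$. For $s\notin O$ we have $\pi_0(\cdot|s)=\pi_1(\cdot|s)$ and hence $\pi_\lambda(\cdot|s)=\pi_0(\cdot|s)$ for every $\lambda$; thus the whole segment $\{\pi_\lambda\}_{\lambda\in[0,1]}$ lies inside the family $\Pi$ of policies that agree with $\pi_0$ on all states outside $O$. Because we are in the fully observable case, $\beta=\operatorname{id}$, so the set $S=\{s\in\mathcal S\mid \beta(o|s)>0\text{ for some }o\in O\}$ appearing in Theorem~\ref{app:prop:DegDisStaDis} equals $O$ and satisfies $\lvert S\rvert=k$.

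Next I would invoke the degree bounds on $\Pi$. By Theorem~\ref{app:prop:DegDisStaDis}, the restriction of $\eta^{\pi,\mu}_\gamma(s,a)$ to $\Pi$ is the ratio $q_{s,a}(\pi)/q(\pi)$ of two polynomials in the entries of $\pi$ of degree at most $\lvert S\rvert=k$; by Corollary~\ref{app:cor:DegRew}, the value function $V^\pi_\gamma(s)$ is likewise a ratio of polynomials of degree at most $k$ sharing the common denominator $q(\pi)=\det(I-\gamma p_\pi)$. It then remains to pass from the variable $\pi$ to the scalar variable $\lambda$. Since $\lambda\mapsto\pi_\lambda=\pi_0+\lambda(\pi_1-\pi_0)$ is affine in $\lambda$, the restriction of any polynomial of degree at most $d$ in the entries of $\pi$ to this line is a univariate polynomial in $\lambda$ of degree at most $d$ (the affine analogue of the elementary fact recorded in the proposition on the degree of polynomials, using that a shift of the argument does not change the total degree). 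Applying this to $q_{s,a}$, to $q$, and to the numerator and denominator of $V^\pi_\gamma(s)$ shows that $\lambda\mapsto\eta_\lambda$ and $\lambda\mapsto V_\lambda$ are rational functions of $\lambda$ of degree at most $k$, as claimed.

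The argument is essentially a specialization of Theorem~\ref{app:prop:DegDisStaDis}, so I do not expect a deep obstacle; the only points requiring care are bookkeeping ones. I would make sure that in the fully observable case the set $S$ of states compatible with $O$ genuinely collapses to the disagreement set $O$ (so that the bound is $k$ and not something larger), and that restricting to the affine line $\{\pi_\lambda\}$ cannot raise the degree of the numerator or denominator. In particular, since $\eta_\lambda$ and $V_\lambda$ share the single denominator $q(\pi_\lambda)$, the rational degree in $\lambda$ is controlled by the larger of the numerator and denominator degrees, both of which are at most $k$.
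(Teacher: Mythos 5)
Your proof is correct and takes essentially the same route as the paper, which states this corollary without a separate proof precisely because it is the specialization of Theorem~\ref{app:prop:DegDisStaDis} and Corollary~\ref{app:cor:DegRew} (with $\beta=\operatorname{id}$, so $S$ equals the disagreement set of size $k$) to the affine line $\lambda\mapsto\pi_\lambda$, which lies in the family $\Pi$ of policies agreeing with $\pi_0$ outside the disagreement states. The remark following the corollary in the paper confirms this reading: it writes $\eta_\lambda(s,a)=q_{sa}(\lambda)/q(\lambda)$ exactly as you do and even refines your bound $k$ to $\operatorname{rank}(p_1-p_0)$.
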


\subsection{Properties of degree-one rational functions}\label{app:sec:2.3}

\subsubsection{A line theorem for degree-one rational functions}

First, we notice that certain degree-one rational functions map lines to lines which implies that they map polytopes to polytopes. Further, the extreme points of the range lie in the image of the extreme points which implies that degree-one rational functions are maximized in extreme points – just like linear functions. 

\begin{definition}
We say that a function \(f\colon\Omega\to\mathbb R^m\) is a \emph{rational function of degree at most \(k\) with common denominator} if it admits a representation of the form \(f_i = p_i/q\) for polynomials \(p_i\) and \(q\) of degree at most \(k\).
\end{definition}

\begin{remark}
We have seen that the state-action frequencies, the reward function and the value function of POMDPs are rational functions of degree at most \(\lvert \mathcal S\rvert\) with common denominator. In the case of MDPs and if a policy is fixed on all but \(k\) states, it is a rational function with common denominator of degree at most \(k\).
\end{remark}

\begin{proposition}\label{prop:genlinthm}
Let \(\Omega\subseteq\mathbb R^d\) be convex and \(f\colon\Omega\to\mathbb R^m\) be a rational function of degree at most one with common denominator and the representation \(f_i(x) = p_i(x)/q(x)\) for affine linear functions \(p_i, q\). Then, \(f\) maps lines to lines. More precisely, if \(x_0,x_1\in\Omega\), then 
    \[ c\colon[0, 1]\to[0, 1], \quad \lambda \mapsto  \frac{q(x_1)\lambda}{q(x_\lambda)} = \frac{q(x_1)\lambda}{(q(x_1) - q(x_0))\lambda + q(x_0)} \] 
is strictly increasing and satisfies
    \begin{equation}\label{eq:lineThm}
        f((1-\lambda) x_0 + \lambda x_1) = (1-c(\lambda)) f(x_0) + c(\lambda) f(x_1) = f(x_0) + c(\lambda) (f(x_1) - f(x_0)).
    \end{equation}
Further, \(c\) is strictly convex if \(\lvert q(x_1)\rvert<\lvert q(x_0)\rvert\), strictly concave if \(\lvert q(x_1)\rvert>\lvert q(x_0)\rvert\) and linear if \(\lvert q(x_0)\rvert = \lvert q(x_1)\rvert\). 
\end{proposition}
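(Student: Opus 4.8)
The plan is to reduce everything to the affine linearity of $p_i$ and $q$ and then read off $c$ by one direct computation. Write $x_\lambda \coloneqq (1-\lambda)x_0 + \lambda x_1$, which lies in $\Omega$ for $\lambda\in[0,1]$ by convexity. Since $p_i$ and $q$ are affine linear they preserve affine combinations, so $p_i(x_\lambda) = (1-\lambda)p_i(x_0) + \lambda p_i(x_1)$ and $q(x_\lambda) = (1-\lambda)q(x_0) + \lambda q(x_1)$; the latter equals $(q(x_1)-q(x_0))\lambda + q(x_0)$, which is exactly the denominator in the definition of $c$. Because $f$ is a well-defined map on $\Omega$, the denominator $q$ does not vanish on $\Omega$, and in particular $q(x_\lambda)\neq 0$ for all $\lambda\in[0,1]$.

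First I would establish the line identity \plaineqref{eq:lineThm}. Putting $f_i(x_\lambda) - f_i(x_0)$ over the common denominator $q(x_\lambda)q(x_0)$ and inserting the affine expressions above, the numerator telescopes: the two copies of $(1-\lambda)p_i(x_0)q(x_0)$ cancel, leaving $\lambda\,[p_i(x_1)q(x_0) - p_i(x_0)q(x_1)]$. The same cross term appears in $f_i(x_1) - f_i(x_0) = [p_i(x_1)q(x_0) - p_i(x_0)q(x_1)]/(q(x_1)q(x_0))$. Substituting $c(\lambda) = \lambda q(x_1)/q(x_\lambda)$ into $c(\lambda)\,(f_i(x_1)-f_i(x_0))$ therefore reproduces $\lambda\,[p_i(x_1)q(x_0) - p_i(x_0)q(x_1)]/(q(x_\lambda)q(x_0))$, i.e.\ precisely $f_i(x_\lambda) - f_i(x_0)$ (this comparison is valid even when the cross term vanishes, in which case both sides are $0$). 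As this holds in every component, $f$ maps the segment $[x_0,x_1]$ onto the segment between $f(x_0)$ and $f(x_1)$, reparametrized by $c$.

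Next I would analyze $c$ through its derivatives. Abbreviating $a\coloneqq q(x_0)$, $b\coloneqq q(x_1)$, so $q(x_\lambda) = a + (b-a)\lambda$, the quotient rule gives $c'(\lambda) = ab/q(x_\lambda)^2$ and $c''(\lambda) = -2ab(b-a)/q(x_\lambda)^3$. The key observation is that $q(x_\lambda)$ is a continuous, nonvanishing affine function on $[0,1]$, so by the intermediate value theorem it has a constant sign; hence $a$ and $b$ share that sign and $ab>0$. This gives $c'(\lambda)>0$, so $c$ is strictly increasing, and together with $c(0)=0$ and $c(1)=1$ it follows that $c$ is a bijection of $[0,1]$.

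The only place that needs care, and hence the main obstacle, is the sign bookkeeping for $c''$. Since $ab>0$, the sign of $c''(\lambda)$ is that of $-(b-a)/q(x_\lambda)^3$, and I would split into the two sign cases. If $a,b>0$ then $q(x_\lambda)^3>0$ and $\operatorname{sign} c'' = \operatorname{sign}(a-b)$; if $a,b<0$ then $q(x_\lambda)^3<0$ and $\operatorname{sign} c'' = \operatorname{sign}(b-a)$. In each case one translates the inequality between $a$ and $b$ into the inequality between $|q(x_0)|$ and $|q(x_1)|$, and both cases yield the same conclusion: $|q(x_1)|<|q(x_0)|$ forces $c''>0$ (strictly convex), $|q(x_1)|>|q(x_0)|$ forces $c''<0$ (strictly concave), and $|q(x_1)|=|q(x_0)|$ forces $b=a$, whence $c''\equiv 0$ and $c$ is linear. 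This exhausts all claims of the proposition.
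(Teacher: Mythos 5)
Your proof is correct and follows essentially the same route as the paper's: expand $p_i$ and $q$ affinely along the segment to identify $c(\lambda)$, then differentiate $c$ twice and use that $q$ is nonvanishing (hence of constant sign) on the segment to settle monotonicity and convexity. The only differences are presentational — you verify the identity in the difference form $f(x_\lambda)-f(x_0)=c(\lambda)(f(x_1)-f(x_0))$ rather than as a convex combination with coefficients summing to one, and you spell out the sign case analysis for $c''$ that the paper leaves implicit.
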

\begin{proof}
We set \(x_\lambda\coloneqq (1-\lambda)x_0 + \lambda x_1\) and by explicit computation we obtain
\begin{align*}
    f(x_\lambda) & = \frac{p(x_\lambda)}{q(x_\lambda)} = \frac{(1-\lambda)p(x_0) + \lambda p(x_1)}{q(x_\lambda)} = \frac{(1-\lambda) q(x_0)}{q(x_\lambda)} \cdot f(x_0) + \frac{\lambda q(x_1)}{q(x_\lambda)} \cdot f(x_1).
\end{align*}
Noting that 
    \[ \frac{\lambda q(x_1)}{q(x_\lambda)} = \frac{\lambda q(x_1)}{(1-\lambda)q(x_0) + \lambda q(x_1)} = c(\lambda) \]
and 
    \[ \frac{(1-\lambda) q(x_0)}{q(x_\lambda)} + \frac{\lambda q(x_1)}{q(x_\lambda)} = \frac{(1-\lambda) q(x_0) + \lambda q(x_1)}{q(x_\lambda)} = 1 \]
yields \plaineqref{eq:lineThm}. Finally, we differentiate and obtain 
\begin{equation}\label{eq:derintpol}
    c^\prime(\lambda) = \frac{q(x_0) q(x_1)}{q(x_\lambda)^2}.
\end{equation}
Since \(q\) has no root in \(\Omega\) it follows that \(q(x_0)\) and \(q(x_1)\) have the same sign and hence \(c^\prime(\lambda) > 0\). Differentiating a second time yields
    \[ c^{\prime\prime}(\lambda) = -2q(x_0)q(x_1)(q(x_1)-q(x_0)) \cdot q(x_\lambda)^{-3}.  \]
Using that \(\operatorname{sgn}(q(x_\lambda)) = \operatorname{sgn}(q(x_0)) = \operatorname{sgn}(q(x_1))\) yields the assertion.
\end{proof}
\begin{remark}
The formula \plaineqref{eq:lineThm} holds for all \(\lambda\in\mathbb R\) for which \(x_\lambda = \lambda x_0 + (1-\lambda)x_1\in\Omega\).
\end{remark}

\begin{proposition}[Level sets of degree one rational functions]
Let \(\Omega\subseteq\mathbb R^d\) be convex and \(f\colon\Omega\to\mathbb R\) be a rational function of degree at most one. Then, \(L_\alpha\coloneqq \{x\in\Omega\mid f(x) = \alpha \}\) is the intersection of an affine space with \(\Omega\).
\end{proposition}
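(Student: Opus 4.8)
The plan is to reduce the level-set condition to a single affine equation. Write $f = p/q$ with $p, q$ affine linear functions on $\mathbb{R}^d$; this is exactly what "rational function of degree at most one" provides (each is a polynomial of degree at most one). Since $f$ is a genuine real-valued function defined on all of $\Omega$, the denominator $q$ must be nonvanishing on $\Omega$, just as in the hypothesis of Proposition~\ref{prop:genlinthm}; otherwise the representation $p/q$ would not define a value at a zero of $q$.

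First I would rewrite the defining condition using that $q(x)\ne 0$ for $x\in\Omega$, which makes clearing the denominator an equivalence rather than merely an implication:
    \[ f(x) = \alpha \iff \frac{p(x)}{q(x)} = \alpha \iff p(x) - \alpha\, q(x) = 0. \]
Setting $\ell_\alpha \coloneqq p - \alpha q$, this identifies the level set as $L_\alpha = \{x\in\Omega \mid \ell_\alpha(x) = 0\}$.

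Next I would observe that $\ell_\alpha$ is itself affine linear, being a linear combination of the affine functions $p$ and $q$. The zero set of an affine function on $\mathbb{R}^d$ is always an affine subspace: writing $\ell_\alpha(x) = \langle a, x\rangle + b$, it is a hyperplane when $a\ne 0$, the empty set when $a = 0$ and $b\ne 0$, and all of $\mathbb{R}^d$ when $a = b = 0$ — all of which are (possibly degenerate) affine spaces. Hence $L_\alpha = \{x\in\mathbb{R}^d \mid \ell_\alpha(x) = 0\}\cap\Omega$ is the intersection of an affine space with $\Omega$, as claimed.

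There is essentially no hard step here; the only point requiring care is the nonvanishing of $q$ on $\Omega$, which both justifies the equivalence above and ensures the degenerate cases of $\ell_\alpha$ are correctly accounted for as affine spaces. If one wished to allow representations that are not in lowest terms, one would simply note that at every point of $\Omega$ the value $f(x)$ is well defined and equals $\alpha$ precisely when $p(x) = \alpha q(x)$, so the argument is unaffected.
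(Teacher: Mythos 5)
Your proof is correct, but it takes a genuinely different route from the paper's. The paper derives the statement as a one-line consequence of the line theorem (Proposition~\ref{prop:genlinthm}): for $x, y \in L_\alpha$, the interpolation formula $f(x_\lambda) = f(x_0) + c(\lambda)(f(x_1)-f(x_0))$ shows that the whole line through $x$ and $y$, intersected with $\Omega$, stays inside $L_\alpha$; the conclusion that a subset of a convex set which is closed under lines through pairs of its points must be the trace of an affine space is left implicit (it is true, but requires a short additional argument, e.g.\ noting that line-closedness implies convexity and then passing a line through the target point and a relative-interior point of a simplex spanned by points of $L_\alpha$). You instead clear the denominator: with $q$ nonvanishing on $\Omega$, the condition $f(x)=\alpha$ is \emph{equivalent} to the affine equation $p(x)-\alpha q(x)=0$, which exhibits the affine space explicitly as $\{\ell_\alpha = 0\}$ and handles the degenerate cases ($\ell_\alpha$ constant) transparently. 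Your approach is more elementary and self-contained, and it produces the affine space in closed form; the paper's approach buys reusability of machinery it has already built (and applies verbatim to vector-valued $f$ or any map known only to send lines to lines), at the cost of the omitted concluding step. One small caveat: your closing remark about representations not in lowest terms is not quite right --- if $p$ and $q$ share a zero on $\Omega$ (say $p = 2q$ with $q$ vanishing somewhere), then $p(x) = \alpha q(x)$ can hold at a point where the extended value of $f$ is not $\alpha$, so the equivalence genuinely requires $q \neq 0$ on $\Omega$; since that assumption is exactly the one the paper itself makes in Proposition~\ref{prop:genlinthm}, this does not affect your proof.
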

\begin{proof}
For \(x, y\in L_\alpha\) the ray \(\{ x + t(y-x) \mid t\in\mathbb R \}\cap \Omega \) is contained in \(L_\alpha\) by the line theorem.
\end{proof}

\subsubsection{Extreme points of degree-one rational functions}\label{app:subsubsec:extremepoints}
It is well known that linear functions obtain their maxima on extreme points. We show that this is also the case for rational functions of degree at most one.
\begin{definition}
Let \(\Omega\subseteq\mathbb R^d\). Then we call \(x\in\Omega\) an \emph{extreme point} of \(\Omega\) if \(x\) is not the strict convex combination of two other points in \(\Omega\), i.e.,  if \(x = (1-\lambda) x_0 + \lambda x_1\) for \(x_0, x_1\in\Omega\) and \(\lambda\in(0, 1)\) implies \(x_0=x_1=x\). We denote the set of extreme points of \(\Omega\) by \(\operatorname{extr}(\Omega)\).
\end{definition}

\begin{proposition}\label{prop:extpoiratfun}
Let \(\Omega\subseteq\mathbb R^d\) be convex and \(f\colon\Omega\to\mathbb R^m\) be a rational function of degree at most one with common denominator. Then \(f(\Omega)\) is convex and we have \(\operatorname{extr}(f(\Omega)) \subseteq f(\operatorname{extr}(\Omega))\).
\end{proposition}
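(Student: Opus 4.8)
The plan is to treat the two assertions separately, using the line theorem (Proposition~\ref{prop:genlinthm}) as the main engine in both. For convexity of $f(\Omega)$ I would take two image points $y_0=f(x_0)$ and $y_1=f(x_1)$ and recall from Proposition~\ref{prop:genlinthm} that along the segment $x_\lambda=(1-\lambda)x_0+\lambda x_1\in\Omega$ one has $f(x_\lambda)=(1-c(\lambda))f(x_0)+c(\lambda)f(x_1)$, where $c\colon[0,1]\to[0,1]$ is continuous and strictly increasing with $c(0)=0$ and $c(1)=1$. By the intermediate value theorem $c$ is onto $[0,1]$, so every convex combination $(1-t)y_0+ty_1$ is realized as $f(x_\lambda)$ for the $\lambda$ with $c(\lambda)=t$; since $x_\lambda\in\Omega$, this shows $[y_0,y_1]\subseteq f(\Omega)$.

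For the inclusion of extreme points I would fix $y\in\operatorname{extr}(f(\Omega))$ and study its fiber $F\coloneqq\{x\in\Omega\mid f(x)=y\}$. The key structural claim is that $F$ is a nonempty face of $\Omega$: if $x\in F$ is written as a strict convex combination $x=(1-\lambda)x_0+\lambda x_1$ with $\lambda\in(0,1)$ and $x_0,x_1\in\Omega$, then the line theorem expresses $y=f(x)$ as the strict convex combination $(1-c(\lambda))f(x_0)+c(\lambda)f(x_1)$ with $c(\lambda)\in(0,1)$, and extremality of $y$ forces $f(x_0)=f(x_1)=y$, i.e. $x_0,x_1\in F$. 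Moreover $F$ is convex, being the intersection of $\Omega$ with the affine set cut out by the equations $p_i(x)=y_i\,q(x)$ for $i=1,\dots,m$.

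Once $F$ is identified as a nonempty convex face of $\Omega$, I would pick an extreme point $x^\ast$ of $F$ and invoke the standard transitivity of extremality across faces: any representation $x^\ast=(1-\lambda)x_0+\lambda x_1$ with $x_0,x_1\in\Omega$ and $\lambda\in(0,1)$ already forces $x_0,x_1\in F$ (the face property), whence $x_0=x_1=x^\ast$ because $x^\ast\in\operatorname{extr}(F)$. Thus $x^\ast\in\operatorname{extr}(\Omega)$, and since $x^\ast\in F$ we conclude $y=f(x^\ast)\in f(\operatorname{extr}(\Omega))$.

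The one genuine obstacle is guaranteeing that the fiber $F$ possesses an extreme point at all. This is automatic in the relevant case where $\Omega$ is a policy polytope, hence compact: then $F$ is a nonempty compact convex set and the Minkowski/Krein--Milman theorem applies. For a general unbounded convex $\Omega$ the inclusion can in fact fail (for instance projecting the strip $\mathbb{R}\times[0,1]$ onto its second coordinate produces extreme image points with no extreme preimage), so compactness of $\Omega$ is precisely the ingredient that makes the extreme-point step go through.
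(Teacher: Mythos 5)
Your proof is correct, and on the extreme-point inclusion it takes a genuinely different route from the paper's. (The convexity part is the same argument in both: the line theorem, with your IVT remark making explicit why the image of the segment is the whole segment.) For the inclusion, the paper argues top-down: given $y=f(x)\in\operatorname{extr}(f(\Omega))$ with $x\notin\operatorname{extr}(\Omega)$, it writes $x$ as a strict convex combination of extreme points of $\Omega$ (invoking Carath\'eodory --- really Minkowski's theorem that a compact convex subset of $\mathbb R^d$ is the convex hull of its extreme points), groups the terms as $x=(1-\lambda)x_0+\lambda x_1$ with $x_1\in\operatorname{extr}(\Omega)$, and applies Proposition~\ref{prop:genlinthm} once: extremality of $y$ forces $f(x_0)=f(x_1)=y$, hence $y=f(x_1)\in f(\operatorname{extr}(\Omega))$. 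You argue bottom-up: the line theorem shows the fiber $F=\{x\in\Omega\mid f(x)=y\}$ is a convex face of $\Omega$, and face-transitivity of extremality promotes any extreme point of $F$ to an extreme point of $\Omega$ mapping to $y$. Both proofs run on the same engine; the paper's is shorter, while yours isolates precisely which property of $\Omega$ is consumed (existence of an extreme point of a nonempty compact convex fiber) instead of the stronger global property that $\Omega$ is the convex hull of its extreme points.

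Your closing observation is more than a technicality: it identifies a hypothesis missing from the proposition as stated, and the paper's own proof silently uses it. For $\Omega=\mathbb R\times[0,1]$ and $f(x_1,x_2)=x_2$ (a degree-one rational function with denominator $1$), one has $\operatorname{extr}(f(\Omega))=\{0,1\}$ while $\operatorname{extr}(\Omega)=\emptyset$, so the asserted inclusion fails; correspondingly, the paper's appeal to Carath\'eodory/Minkowski breaks down on such an $\Omega$, exactly as your ``pick an extreme point of $F$'' step would. Compactness of $\Omega$ (or the weaker assumption that $\Omega$ is the convex hull of its extreme points) repairs both arguments. Since the proposition is only applied downstream to compact convex sets and polytopes (Corollary~\ref{cor:maxdegone}, Proposition~\ref{prop:extrpoints}), nothing else in the paper is affected, but the hypothesis should be added to the statement.
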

\begin{proof}
Let \(y_0 = f(x_0), y_1 = f(x_1)\in f(\Omega)\). Then by the line theorem, the line connecting \(y_0\) and \(y_1\) agrees with the image of the line connecting \(x_0\) and \(x_1\) under \(f\), in particular, it is contained in \(f(\Omega)\) which shows the convexity of \(f(\Omega)\).
Pick now an extreme point \(y = f(x)\in\operatorname{extr}(f(\Omega))\). If \(x\in\operatorname{extr}(\Omega)\), there is nothing to show, so let \(x\notin\operatorname{extr}(\Omega)\). Then by the Carathéodory theorem we can write \(x\) as a strict convex combination \(\sum_{i=1}^n \lambda_i x_i, \lambda_i>0, n\ge2\) for some extreme points \(x_i\in \operatorname{extr}(\Omega)\). In particular, it is possible to write \(x\) as the strict convex combination \(x = (1-\lambda) x_0 + \lambda x_1\) by setting \(x_0\coloneqq \sum_{i=2}^n \lambda_i x_i\). Now, by the line theorem we have
    \[ y = (1-c(\lambda)) f(x_0) + c(\lambda) f(x_1), \]
where \(c(\lambda) \in(0, 1)\). Since \(y\) is an extreme point, this implies \(f(x_0) = f(x_1) = y\). In particular, this shows that \(y = f(x_1)\in f(\operatorname{extr}(\Omega))\).
\end{proof}

\begin{corollary}[Maximizers of degree-one rational functions]\label{cor:maxdegone}
Let \(\Omega\subseteq\mathbb R^d\) be a convex and compact set and let \(f\colon \Omega\to\mathbb R\) be a rational function of degree at most one with common denominator. Then \(f\) is maximized in at least one extreme point of \(\Omega\). In particular, if \(\Omega\) is a polytope, \(f\) is  maximized in at least one vertex.
\end{corollary}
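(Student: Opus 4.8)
The plan is to combine Proposition~\ref{prop:extpoiratfun} with the elementary fact that the maximum of a real-valued function is always an extreme point of its image. First I would observe that since $f\colon\Omega\to\mathbb R$ is a degree-one rational function with common denominator that is defined on all of $\Omega$, its denominator $q$ cannot vanish on $\Omega$; hence $f$ is continuous there. As $\Omega$ is compact, $f(\Omega)$ is a compact subset of $\mathbb R$, and by Proposition~\ref{prop:extpoiratfun} it is also convex. A compact convex subset of $\mathbb R$ is a closed bounded interval $[m, M]$, so the maximum $M=\max_{\Omega} f$ is attained and equals the right endpoint of this interval.

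Next I would note that the right endpoint $M$ is an extreme point of $f(\Omega)=[m,M]$: it cannot be written as a strict convex combination of two points of $[m,M]$ unless both of them equal $M$. Thus $M\in\operatorname{extr}(f(\Omega))$. Invoking the inclusion $\operatorname{extr}(f(\Omega))\subseteq f(\operatorname{extr}(\Omega))$ from Proposition~\ref{prop:extpoiratfun}, there exists an extreme point $x^\ast\in\operatorname{extr}(\Omega)$ with $f(x^\ast)=M$, which is precisely the assertion that $f$ attains its maximum at an extreme point of $\Omega$. For the polytope case I would then use that the extreme points of a polytope coincide with its vertices, so that $x^\ast$ may be taken to be a vertex.

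The only point requiring care—rather than a genuine obstacle—is confirming that $f(\Omega)$ is a bona fide closed interval, so that $M$ is actually attained and is an extreme point of the image. This is guaranteed by the compactness of $\Omega$ together with the continuity of $f$, the latter resting on the non-vanishing of the common denominator $q$ on $\Omega$. Beyond this, the result is an immediate corollary of Proposition~\ref{prop:extpoiratfun}, so I expect no serious difficulty.
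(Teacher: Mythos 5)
Your proposal is correct and follows essentially the same route as the paper's own proof: both use compactness and continuity (via the non-vanishing denominator) to see that $f(\Omega)$ is a compact interval, identify its right endpoint as an extreme point of the image, and then invoke the inclusion $\operatorname{extr}(f(\Omega))\subseteq f(\operatorname{extr}(\Omega))$ from Proposition~\ref{prop:extpoiratfun} to pull the maximizer back to an extreme point of $\Omega$. The only cosmetic difference is that you justify the interval structure of the image via its convexity, while the paper gets it directly from continuity and compactness; this is immaterial.
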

\begin{proof}
Since \(\Omega\) is compact and \(f\) is continuous, \(f(\Omega)\) is a compact interval, lets say \(f(\Omega) = [\alpha, \beta]\). By the preceding proposition we have \(\{\alpha, \beta\} = \operatorname{extr}(f(\Omega)) \subseteq f(\operatorname{extr}(\Omega))\), which shows that \(f\) is maximized in at least one extreme point.
\end{proof}

\begin{corollary}
Let \(P\subseteq\mathbb R^d\) be a polytope and \(f\colon\Omega\to\mathbb R^m\) be a rational function of degree at most one with common denominator. Then \(f(P)\) is a polytope and we have \(\operatorname{vert}(f(P)) \subseteq f(\operatorname{vert}(P))\).
\end{corollary}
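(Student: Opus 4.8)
The plan is to package the two facts already established for degree-one rational functions, namely the line theorem (Proposition~\ref{prop:genlinthm}) and the extreme-point transfer (Proposition~\ref{prop:extpoiratfun}), together with the classical characterization of polytopes as compact convex sets having finitely many extreme points. First I would record the elementary structural facts about the domain: since $P$ is a polytope, its extreme points coincide with its vertices, so $\operatorname{extr}(P)=\operatorname{vert}(P)$ is a finite set, and $P$ is compact and convex. As in Proposition~\ref{prop:genlinthm}, the denominator $q$ does not vanish on $P$, so $f$ is continuous on $P$; hence $f(P)$ is compact as the continuous image of a compact set.

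Next I would apply Proposition~\ref{prop:extpoiratfun}, whose hypotheses hold because $P$ is convex and $f$ is a degree-one rational function with common denominator. This yields at once that $f(P)$ is convex and that $\operatorname{extr}(f(P))\subseteq f(\operatorname{extr}(P))=f(\operatorname{vert}(P))$. Since $\operatorname{vert}(P)$ is finite, its image $f(\operatorname{vert}(P))$ is finite, and therefore $f(P)$ is a compact convex set possessing only finitely many extreme points.

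Finally, I would invoke the Krein--Milman (Minkowski) theorem in finite dimensions: a compact convex set equals the convex hull of its extreme points, and when these are finite in number the convex hull is, by definition, a polytope. Thus $f(P)$ is a polytope whose vertices are exactly its extreme points, and the asserted inclusion $\operatorname{vert}(f(P))=\operatorname{extr}(f(P))\subseteq f(\operatorname{vert}(P))$ follows directly.

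The argument is essentially bookkeeping on top of results already proved, so I do not expect a serious obstacle. The one step requiring genuine justification is the implication \emph{finitely many extreme points} $\Rightarrow$ \emph{polytope}, which relies on Minkowski's theorem and hence on the compactness of $f(P)$; consequently the only hypothesis one must be careful to verify is that $f$ is well defined and continuous on all of $P$ (equivalently, that $q$ has no zero on $P$), so that $f(P)$ is genuinely compact rather than merely bounded.
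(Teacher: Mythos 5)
Your proof follows the paper's own argument essentially verbatim: apply Proposition~\ref{prop:extpoiratfun} to get convexity of \(f(P)\) together with \(\operatorname{extr}(f(P))\subseteq f(\operatorname{extr}(P))=f(\operatorname{vert}(P))\), and then conclude that a compact convex set with finitely many extreme points is a polytope. The only difference is that you make explicit the two steps the paper compresses into ``which implies the assertion'' --- namely the compactness of \(f(P)\) (via non-vanishing of the common denominator, hence continuity of \(f\) on \(P\)) and the invocation of Minkowski's theorem --- which is a correct and welcome filling-in of detail rather than a different route.
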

\begin{proof}
By the preceding proposition, \(f(P)\) is convex. Further, \(f(P)\) has finitely many extreme points since \(\operatorname{extr}(f(P)) \subseteq f(\operatorname{extr}(P)) = f(\operatorname{vert}(P))\), which implies the assertion.
\end{proof}
\begin{proposition}\label{prop:extrpoints}
Let \(f\colon P\to\mathbb R^m\) be defined on the Cartesian product \(P = P_1\times \dots \times P_k\) of polytopes, which is a degree-one rational function with common denominator whenever all but one components are fixed. Then \(f(P)\) has finitely many extreme points and it holds that
    \[ \operatorname{extr}(f(P)) \subseteq f(\operatorname{vert}(P)) =  f(\operatorname{vert}(P_1)\times\dots\times\operatorname{vert}(P_k)). \]
In particular, if \(m=1\) this shows that \(f\) is maximized in at least one vertex of \(P\).
\end{proposition}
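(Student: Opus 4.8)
The plan is to establish the inclusion $\operatorname{extr}(f(P)) \subseteq f(\operatorname{vert}(P))$ by a coordinatewise reduction argument, peeling off one factor at a time and invoking Proposition~\ref{prop:extpoiratfun} on each slice. The starting observation is elementary but crucial: if $y$ is an extreme point of a set $S$ and $T \subseteq S$ is any subset with $y \in T$, then $y$ is also an extreme point of $T$, since a nontrivial convex decomposition of $y$ inside $T$ would also be one inside $S$. I would use this repeatedly to transfer the extremality of $y$ in the full image $f(P)$ down to each one-parameter slice.

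Concretely, fix $y \in \operatorname{extr}(f(P))$ and choose a preimage $x = (x_1, \dots, x_k) \in P$ with $f(x) = y$. Consider the slice $g_1 \colon P_1 \to \mathbb{R}^m$, $g_1(\xi) \coloneqq f(\xi, x_2, \dots, x_k)$. By hypothesis $g_1$ is a degree-one rational function with common denominator, so Proposition~\ref{prop:extpoiratfun} gives that $g_1(P_1)$ is convex with $\operatorname{extr}(g_1(P_1)) \subseteq g_1(\operatorname{vert}(P_1))$. Since $g_1(P_1) \subseteq f(P)$ and $y = g_1(x_1) \in g_1(P_1)$, the observation above shows $y \in \operatorname{extr}(g_1(P_1))$, and hence $y = g_1(v_1) = f(v_1, x_2, \dots, x_k)$ for some vertex $v_1 \in \operatorname{vert}(P_1)$. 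Repeating this with the second coordinate (now with $x_1$ replaced by $v_1$), then the third, and so on through all $k$ factors, I replace each $x_i$ by a vertex $v_i \in \operatorname{vert}(P_i)$ without changing the value $y$. After $k$ steps this yields $y = f(v_1, \dots, v_k) \in f(\operatorname{vert}(P_1) \times \dots \times \operatorname{vert}(P_k)) = f(\operatorname{vert}(P))$, which proves the inclusion. Finiteness of $\operatorname{extr}(f(P))$ then follows at once, since $\operatorname{vert}(P)$ is a finite set and so is its image.

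For the final assertion with $m = 1$, I would argue that the maximum of $f$ over the compact set $P$ is attained (here one uses that the common denominators do not vanish, so that $f$ is continuous), say $M = \max f(P)$. As the largest element of the subset $f(P) \subseteq \mathbb{R}$, the value $M$ cannot be written as a strict convex combination of two elements of $f(P)$, so $M \in \operatorname{extr}(f(P)) \subseteq f(\operatorname{vert}(P))$; any vertex mapping to $M$ is then a maximizer of $f$ on $P$.

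The step I expect to require the most care is conceptual rather than computational: one cannot simply invoke convexity of the whole image $f(P)$, because $f$ is only a degree-one rational function when all but one coordinate is frozen, and the line theorem (Proposition~\ref{prop:genlinthm}) need not apply to segments that move several coordinates simultaneously, so $f(P)$ may fail to be convex. The coordinate-descent is precisely what sidesteps this obstacle: every reduction stays within a single slice whose image is convex and is contained in $f(P)$, which lets the extremality of $y$ propagate cleanly from the global image down to each individual factor.
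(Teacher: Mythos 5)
Your proof is correct and follows essentially the same route as the paper's: a coordinate-by-coordinate replacement of each $x_i$ by a vertex $v_i\in\operatorname{vert}(P_i)$, applying Proposition~\ref{prop:extpoiratfun} to each one-factor slice and transferring extremality from $f(P)$ down to the slice image. You are merely more explicit than the paper about the subset-extremality transfer, the finiteness conclusion, and the $m=1$ maximization argument, which the paper leaves implicit.
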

\begin{proof}
Let now \(x = (x^{(1)}, \dots, x^{(k)})\in P_1\times\dots\times P_k\) be such that \(f(x)\in\operatorname{extr}(f(P))\). If \(x^{(i)}\in\operatorname{vert}(P_i)\), there is nothing to show. Hence, we assume that \(x^{(i)}\notin \operatorname{vert}(P_i)\). Let us denote the restriction of \(f\) onto \(P_i\) by \(g\), where we keep the other components fixed to be \(x^{(j)}\). Then we have \(g(x^{(i)}) \in\operatorname{extr}(g(P_i))\) and hence by Proposition \ref{prop:extpoiratfun} there is \(\tilde x^{(i)}\in\operatorname{vert}(P_i)\) such that \(g(\tilde x^{(i)}) = g(x^{(i)}) = f(x)\). Replacing \(x^{(i)}\) by \(\tilde x^{(i)}\) and iterating over \(i\) yields the claim.
\end{proof}
\begin{remark}
We have seen that both the value function as well as the discounted state-action frequencies are degree-one rational functions in the rows of the policy in the case of full observability. 
Hence, the extreme points of the set of all value functions and of the set of discounted state-action frequencies are described by the proposition above. In fact we will see later that the discounted state-action frequencies form a polytope; further, one can show that the set of value functions is a finite union of polytopes \citep[see][]{dadashi2019value}. 
\end{remark}

\subsubsection{Implications for POMDPs}

\corexlinthm*
\begin{proof}
This is a direct consequence of Proposition \ref{prop:genlinthm} and Theorem \ref{theo:DegPOMDPs}.
\end{proof}

\begin{remark}\label{rem:meanreward}
The proposition above describes the \emph{interpolation speed} \(\lambda \cdot\rho_\lambda^\mu(\tilde s)/\rho_1^\mu(\tilde s)\)
in terms of the discounted state distribution in \(\tilde s\). This expressions extends to the case of mean rewards -- note that the determinants vanish -- and the theorem can be shown to hold in this case as well, if we set \(0/0\coloneqq0\). Note that the interpolation speed does not depend on the initial condition \(\mu\).  
\end{remark}
\begin{remark}
Refinements on the upper bound of the rational degree of $\lambda\mapsto V_\lambda$ and $\lambda\mapsto\eta_\lambda$ can be obtained using Proposition~\ref{prop:degreeSimDetPolTwo}. Indeed, if we write $\eta_\lambda(s, a) = q_{sa}(\lambda)/q(\lambda)$ like in Theorem~\ref{app:prop:DegDisStaDis} those degrees can be upper bounded by
    \[ \deg(q_{sa}) \le \operatorname{rank}(p_1-p_0)^0_s + \mathds{1}_S(s) \le \operatorname{rank}(p_1-p_0) \quad \text{and} \quad \deg(q) \le \operatorname{rank}(p_1-p_0), \]
where $S\subseteq\mathcal S$ is the set of states on which the two policies differ; see also the proof of Theorem~\ref{app:prop:DegDisStaDis} for more details on an analogue argument. Hence, the degree of the two curves $\lambda\mapsto V_\lambda$ and $\lambda\mapsto\eta_\lambda$ is upper bounded by $\operatorname{rank}(p_1-p_0)$.
\end{remark}

\section{Details on the Geometry of State-Action Frequencies}\label{app:sec:disc-stat-distri}

The set of all state-action frequencies is known to be a polytope in the fully observable case \citep{derman1970finite} and we show that it is combinatorially equivalent to the conditional probability polytope \(\Delta_\mathcal A^\mathcal S\). We show that in the partially observable case the set of feasible state-action frequencies is cut out from this polytope by a finite set of polynomial inequalities. We discuss the special structure of those polynomials and give closed form expressions for them. 

\subsection{The fully observable case}\label{app:sec:geometryfullyobservable}

Let \(\nu_\gamma^{\pi, \mu}\in\Delta_{\mathcal S\times\mathcal S}\) denote the expected number of transitions from \(s\) to \(s^\prime\) given by
    \[ (1-\gamma)\sum_{t=0}^\infty \gamma^t \mathbb P^{\pi, \mu}(s_t = s, s_{t+1}=s^\prime) \quad \text{and } \lim_{T\to\infty} \frac1{T} \sum_{t=0}^{T-1} \mathbb P^{\pi, \mu}(s_t = s, s_{t+1}=s^\prime) \]
respectively. Note that we have
\begin{equation}\label{eq:2}
    \nu_{\gamma}^{\pi, \mu}(s, s^\prime) = \sum_{a\in\mathcal A} \eta_{\gamma}^{\pi, \mu}(s, a)\alpha(s^\prime|s, a),
\end{equation}
hence \(\nu_{\gamma}^{\pi, \mu}\) is the image of \(\eta_{\gamma}^{\pi, \mu}\) under the linear transformation
\begin{equation}\label{app:eq:stationarity}
    f_\alpha\colon\Delta_{\mathcal S\times\mathcal A} \to \Delta_{\mathcal S\times\mathcal S}, \quad \eta\mapsto \left( \sum_{a\in\mathcal A} \eta(s, a)\alpha(s^\prime|s, a) \right)_{s, s^\prime\in\mathcal S}.
\end{equation}
Therefore, we can hope to obtain a characterization of \(\mathcal N^\mu_\gamma\) using this mapping. In order to do so, we would like to understand the structural properties of \(\nu_{\gamma}^{\pi, \mu}\). For \(\gamma = 1\) those distributions have equal marginals since we can compute
\begin{align}\label{app:eq:disstationarity}
    \sum_{s^\prime\in\mathcal S} \nu_1^{\pi, \mu}(s, s^\prime) - \sum_{s^\prime\in\mathcal S} \nu_1^{\pi, \mu}(s^\prime, s) = \lim_{T\to\infty} \frac1T\big(\mathbb P^{\pi, \mu}(s_0 = s) - \mathbb P^{\pi, \mu}(s_{T+1} = s)\big) = 0.
\end{align}
In the discounted case, we compute similarly
\begin{align*}
    \sum_{s^\prime\in\mathcal S} \nu_\gamma^{\pi, \mu}(s, s^\prime) - \gamma\sum_{s^\prime\in\mathcal S} \nu_\gamma^{\pi, \mu}(s^\prime, s) & = (1-\gamma)\left(\sum_{t=0}^\infty \gamma^t \mathbb P^{\pi, \mu}(s_t = s) - \sum_{t=0}^\infty \gamma^{t+1} \mathbb P^{\pi, \mu}(s_{t+1} = s) \right) \\ & = (1-\gamma) \mu(s).
\end{align*}
If we perceive \(\nu^{\pi, \mu}_\gamma\in\Delta_{\mathcal S\times\mathcal S}\) as a matrix, we have shown that
\[ (\nu^{\pi, \mu}_\gamma)^T\mathds{1}_\mathcal S = \gamma (\nu^{\pi, \mu}_\gamma)\mathds{1}_\mathcal S + (1-\gamma)\mu , \]
which motivates the following definition.

We will see that the set of state-action frequencies is the pre-image of the following polytope under a linear map.
\begin{restatable}[Discounted Kirchhoff polytopes]{defi}{defdisckirchpol} \label{def:disckirchpol}
\emph{For a distribution \(\mu\in\Delta_{\mathcal S}\) and \(\gamma\in(0, 1]\) we define the \emph{discounted Kirchhoff polytope} (this is a generalization of a definition by \citealt{weis2010exponential})
    \[ \Xi_\gamma^\mu \coloneqq \big\{ \nu\in\Delta_{\mathcal S\times\mathcal S} \subseteq\mathbb R^{\mathcal S\times\mathcal S} \mid \nu^T\mathds{1}_\mathcal S = \gamma \nu\mathds{1}_\mathcal S + (1-\gamma)\mu \big\},  \]
    where \(\mathds{1}_\mathcal S\in\mathbb R^{\mathcal S}\) is the all one vector.}
\end{restatable}

So far, we have observed that \(f_\alpha(\eta^{\pi, \mu}_\gamma)\in \Xi_\gamma^\mu\), i.e.,  that 
\[ \Psi_\gamma^\mu\colon \Delta_{\mathcal A}^{\mathcal S}\to\Delta_{\mathcal S\times\mathcal A}, \quad \pi\mapsto \eta^{\pi, \mu}_\gamma \]
maps to \(f_\alpha^{-1}(\Xi_\gamma^\mu)\). In order to see that this mapping is surjective on \(f_\alpha^{-1}(\Xi_\gamma^\mu)\) we show that its right inverse is given through conditioning. The following proposition uses the ergodicity assumption. 

\begin{restatable}{prop}{propinvpara}\label{prop:invpara}
Let \(\gamma\in(0, 1]\) and \(\eta\in f_\alpha^{-1}(\Xi^\mu_\gamma)\) and let \(\rho\in\Delta_\mathcal S\) denote the state marginal of \(\eta\). Set 
    \[ \pi(\cdot|s)\coloneqq \begin{cases}\; \eta(\cdot|s) = \eta(s, \cdot)/\rho(s) \quad & \text{if } \rho(s)>0 \\\; \text{arbitrary element in } \Delta_\mathcal A & \text{if } \rho(s)=0, \end{cases} \]
then we have \(\eta^{\pi, \mu}_\gamma = \eta\).
\end{restatable}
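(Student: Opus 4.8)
The plan is to invoke the uniqueness characterization of Proposition~\ref{prop:bellmanflow}: the frequency $\eta^{\pi,\mu}_\gamma$ is the \emph{unique} element of $\Delta_{\mathcal S\times\mathcal A}$ solving the discounted stationarity equation $\eta = \gamma P_\pi^T\eta + (1-\gamma)(\mu\ast\pi)$, so it suffices to verify that the prescribed $\eta$ (which lies in $\Delta_{\mathcal S\times\mathcal A}$ by hypothesis) satisfies this fixed-point equation. For $\gamma=1$ this uniqueness relies on the ergodicity Assumption~\ref{as:ergodicity}, whereas for $\gamma\in(0,1)$ it is automatic; in either case it reduces the claim to a direct verification.

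The first step is the conditioning identity $\eta(s,a)=\rho(s)\pi(a|s)$ for all $(s,a)$, i.e.\ $\eta = \rho\ast\pi$. When $\rho(s)>0$ this is the definition of $\pi$, and it shows $\pi(\cdot|s)\in\Delta_{\mathcal A}$ since $\sum_a\eta(s,a)=\rho(s)$; when $\rho(s)=0$, nonnegativity of $\eta$ forces $\eta(s,a)=0$, so the identity persists irrespective of the arbitrary choice of $\pi(\cdot|s)$. The second step reads off the row and column sums of $\nu\coloneqq f_\alpha(\eta)$: using $\sum_{s'}\alpha(s'|s,a)=1$ gives $(\nu\mathds 1)(s)=\sum_a\eta(s,a)=\rho(s)$, while $\eta=\rho\ast\pi$ gives $(\nu^T\mathds 1)(s')=\sum_{s,a}\eta(s,a)\alpha(s'|s,a)=(p_\pi^T\rho)(s')$. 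Substituting these two identities into the defining relation of the discounted Kirchhoff polytope $\Xi^\mu_\gamma$ collapses the membership $\eta\in f_\alpha^{-1}(\Xi^\mu_\gamma)$ into exactly the Bellman-flow equation $\rho = \gamma p_\pi^T\rho + (1-\gamma)\mu$ for the state marginal; equivalently, by the marginal part of Proposition~\ref{prop:bellmanflow}, $\rho=\rho^{\pi,\mu}_\gamma$.

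It then remains to plug back in. Expanding the right-hand side of the state-action equation with $P_\pi(s',a'|s,a)=\alpha(s'|s,a)\pi(a'|s')$ and using the column-sum identity, one gets
\[
\big(\gamma P_\pi^T\eta + (1-\gamma)(\mu\ast\pi)\big)(s',a')
= \pi(a'|s')\big[\gamma(p_\pi^T\rho)(s') + (1-\gamma)\mu(s')\big]
= \pi(a'|s')\,\rho(s') = \eta(s',a'),
\]
where the middle equality is precisely the state-stationarity just established and the last uses $\eta=\rho\ast\pi$. Hence $\eta$ solves the fixed-point equation, and uniqueness yields $\eta^{\pi,\mu}_\gamma=\eta$.

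I expect the main obstacle to be the middle step: faithfully translating membership in $\Xi^\mu_\gamma$ into the Bellman-flow relation for $\rho$. This hinges on correctly identifying the row sums of $\nu=f_\alpha(\eta)$ with $\rho$ and the column sums with $p_\pi^T\rho$ — the latter being exactly where the conditioning identity $\eta=\rho\ast\pi$ enters — and then matching the resulting equation against the unique-stationary-distribution statement of Proposition~\ref{prop:bellmanflow}. The degenerate states with $\rho(s)=0$, on which $\pi$ is chosen arbitrarily, must also be tracked to confirm that the factorization $\eta=\rho\ast\pi$ and hence the entire computation survives there.
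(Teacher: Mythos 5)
Your proof is correct and follows essentially the same route as the paper's: verify that the prescribed $\eta$ satisfies the discounted stationarity equation $\eta = \gamma P_\pi^T\eta + (1-\gamma)(\mu\ast\pi)$ and then invoke the uniqueness characterization of Proposition~\ref{prop:bellmanflow}. The only cosmetic difference is that you factor the computation through the state-marginal equation $\rho = \gamma p_\pi^T\rho + (1-\gamma)\mu$, whereas the paper carries out the same identities (conditioning $\eta=\rho\ast\pi$, row/column sums of $f_\alpha(\eta)$, and the Kirchhoff relation) in a single state-action-level calculation.
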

\begin{proof}
We calculate
\begin{align*}
    \gamma (P^\pi)^T\eta (s, a) & = \gamma \sum_{s^\prime, a^\prime} \alpha(s|s^\prime, a^\prime) \pi(a|s) \eta(s^\prime, a^\prime) = \gamma \pi(a|s) \sum_{s^\prime, a^\prime} \alpha(s|s^\prime, a^\prime) \eta(s^\prime, a^\prime) \\ & = \gamma \pi(a|s) \sum_{s^\prime} \nu(s^\prime, s) = \pi(a|s) \left( \sum_{s^\prime} \nu(s, s^\prime) - (1-\gamma) \mu(s) \right) \\ &  = \pi(a|s)\rho(s) - (1-\gamma) \pi(a|s)\mu(s) = \eta(s, a) - (1-\gamma) (\mu\ast\pi)(s, a).
\end{align*}
The unique characterization from Proposition \ref{prop:bellmanflow} of \(\eta^{\pi, \mu}_\gamma\) yields the assertion.
\end{proof}

The proposition states that we can reconstruct the policy from the state-action frequencies by conditioning and is well known in the context of the dual linear programming formulation of MDPs \citep{kallenberg1994survey}. Hence, it will be convenient later to work under the following assumption in which ensures that policies in \(\Delta_\mathcal A^\mathcal S\) are one-to-one with state-action frequencies. 

\asspositivity*

Note that this positivity assumption holds in particular, if either \(\alpha>0\) and $\gamma>0$ or $\gamma<1$ and \(\mu>0\) or entrywise. Indeed, if $\alpha>0$, then the transition kernel $p_\pi$ is strictly positive for any policy since
    \[ p_\pi(s'|s) = \sum_a(\pi\circ\beta)(a|s) \alpha(s'|s, a) > 0,  \]
since $(\pi\circ\beta)(a|s)>0$ for some $a\in\mathcal A$. Since $\rho^{\pi, \mu}_\gamma$ is discounted stationary with respect to $p_\pi$ (Proposition \ref{prop:bellmanflow}), it holds that 
    \[ \rho^{\pi, \mu}_\gamma(s) = \gamma\sum_{s'}\rho^{\pi, \mu}_\gamma(s')p_\pi(s|s') + (1-\gamma)\mu(s) >0  \]
since $\rho^{\pi, \mu}_\gamma(s')>0$ for some $s'\in\mathcal S$. If $\mu>0$ and $\gamma<1$, then $\rho^{\pi, \mu}_\gamma(s)\ge(1-\gamma)\mu(s)>0$.
As a consequence of Proposition \ref{prop:invpara}, we obtain the following characterization of \(\mathcal N_\gamma^\mu\).  
\corchardisstadis*
Instead of proving this proposition directly, we first present the following version of it.
\begin{proposition}
Let \((\mathcal A, \mathcal S, \alpha, r)\) be an MDP and \(\gamma\in(0, 1]\). It holds that \(\mathcal N_{\gamma}^\mu = f_\alpha^{-1}(\Xi_{\gamma}^{\mu})\).
\end{proposition}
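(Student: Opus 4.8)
The plan is to prove the two set inclusions separately, both of which are essentially delivered by results assembled earlier in this subsection. For the inclusion $\mathcal N_\gamma^\mu\subseteq f_\alpha^{-1}(\Xi_\gamma^\mu)$, I would simply invoke the marginal computation carried out just before Definition~\ref{def:disckirchpol}: for any policy $\pi\in\Delta_\mathcal A^\mathcal S$, the image $\nu_\gamma^{\pi,\mu}=f_\alpha(\eta_\gamma^{\pi,\mu})$ was shown to satisfy the discounted balance relation $\sum_{s'}\nu_\gamma^{\pi,\mu}(s,s')=\gamma\sum_{s'}\nu_\gamma^{\pi,\mu}(s',s)+(1-\gamma)\mu(s)$, which is exactly the defining equation of $\Xi_\gamma^\mu$. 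Hence $f_\alpha(\eta_\gamma^{\pi,\mu})\in\Xi_\gamma^\mu$, i.e. every state-action frequency lies in $f_\alpha^{-1}(\Xi_\gamma^\mu)$, which gives the first inclusion.

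For the reverse inclusion $f_\alpha^{-1}(\Xi_\gamma^\mu)\subseteq\mathcal N_\gamma^\mu$, I would fix an arbitrary $\eta\in f_\alpha^{-1}(\Xi_\gamma^\mu)$ and reconstruct a generating policy by conditioning, precisely as in Proposition~\ref{prop:invpara}. Letting $\rho$ denote the state marginal of $\eta$, one sets $\pi(\cdot\,|\,s)=\eta(s,\cdot)/\rho(s)$ on the states with $\rho(s)>0$ and makes an arbitrary choice in $\Delta_\mathcal A$ otherwise. Proposition~\ref{prop:invpara} then asserts $\eta_\gamma^{\pi,\mu}=\eta$, so $\eta\in\mathcal N_\gamma^\mu$, completing the equality. (Since $\eta\in\Delta_{\mathcal S\times\mathcal A}$ by definition of $f_\alpha^{-1}(\Xi_\gamma^\mu)$, it is a genuine probability distribution and conditioning is well defined.)

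The substantive content therefore lives entirely in Proposition~\ref{prop:invpara}, and that is where I expect the only real work. The hard step is verifying that the conditioned $\pi$ actually reproduces $\eta$: one substitutes $\pi(a|s)=\eta(s,a)/\rho(s)$ into $\gamma(P_\pi)^T\eta$, factors out $\pi(a|s)$, and uses the membership $f_\alpha(\eta)\in\Xi_\gamma^\mu$ — written as $\gamma\sum_{s'}\nu(s',s)=\sum_{s'}\nu(s,s')-(1-\gamma)\mu(s)$ with $\nu=f_\alpha(\eta)$ — to rewrite the marginal sum and arrive at the discounted Bellman flow identity $\gamma(P_\pi)^T\eta=\eta-(1-\gamma)(\mu\ast\pi)$. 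The uniqueness of the solution to this fixed-point identity, supplied by Proposition~\ref{prop:bellmanflow}, then forces $\eta=\eta_\gamma^{\pi,\mu}$. The one subtlety to handle with care is the treatment of states with $\rho(s)=0$: the arbitrary choice of $\pi(\cdot\,|\,s)$ there must be shown not to affect the identity, which follows because every term in the flow equation indexed by such $s$ is multiplied by $\rho(s)=0$.
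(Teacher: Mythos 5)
Your proof is correct and follows exactly the paper's own argument: the first inclusion via the discounted balance computation preceding Definition~\ref{def:disckirchpol}, and the reverse inclusion via the conditioning construction of Proposition~\ref{prop:invpara} combined with the uniqueness from Proposition~\ref{prop:bellmanflow}. Your additional remarks on the substance of Proposition~\ref{prop:invpara} (including the $\rho(s)=0$ case) match the paper's proof of that lemma as well.
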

\begin{proof}
By \plaineqref{app:eq:stationarity} and \plaineqref{app:eq:disstationarity}, it holds that $f_\alpha(\mathcal N_\gamma^\mu)\subseteq \Xi_\gamma^\mu$ and thus $\mathcal N_\gamma^\mu\subseteq f_\alpha^{-1}(\Xi_\gamma^\mu)$. However, by Proposition\ref{prop:invpara}, for every $\eta\in f_\alpha(\Xi_\gamma^\mu)$ there is a policy $\pi\in\Delta_\mathcal A^\mathcal S$ such that $\eta^{\pi, \mu}_\gamma = \eta$ and hence it holds that $f_\alpha^{-1}(\Xi_\gamma^\mu)\subseteq f_\alpha^{-1}(\Xi_\gamma^\mu)$.
\end{proof}
\begin{proof}[Proof of Proposition~\ref{cor:chardisstadis}]
By the preceding proposition $\eta\in\mathcal N_\gamma^\mu$ is equivalent to $\eta\in\Delta_{\mathcal S\times\mathcal A}$ and $\nu\coloneqq f_\alpha(\eta)\in\Xi_\gamma^\mu$. Using the definition of $\Xi_\gamma^\mu$ this equivalent to 
    \[ \sum_{s'} \nu(s,s') = \gamma\sum_{s'}\nu(s',s) + (1-\gamma) \mu(s) \]
for all $s\in\mathcal S$. Plugging in the definition of $f_\alpha$ we see that the term on the left hand side is equivalent to
    \[ \sum_{s'}\sum_a \eta(s, a)\alpha(s'|s, a) = \sum_a\eta(s, a) = \langle \delta_s\otimes\mathds{1}_\mathcal A, \eta \rangle_{\mathcal S\times\mathcal A}. \]
The first term of the right hand side is precisely
    \[ \gamma\sum_{s'}\sum_a \eta(s', a)\alpha(s|s', a) = \langle \gamma \alpha(s|\cdot, \cdot), \eta\rangle_{\mathcal S\times\mathcal A}. \]
Hence, we have seen that $f_\alpha(\eta)\in\Xi_\gamma^\mu$ is equivalent to the condition 
\begin{equation}\label{app:eq:help}
    \langle w_\gamma^s, \eta\rangle_{\mathcal S\times\mathcal A} = (1-\gamma)\mu(s) \quad \text{for all } s\in\mathcal S.
\end{equation}
Note that 
    \[ \{\eta\mid \langle w_\gamma^s, \eta\rangle_{\mathcal S\times\mathcal A} =(1-\gamma) \mu(s) \text{ for all } s\in\mathcal S\} = \eta_0 + \{w_\gamma^s\mid s\in\mathcal S\}^\perp \]
for an arbitrary element $\eta_0$ satisfying \plaineqref{app:eq:help}. This shows the first equation in \plaineqref{eq:explicitdescriptionN}. The second equation follows from the observation that $\sum_s w_\gamma^s=(1-\gamma)\mathds{1}_\mathcal S$. Hence, for $\gamma<1$ it holds that
\begin{align*}
    \left(\eta_0 + \{w_\gamma^s\mid s\in\mathcal S\}^\perp\right)\cap\Delta_{\mathcal S\times\mathcal A} & = \left(\eta_0 + \left(\{w_\gamma^s\mid s\in\mathcal S\}\cup\{\mathds{1}_\mathcal S\right)^\perp\right)\cap[0, \infty)^{\mathcal S\times\mathcal A} \\ & = \left(\eta_0 +\{w_\gamma^s\mid s\in\mathcal S\}^\perp\right)\cap[0, \infty)^{\mathcal S\times\mathcal A}.
\end{align*}
\end{proof}

\subsubsection{Derivative of the discounted state-action frequencies}\label{app:subsubsec:jacobian}
In this section we discuss the Jacobian of the parametrization $\pi\mapsto\eta^\pi$ of the discounted state-action frequencies. One motivation for this is that this Jacobian plays an important role in the relation of critical points in the policy space and the space of discounted state-action frequencies. Note that \(\Psi^\mu_\gamma(\pi) = (1-\gamma)(1-\gamma P_\pi^T)^{-1}(\mu\ast\pi)\) is well defined, whenever \(\lVert P_\pi \rVert_{2} < \gamma^{-1}\). Hence, we can extend \(\Psi^\mu_\gamma\) onto
the neighborhood \(\left\{ \pi\in\mathbb R^{\mathcal S\times\mathcal A} \mid \lVert P_\pi \rVert_{2} < \gamma^{-1} \right\}\) of \(\Delta_\mathcal A^\mathcal S\), which enables us to compute the Jacobian of \(\Psi_\gamma^\mu\).

\begin{proposition}[Jacobian of \(\Psi_\gamma^\mu\)]\label{prop:jacpsi}
For any policy \(\pi\in\Delta_{\mathcal A}^\mathcal S\) and \(s\in\mathcal S, a\in\mathcal A\) it holds that 
    \begin{equation}\label{eq:partderdisdis}
        \partial_{(s, a)} \Psi^\mu_\gamma(\pi) = (I-\gamma P_{\pi}^T)^{-1}(\rho^{\pi, \mu}_\gamma\ast\partial_{(s, a)} \pi)  = \rho^{\pi, \mu}_\gamma(s)(I-\gamma P_\pi^T)^{-1}(\delta_{s}\otimes\delta_{a}), 
    \end{equation}
    where
    \[ (\rho^{\pi, \mu}_\gamma\ast\partial_{(s, a)}\pi)(s', a') = \rho^{\pi, \mu}_\gamma(s')\partial_{(s, a)} \pi(a'|s') = \rho^{\pi, \mu}_\gamma(s)(\delta_{s}\otimes \delta_{a})(s', a').  \]
Hence, \(\partial_{(s, a)} \Psi^\mu_\gamma(\pi)\) is identical to the \((s, a)\)-th column of \((I-\gamma P^T_\pi)^{-1}\) up to the scaling factor of \(\rho^{\pi, \mu}_\gamma(s)\). In particular, if \(\rho^{\pi, \mu}_\gamma(s)>0\) for all \(s\in\mathcal S\), the Jacobian \(D\Psi_\gamma^\mu\) has full rank.
\end{proposition}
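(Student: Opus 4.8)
The plan is to differentiate the fixed-point characterization of $\eta^{\pi,\mu}_\gamma$ from Proposition~\ref{prop:bellmanflow} rather than the explicit inverse $\Psi_\gamma^\mu=(1-\gamma)(I-\gamma P_\pi^T)^{-1}(\mu\ast\pi)$, since this keeps everything linear and avoids the matrix-inverse derivative. Working on the open neighborhood $\{\pi\mid\lVert P_\pi\rVert_2<\gamma^{-1}\}$ on which $\Psi_\gamma^\mu$ is smooth, I would start from $\eta=\gamma P_\pi^T\eta+(1-\gamma)(\mu\ast\pi)$ with $\eta=\Psi_\gamma^\mu(\pi)$ and differentiate both sides with respect to the single entry $\pi(a|s)$. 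Because $P_\pi$ and $\mu\ast\pi$ are linear in $\pi$, this produces
\[
 (I-\gamma P_\pi^T)\,\partial_{(s,a)}\eta = \gamma\big(\partial_{(s,a)}P_\pi^T\big)\eta + (1-\gamma)\,\partial_{(s,a)}(\mu\ast\pi),
\]
and hence $\partial_{(s,a)}\eta=(I-\gamma P_\pi^T)^{-1}\big[\gamma(\partial_{(s,a)}P_\pi^T)\eta+(1-\gamma)\partial_{(s,a)}(\mu\ast\pi)\big]$.

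The main step is to simplify the bracket to the single vector $\rho^{\pi,\mu}_\gamma(s)(\delta_s\otimes\delta_a)$. A direct index computation gives $\partial_{(s,a)}(\mu\ast\pi)=\mu(s)(\delta_s\otimes\delta_a)$, and, writing out $P_\pi^T[(s',a'),(s_0,a_0)]=\alpha(s'|s_0,a_0)\pi(a'|s')$, that $\gamma(\partial_{(s,a)}P_\pi^T)\eta$ is supported on the single coordinate $(s,a)$ with value $\gamma\sum_{s_0,a_0}\alpha(s|s_0,a_0)\eta(s_0,a_0)=\gamma\sum_{s_0}\nu^{\pi,\mu}_\gamma(s_0,s)$, where $\nu^{\pi,\mu}_\gamma=f_\alpha(\eta)$ from \plaineqref{eq:2}. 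Summing the stationarity identity over the action coordinate of $s$ yields $\rho^{\pi,\mu}_\gamma(s)=\gamma\sum_{s_0}\nu^{\pi,\mu}_\gamma(s_0,s)+(1-\gamma)\mu(s)$, so the two bracket terms combine to $\big(\rho^{\pi,\mu}_\gamma(s)-(1-\gamma)\mu(s)\big)(\delta_s\otimes\delta_a)+(1-\gamma)\mu(s)(\delta_s\otimes\delta_a)=\rho^{\pi,\mu}_\gamma(s)(\delta_s\otimes\delta_a)$. I expect this to be the only real obstacle: it is purely bookkeeping of the transpose and of the coordinate deltas, but one must recognize that the Kirchhoff/stationarity relation is exactly what cancels the $(1-\gamma)\mu(s)$ contributions and produces the clean coefficient $\rho^{\pi,\mu}_\gamma(s)$. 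Substituting back gives $\partial_{(s,a)}\Psi_\gamma^\mu(\pi)=\rho^{\pi,\mu}_\gamma(s)(I-\gamma P_\pi^T)^{-1}(\delta_s\otimes\delta_a)$, and the intermediate expression in \plaineqref{eq:partderdisdis} follows at once since $(\rho^{\pi,\mu}_\gamma\ast\partial_{(s,a)}\pi)(s',a')=\rho^{\pi,\mu}_\gamma(s')\delta_{ss'}\delta_{aa'}=\rho^{\pi,\mu}_\gamma(s)(\delta_s\otimes\delta_a)(s',a')$.

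For the rank assertion I would observe that the columns of the Jacobian $D\Psi_\gamma^\mu$, indexed by $(s,a)\in\mathcal S\times\mathcal A$, are precisely the columns of the invertible matrix $(I-\gamma P_\pi^T)^{-1}$ rescaled by the factor $\rho^{\pi,\mu}_\gamma(s)$. Since an invertible matrix has linearly independent columns and rescaling each column by a strictly positive scalar preserves linear independence, the hypothesis $\rho^{\pi,\mu}_\gamma(s)>0$ for all $s$ makes the square matrix $D\Psi_\gamma^\mu$ of full rank.

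As an alternative route one could differentiate $\Psi_\gamma^\mu=(1-\gamma)(I-\gamma P_\pi^T)^{-1}(\mu\ast\pi)$ directly using $\partial(M^{-1})=-M^{-1}(\partial M)M^{-1}$ with $M=I-\gamma P_\pi^T$; this is algebraically equivalent and reduces to the identical stationarity simplification, so I would present the fixed-point differentiation above as the cleaner argument.
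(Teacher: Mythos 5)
Your proof is correct and takes essentially the same route as the paper: the paper differentiates the explicit formula $(1-\gamma)(I-\gamma P_\pi^T)^{-1}(\mu\ast\pi)$ via the matrix-inverse derivative rule, which produces exactly your implicitly-differentiated equation $(I-\gamma P_\pi^T)\,\partial_{(s,a)}\eta = \gamma(\partial_{(s,a)}P_\pi^T)\eta + (1-\gamma)\,\partial_{(s,a)}(\mu\ast\pi)$, and then performs the same cancellation using stationarity of $\rho_\gamma^{\pi,\mu}$ (your identity $\gamma\sum_{s_0}\nu_\gamma^{\pi,\mu}(s_0,s)=\rho_\gamma^{\pi,\mu}(s)-(1-\gamma)\mu(s)$ is precisely the relation $\gamma p_\pi^T\rho_\gamma^{\pi,\mu}+(1-\gamma)\mu=\rho_\gamma^{\pi,\mu}$ that the paper invokes, written through $\nu$). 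The full-rank argument via scaled columns of the invertible matrix $(I-\gamma P_\pi^T)^{-1}$ is also identical to the paper's.
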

\begin{proof}
Recall that for invertible matrices \(A(t)\), it holds that \(\partial_t A(t)^{-1} = - A(t)^{-1}(\partial_t  A(t)) A(t)^{-1}\). We compute
\begin{align*}
    (1-\gamma)^{-1}\partial_{(s, a)}\Psi^{\mu}_\gamma(\pi) & = \partial_{(s, a)}(I-\gamma P_{\pi}^T)^{-1}(\mu\ast\pi) \\
    & = (\partial_{(s, a)}(I-\gamma P_{\pi}^T)^{-1})(\mu\ast\pi) + (I-\gamma P_{\pi}^T)^{-1} \partial_{(s, a)}(\mu\ast\pi) \\
    & = -(1-\gamma)^{-1}(I-\gamma P_{\pi}^T)^{-1}\partial_{(s, a)}(I-\gamma P_{\pi}^T) \eta^{\pi, \mu}_\gamma \\ & \quad\, + (I-\gamma P_{\pi}^T)^{-1}(\mu\ast\partial_{(s, a)} \pi) \\
    & = (I-\gamma P_{\pi}^T)^{-1}\left((1-\gamma)^{-1}\gamma (\partial_{(s, a)}P_{\pi}^T) \eta^{\pi, \mu}_\gamma + \mu\ast\partial_{(s, a)} \pi\right).
\end{align*}
Further, direct computation shows
\begin{align*}
    ((\partial_{(s, a)}P_{\pi}^T)\eta^{\pi, \mu}_\gamma)(s, a) & = \partial_{(s, a)}\pi(a|s) \sum_{s^\prime, a^\prime} \alpha(s|s^\prime, a^\prime)  \pi(a^\prime|s^\prime) \rho^{\pi, \mu}_\gamma(s^\prime) \\ & = (p_\pi^T\rho^{\pi, \mu}_\gamma\ast\partial_{(s, a)}\pi)(s, a).
\end{align*}
Using the fact that \(\rho^{\pi, \mu}_\gamma\) is the discounted stationary distribution, yields
\begin{align*}
     (1-\gamma)^{-1}\gamma (\partial_{(s, a)}P_{\pi}^T) \eta^{\pi, \mu}_\gamma + \mu\ast\partial_{(s, a)} \pi & = ((1-\gamma)^{-1}\gamma p_{\pi}^T \rho^{\pi, \mu}_\gamma + \mu) \ast\partial_{(s, a)}\pi \\ & = (1-\gamma)^{-1}\rho^{\pi, \mu}_\gamma \ast\partial_{(s, a)}\pi,
\end{align*}
which shows \plaineqref{eq:partderdisdis}. Note that \((I-\gamma P_\pi^T)^{-1}(\delta_{s}\otimes\delta_{a})\) is precisely the \((s_0, a_0)\)-th column of the matrix \((I-\gamma P^T_\pi)^{-1}\). Those columns are linearly independent, and so are the partial derivatives \(\partial_{(s, a)} \Psi^\mu_\gamma(\pi)\), given that the discounted stationary distribution \(\rho^{\pi, \mu}_\gamma\) vanishes nowhere.
\end{proof}

\begin{corollary}[Dimension of \(\mathcal N_\gamma^\mu\)]
Assume that \(\rho^{\pi, \mu}_\gamma>0\) entrywise for some policy \(\pi\in\operatorname{int}(\Delta_\mathcal A^\mathcal S)\). Then we have
    \[ \dim(\mathcal N_\gamma^\mu) = \dim(\Delta_\mathcal A^\mathcal S) = \lvert \mathcal S \rvert (\lvert \mathcal A \rvert - 1). \]
\end{corollary}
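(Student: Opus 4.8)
The plan is to obtain both inequalities $\dim(\mathcal N_\gamma^\mu)\le\lvert\mathcal S\rvert(\lvert\mathcal A\rvert-1)$ and $\dim(\mathcal N_\gamma^\mu)\ge\lvert\mathcal S\rvert(\lvert\mathcal A\rvert-1)$ from the single observation that $\mathcal N_\gamma^\mu=\Psi_\gamma^\mu(\Delta_\mathcal A^\mathcal S)$, combined with the Jacobian computation in Proposition~\ref{prop:jacpsi}.

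For the upper bound I would use that $\Psi_\gamma^\mu$ is a rational, hence semialgebraic, map which is moreover $C^1$ on a neighborhood of $\Delta_\mathcal A^\mathcal S$: the denominator $\det(I-\gamma p_\pi^T)$ has no zeros there, for $\gamma<1$ because the spectral radius of $p_\pi$ is at most one, and for $\gamma=1$ by Assumption~\ref{as:ergodicity}. Since the dimension of the image of a semialgebraic set under a semialgebraic map cannot exceed that of the domain, and $\dim(\Delta_\mathcal A^\mathcal S)=\lvert\mathcal S\rvert(\lvert\mathcal A\rvert-1)$, the bound $\dim(\mathcal N_\gamma^\mu)\le\lvert\mathcal S\rvert(\lvert\mathcal A\rvert-1)$ follows immediately.

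For the lower bound, let $\pi\in\operatorname{int}(\Delta_\mathcal A^\mathcal S)$ be the policy for which $\rho^{\pi,\mu}_\gamma>0$ entrywise. The affine hull of $\Delta_\mathcal A^\mathcal S$ has tangent space $T=\{v\in\mathbb R^{\mathcal S\times\mathcal A}\mid \sum_a v_{sa}=0 \text{ for all } s\in\mathcal S\}$ of dimension $\lvert\mathcal S\rvert(\lvert\mathcal A\rvert-1)$. By Proposition~\ref{prop:jacpsi}, the differential of the extended $\Psi_\gamma^\mu$ at $\pi$ sends the ambient basis direction $(s,a)$ to $\rho^{\pi,\mu}_\gamma(s)$ times the $(s,a)$-th column of $(I-\gamma P_\pi^T)^{-1}$; since $\rho^{\pi,\mu}_\gamma>0$ and $(I-\gamma P_\pi^T)^{-1}$ is invertible, these images are linearly independent, so the differential is injective and in particular its restriction $D\Psi_\gamma^\mu(\pi)|_T$ has rank $\lvert\mathcal S\rvert(\lvert\mathcal A\rvert-1)$. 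Hence $\Psi_\gamma^\mu$ is an immersion of $\operatorname{int}(\Delta_\mathcal A^\mathcal S)$ at $\pi$, and by the constant-rank (immersion) theorem the image of a relatively open neighborhood of $\pi$ is an embedded $C^1$-submanifold of $\Delta_{\mathcal S\times\mathcal A}$ of dimension $\lvert\mathcal S\rvert(\lvert\mathcal A\rvert-1)$ contained in $\mathcal N_\gamma^\mu$. Combining the two inequalities gives the claimed equality.

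I expect the step requiring the most care to be reconciling the Jacobian of Proposition~\ref{prop:jacpsi}, which lives on the unconstrained ambient space and has rank $\lvert\mathcal S\rvert\lvert\mathcal A\rvert$ (consistent with the extended map not landing in the simplex), with the correct count on the tangent space $T$ of the row-stochasticity constraints. Once injectivity of the ambient differential is established, restricting to $T$ and reading off rank $\lvert\mathcal S\rvert(\lvert\mathcal A\rvert-1)$ is immediate, but it is the point where the codimension-$\lvert\mathcal S\rvert$ simplex constraints must be accounted for accurately; this, rather than the routine upper bound, is the crux.
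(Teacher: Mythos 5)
Your argument is correct for the discounted case and is essentially the paper's own proof written out in full: the paper likewise invokes Proposition~\ref{prop:jacpsi} to conclude that \(\Psi_\gamma^\mu\) is full rank near \(\pi\), and hence \(\dim(\mathcal N_\gamma^\mu) = \dim(\Psi_\gamma^\mu(\Delta_\mathcal A^\mathcal S)) = \dim(\Delta_\mathcal A^\mathcal S)\). You supply the two facts the paper leaves implicit — that images of semialgebraic sets under semialgebraic maps cannot gain dimension, and that an immersion at a single interior point forces the image to contain an embedded submanifold of full dimension — and you correctly handle the restriction of the ambient-space Jacobian to the tangent space \(T\) of the affine hull, which is exactly the rank-counting subtlety the paper glosses over.

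One claim in your upper-bound paragraph is false, although it does not damage the result as scoped by the paper: for \(\gamma = 1\) the determinant \(\det(I - p_\pi^T)\) vanishes for \emph{every} policy, since any row-stochastic matrix satisfies \(p_\pi\mathds{1} = \mathds{1}\) and therefore has eigenvalue \(1\). Assumption~\ref{as:ergodicity} only makes this eigenvalue simple; it cannot make \(I - p_\pi^T\) invertible. Consequently neither the rational formula for \(\Psi_\gamma^\mu\) nor Proposition~\ref{prop:jacpsi} (whose expression involves \((I-\gamma P_\pi^T)^{-1}\)) is available at \(\gamma = 1\), and your proof — like the paper's own — is confined to \(\gamma\in(0,1)\). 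If one wanted the mean-reward case as well, a separate argument would be needed, e.g., via the group inverse of \(I - p_\pi\) or a direct semialgebraic description of the stationarity equations.
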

\begin{proof}
By Proposition \ref{prop:jacpsi} the mapping \(\Psi_\gamma^\mu\) is full rank in a neighborhood of \(\pi\) and hence, we have 
    \[\dim(\mathcal N_\gamma^\mu) = \dim(\Psi_\gamma^\mu(\Delta_\mathcal A^\mathcal S)) = \dim(\Delta_\mathcal A^\mathcal S).  \]
\end{proof}

Let us consider a parametrized policy model \(\Pi_\Theta = \{\pi_\theta \mid \theta\in\Theta \}\) with differentiable parametrization \(\theta\mapsto\pi_\theta\). 

\begin{proposition}[Parameter derivatives of discounted state-action frequencies]
It holds that 
    \[ \partial_{\theta_i} \eta^{\pi_\theta, \mu}_\gamma = (I-\gamma P_{\pi_\theta}^T)^{-1}(\rho^{\pi_\theta, \mu}_\gamma\ast\partial_{\theta_i}\pi_\theta), \quad \text{where } (\rho^{\pi_\theta, \mu}_\gamma\ast\partial_{\theta_i}\pi_\theta)(s, a) = \rho^{\pi_\theta, \mu}_\gamma(s)\partial_{\theta_i} \pi_{\theta}(a|s).  \]
\end{proposition}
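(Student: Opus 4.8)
The plan is to treat this proposition as a direct corollary of the Jacobian computation in Proposition~\ref{prop:jacpsi} via the chain rule. First I would recall that, by definition, $\eta^{\pi_\theta, \mu}_\gamma = \Psi_\gamma^\mu(\pi_\theta)$, and that $\Psi_\gamma^\mu$ has already been extended to the open neighborhood $\{\pi\in\mathbb{R}^{\mathcal{S}\times\mathcal{A}}\mid \lVert P_\pi\rVert_2 < \gamma^{-1}\}$ of $\Delta_\mathcal{A}^\mathcal{S}$, on which it is differentiable. Since $\theta\mapsto\pi_\theta$ is differentiable and takes values in $\Delta_\mathcal{A}^\mathcal{S}$, the composition $\theta\mapsto\eta^{\pi_\theta, \mu}_\gamma$ is differentiable and the chain rule applies through the entries $\pi_\theta(a|s)$ of the policy.

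Then I would expand, summing over all entries $(s,a)$ of the policy,
\[
\partial_{\theta_i}\eta^{\pi_\theta, \mu}_\gamma = \sum_{s, a}\partial_{(s, a)}\Psi_\gamma^\mu(\pi_\theta)\,\partial_{\theta_i}\pi_\theta(a|s).
\]
Substituting the second expression for the partial derivative from Proposition~\ref{prop:jacpsi}, namely $\partial_{(s, a)}\Psi_\gamma^\mu(\pi_\theta) = \rho^{\pi_\theta, \mu}_\gamma(s)(I-\gamma P_{\pi_\theta}^T)^{-1}(\delta_s\otimes\delta_a)$, yields
\[
\partial_{\theta_i}\eta^{\pi_\theta, \mu}_\gamma = \sum_{s, a}\rho^{\pi_\theta, \mu}_\gamma(s)\,\partial_{\theta_i}\pi_\theta(a|s)\,(I-\gamma P_{\pi_\theta}^T)^{-1}(\delta_s\otimes\delta_a).
\]
By linearity of the matrix $(I-\gamma P_{\pi_\theta}^T)^{-1}$, which does not depend on the summation indices, I would factor it out of the sum, leaving the vector $\sum_{s,a}\rho^{\pi_\theta, \mu}_\gamma(s)\,\partial_{\theta_i}\pi_\theta(a|s)\,(\delta_s\otimes\delta_a)$ inside. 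This vector is precisely $\rho^{\pi_\theta, \mu}_\gamma\ast\partial_{\theta_i}\pi_\theta$, since its $(s,a)$-entry equals $\rho^{\pi_\theta, \mu}_\gamma(s)\,\partial_{\theta_i}\pi_\theta(a|s)$, which establishes both the main identity and the accompanying formula for the $\ast$-product.

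There is essentially no hard step here: all the substantive content was already carried out in Proposition~\ref{prop:jacpsi}, and what remains is bookkeeping with the chain rule. The only points needing minor care are the legitimacy of the chain rule, which is supplied by the differentiability of the extended $\Psi_\gamma^\mu$ on a neighborhood of $\Delta_\mathcal{A}^\mathcal{S}$ noted before Proposition~\ref{prop:jacpsi}, and the factoring of $(I-\gamma P_{\pi_\theta}^T)^{-1}$ out of the sum. Alternatively, if one preferred to avoid the chain rule through the extended map, the identity could be obtained by repeating the direct differentiation from the proof of Proposition~\ref{prop:jacpsi} with $\partial_{\theta_i}$ in place of $\partial_{(s,a)}$; but the chain-rule route is shorter and avoids recomputing the inverse-derivative identity.
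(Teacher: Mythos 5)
Your proposal is correct and follows exactly the paper's route: the paper's proof of this proposition is the one-line observation that it ``follows directly from the application of the chain rule and \plaineqref{eq:partderdisdis}'' (the Jacobian formula of Proposition~\ref{prop:jacpsi}), which is precisely the chain-rule-plus-substitution argument you spell out. Your version simply makes the bookkeeping explicit.
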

\begin{proof}
This follows directly from the application of the chain rule and \plaineqref{eq:partderdisdis}.
\end{proof}

Using this expression, we can compute the parameter gradient with respect to the discounted reward \(F(\theta) \coloneqq R^\mu_\gamma(\pi_\theta)\) and recover the well known policy gradient theorem, see \cite{sutton1999policy}.

\begin{definition}[state-action value function]
We call \(Q^\pi\coloneqq(I-\gamma P_\pi)^{-1}r\in\mathbb R^{\mathcal S\times\mathcal A}\) the \emph{state-action value function} or the \emph{Q-value function} of the policy \(\pi\).
\end{definition}

\begin{corollary}[Policy gradient theorem]\label{cor:policyGradient}
It holds that 
\begin{align*}
    \partial_{\theta_i} F(\theta) = \sum_{s} \rho^{\pi_\theta, \mu}_\gamma(s) \sum_{a}\partial_{\theta_i} \pi_\theta(a|s) Q^{\pi_\theta}(s, a) = \sum_{s, a}\eta^{\pi_\theta, \mu}_\gamma(s, a) \partial_{\theta_i} \log(\pi_\theta(a|s)) Q^{\pi_\theta}(s, a).
\end{align*}
\end{corollary}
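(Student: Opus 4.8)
The plan is to differentiate the linear representation of the reward and substitute the parameter derivative of the state-action frequencies established in the immediately preceding proposition. First I would write $F(\theta) = R^\mu_\gamma(\pi_\theta) = \langle r, \eta^{\pi_\theta, \mu}_\gamma\rangle_{\mathcal S\times\mathcal A}$, so that by bilinearity of the inner product the chain rule gives $\partial_{\theta_i} F(\theta) = \langle r, \partial_{\theta_i}\eta^{\pi_\theta, \mu}_\gamma\rangle_{\mathcal S\times\mathcal A}$. Into this I would substitute the identity $\partial_{\theta_i}\eta^{\pi_\theta, \mu}_\gamma = (I-\gamma P_{\pi_\theta}^T)^{-1}(\rho^{\pi_\theta, \mu}_\gamma\ast\partial_{\theta_i}\pi_\theta)$ from the preceding proposition.

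The key algebraic step is to move the resolvent off of the derivative term and onto $r$. Using that the adjoint of $(I-\gamma P_{\pi_\theta}^T)^{-1}$ with respect to $\langle\cdot, \cdot\rangle_{\mathcal S\times\mathcal A}$ is $(I-\gamma P_{\pi_\theta})^{-1}$, I obtain $\partial_{\theta_i} F(\theta) = \langle (I-\gamma P_{\pi_\theta})^{-1}r, \rho^{\pi_\theta, \mu}_\gamma\ast\partial_{\theta_i}\pi_\theta\rangle_{\mathcal S\times\mathcal A} = \langle Q^{\pi_\theta}, \rho^{\pi_\theta, \mu}_\gamma\ast\partial_{\theta_i}\pi_\theta\rangle_{\mathcal S\times\mathcal A}$, where the last equality is precisely the definition of the Q-value function $Q^{\pi_\theta} = (I-\gamma P_{\pi_\theta})^{-1}r$. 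Expanding the convolution via $(\rho^{\pi_\theta, \mu}_\gamma\ast\partial_{\theta_i}\pi_\theta)(s, a) = \rho^{\pi_\theta, \mu}_\gamma(s)\partial_{\theta_i}\pi_\theta(a|s)$ then yields the first claimed expression $\partial_{\theta_i}F(\theta) = \sum_s \rho^{\pi_\theta, \mu}_\gamma(s)\sum_a \partial_{\theta_i}\pi_\theta(a|s) Q^{\pi_\theta}(s, a)$.

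For the second equality I would apply the log-derivative identity $\partial_{\theta_i}\pi_\theta(a|s) = \pi_\theta(a|s)\,\partial_{\theta_i}\log\pi_\theta(a|s)$, valid wherever $\pi_\theta(a|s)>0$, together with the factorization $\eta^{\pi_\theta, \mu}_\gamma(s, a) = \rho^{\pi_\theta, \mu}_\gamma(s)\pi_\theta(a|s)$ recorded in the preliminaries. Combining these rewrites $\rho^{\pi_\theta, \mu}_\gamma(s)\partial_{\theta_i}\pi_\theta(a|s) = \eta^{\pi_\theta, \mu}_\gamma(s, a)\,\partial_{\theta_i}\log\pi_\theta(a|s)$, which delivers the second expression directly.

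Every step here is a direct computation, so there is no genuine obstacle to overcome; the only points requiring care are the transpose/adjoint bookkeeping in moving the resolvent onto $r$, and the tacit restriction of the log-derivative substitution to coordinates where $\pi_\theta(a|s)>0$ (on coordinates where $\pi_\theta(a|s)=0$ the corresponding term $\eta^{\pi_\theta, \mu}_\gamma(s, a)\partial_{\theta_i}\log\pi_\theta(a|s)$ is to be read as $\rho^{\pi_\theta, \mu}_\gamma(s)\partial_{\theta_i}\pi_\theta(a|s)$, so the two displayed sums agree termwise).
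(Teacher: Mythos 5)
Your proof is correct and takes essentially the same route as the paper: differentiate $F(\theta)=\langle r,\eta^{\pi_\theta,\mu}_\gamma\rangle_{\mathcal S\times\mathcal A}$, substitute $\partial_{\theta_i}\eta^{\pi_\theta,\mu}_\gamma=(I-\gamma P_{\pi_\theta}^T)^{-1}(\rho^{\pi_\theta,\mu}_\gamma\ast\partial_{\theta_i}\pi_\theta)$ from the preceding proposition, pass the resolvent onto $r$ to obtain $Q^{\pi_\theta}=(I-\gamma P_{\pi_\theta})^{-1}r$, and conclude with the log-derivative identity. Your explicit adjoint bookkeeping and the caveat about coordinates where $\pi_\theta(a|s)=0$ simply spell out details the paper's terser computation leaves implicit.
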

\begin{proof}
Using the preceding proposition, we compute
\begin{align*}
    \partial_{\theta_i} F(\theta) & = \langle \rho^{\pi_\theta, \mu}_\gamma\ast\partial_{\theta_i}\pi_\theta, Q^{\pi_\theta} \rangle_{\mathcal S\times\mathcal A} = \sum_{s} \rho^{\pi_\theta, \mu}_\gamma(s) \sum_{a}\partial_{\theta_i} \pi_\theta(a|s) Q^{\pi_\theta}(s, a) \\ & = \sum_{s, a}\eta^{\pi_\theta, \mu}_\gamma(s, a) \partial_{\theta_i} \log(\pi_\theta(a|s)) Q^{\pi_\theta}(s, a).
\end{align*}
\end{proof}

\begin{remark}[POMDPs as parametrized policy models]
The case of partial observability can sometimes be regarded as a special case of parametrized policies. In fact the observation mechanism \(\beta\) induces a linear map \(\pi\mapsto\pi\circ\beta\). This interpretation together with the preceding proposition can be used to calculate policy gradients in partially observable systems.
\end{remark}

\subsubsection{The face lattice in the fully observable case}\label{app:subsec:facelattice}

So far, we have seen that the set of state-action frequencies form a polytope in the fully observable case. However, not all polytopes are equally complex and thus we aim to describe the \emph{face lattice} of \(\mathcal N_\gamma^\mu\), which describes the combinatorial properties of a polytope, see \cite{ziegler2012lectures}. 
\begin{restatable}[Combinatorial equivalence of \(\mathcal N_\gamma^\mu\) and \(\Delta^\mathcal S_\mathcal A\)]{theo}{thmcomb_equiv}
\label{thm:comb_equiv}
Let \((\mathcal A, \mathcal S, \alpha, r)\) be an MDP and \(\gamma\in(0, 1]\). Then \(\pi\mapsto\eta_\gamma^{\pi,\mu}\) induces an order preserving surjective morphism between the face lattices of \(\Delta_\mathcal A^\mathcal S\) and \(\mathcal N_\gamma^\mu\), such that for every \(I\subseteq\mathcal S\times\mathcal A\) it holds that 
\begin{equation*}\label{eq:faces}
    \left\{ \pi\in\Delta_\mathcal A^\mathcal S \mid \pi(a|s) = 0 \text{ for all } (s, a)\in I \right\} \mapsto  \left\{ \eta\in\mathcal N^\mu_\gamma \mid \eta(s, a) = 0 \text{ for all } (s, a)\in I \right\}.
\end{equation*}
If additionally Assumption \ref{ass:positivity} holds, this is an isomorphism and preserves the dimension of the faces. 
\end{restatable}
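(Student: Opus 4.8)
The plan is to index the faces of both polytopes by their vanishing coordinate sets and to track how $\Psi_\gamma^\mu\colon\pi\mapsto\eta_\gamma^{\pi,\mu}$ moves supports, using the identity $\eta_\gamma^{\pi,\mu}(s,a)=\rho_\gamma^{\pi,\mu}(s)\,\pi(a|s)$ throughout. First I would record the two face lattices combinatorially. Since $\Delta_\mathcal A^\mathcal S$ is a product of simplices, its faces are exactly $F_I^\Delta\coloneqq\{\pi\mid\pi(a|s)=0\text{ for }(s,a)\in I\}$ for $I\subseteq\mathcal S\times\mathcal A$, with $F_I^\Delta\cap F_J^\Delta=F_{I\cup J}^\Delta$. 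By Proposition~\ref{cor:chardisstadis}, $\mathcal N_\gamma^\mu$ is cut out of the nonnegative orthant by linear equations, so its facets lie among the coordinate hyperplanes $\{\eta(s,a)=0\}$ and every face is $F_I^{\mathcal N}\coloneqq\{\eta\in\mathcal N_\gamma^\mu\mid\eta(s,a)=0\text{ for }(s,a)\in I\}$, again with $F_I^{\mathcal N}\cap F_J^{\mathcal N}=F_{I\cup J}^{\mathcal N}$.

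The core step is the set-level identity $\Psi_\gamma^\mu(F_I^\Delta)=F_I^{\mathcal N}$. Inclusion $\subseteq$ is immediate from $\eta(s,a)=\rho(s)\pi(a|s)$. For $\supseteq$, given $\eta\in F_I^{\mathcal N}$, Proposition~\ref{prop:invpara} yields a policy with $\eta_\gamma^{\pi,\mu}=\eta$ through conditioning; on states with $\rho(s)>0$ one has $\pi(a|s)=\eta(s,a)/\rho(s)=0$ for $(s,a)\in I$, while on the remaining transient states the rows of $\pi$ are arbitrary and may be chosen to vanish on $I$, so $\pi\in F_I^\Delta$. As every face of either polytope is indexed in this way, $\varphi\coloneqq\Psi_\gamma^\mu$ induces a well-defined surjection of face lattices; monotonicity $F\subseteq G\Rightarrow\varphi(F)\subseteq\varphi(G)$ is clear, and meets are respected because $\varphi(F_I^\Delta\cap F_J^\Delta)=F_{I\cup J}^{\mathcal N}=F_I^{\mathcal N}\cap F_J^{\mathcal N}=\varphi(F_I^\Delta)\cap\varphi(F_J^\Delta)$.

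For the isomorphism and dimension claims I would invoke Assumption~\ref{ass:positivity}. Under it $\rho_\gamma^{\pi,\mu}(s)>0$ for all $s$ and all $\pi$, so $\eta(s,a)=0$ holds exactly when $\pi(a|s)=0$; hence the maximal vanishing set of $F_I^{\mathcal N}$ coincides with that of $F_I^\Delta$, distinct faces have distinct images, and $\varphi$ is injective with inverse the conditioning map of Proposition~\ref{prop:invpara}, which is order-preserving for the same support reason. This gives a lattice isomorphism. For dimensions, Proposition~\ref{prop:jacpsi} shows $D\Psi_\gamma^\mu$ has full rank wherever $\rho>0$, so $\Psi_\gamma^\mu$ restricts to an injective map of the relative interior of $F_I^\Delta$ onto that of $F_I^{\mathcal N}$ with smooth rational inverse, forcing $\dim F_I^\Delta=\dim F_I^{\mathcal N}$.

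The hard part is the join. Meets reduce to unions of index sets and use nothing beyond the coordinatewise identity, but a join is the smallest face containing two given faces, and $\Psi_\gamma^\mu$ can enlarge supports: mixing two policies may render new states recurrent, so the support of $\rho$, and hence of $\eta$, under a mixture can strictly exceed the union of the individual supports, yielding only $\varphi(F\vee G)\supseteq\varphi(F)\vee\varphi(G)$ in general. This is exactly the effect that Assumption~\ref{ass:positivity} eliminates: once $\rho>0$ everywhere, the support of $\eta$ matches that of $\pi$, supports combine additively, and joins are preserved, completing the isomorphism. Making this support-matching precise, and correctly handling the transient states in the surjectivity step, are the two places that demand care.
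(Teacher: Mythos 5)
Your proposal is correct, and its core is the same as the paper's argument: identify the faces of $\Delta_\mathcal A^\mathcal S$ and of $\mathcal N_\gamma^\mu$ with vanishing-coordinate sets (via Proposition~\ref{cor:chardisstadis}), prove the set-level identity $\Psi_\gamma^\mu(F_I^\Delta)=F_I^{\mathcal N}$ using $\eta(s,a)=\rho(s)\pi(a|s)$ for one inclusion and the conditioning construction of Proposition~\ref{prop:invpara} (with free choices on null states) for the other, and then invoke Assumption~\ref{ass:positivity} to match supports for injectivity and Proposition~\ref{prop:jacpsi} for dimension preservation. The paper's proof consists of exactly these steps.

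Where you genuinely add something is in checking the lattice operations, which the paper's proof never does: it establishes the face-to-face surjection and injectivity under positivity, but says nothing about meets or joins. Your analysis is right on both counts. Meets come for free, since the set identity reduces them to unions of index sets; and join preservation really can fail without Assumption~\ref{ass:positivity}, exactly by your reachability mechanism. Concretely: take a chain $s_1\to s_2\to s_3$ where leaving $s_i$ requires action $a_1$, $\mu=\delta_{s_1}$, $\gamma<1$, and let $F=\{\pi\mid\pi(a_1|s_1)=0\}$, $G=\{\pi\mid\pi(a_1|s_2)=0\}$. Every $\eta\in\varphi(F)\cup\varphi(G)$ has $\eta(s_3,a)=0$ for all $a$, so $\varphi(F)\vee\varphi(G)$ is contained in the face $\{\eta\in\mathcal N_\gamma^\mu\mid\eta(s_3,\cdot)=0\}$, whereas $F\vee G=\Delta_\mathcal A^\mathcal S$ and $\varphi(F\vee G)=\mathcal N_\gamma^\mu$ contains points with $\eta(s_3,a_1)>0$. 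So without positivity the induced map is an order- and meet-preserving surjection but not a lattice morphism in the strict sense of the paper's Appendix~\ref{sec:app:1.2}; your proof makes precise a point on which both the statement and the paper's proof are loose. One caveat you share with the paper: the prescription ``choose the transient rows of $\pi$ to vanish on $I$'' requires that $I$ contain no full row $\{s\}\times\mathcal A$, which holds precisely when $F_I^\Delta\neq\emptyset$; without positivity a full-row index set can have $F_I^\Delta=\emptyset$ while $F_I^{\mathcal N}\neq\emptyset$, so the stated correspondence should be read on nonempty faces, indexed by their maximal vanishing sets.
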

\begin{proof}
First, we note that the faces of both \(\Delta_\mathcal A^\mathcal S\) and \(\mathcal N_\gamma^\mu\) have the structure of the left and right hand side of \plaineqref{eq:faces} respectively, which follows from \plaineqref{eq:explicitdescriptionN}. Denote now the left and right hand side in \plaineqref{eq:faces} by \(F\) and \(G\) respectively, then we need to show that \(\Psi_\gamma^\mu(F)=G\). For \(\pi\in F\) it holds that 
    \[ \eta^{\pi, \mu}_\gamma(s, a) = \rho^{\pi, \mu}_\gamma(s)\pi(a|s) = 0 \quad \text{for all } (s, a)\in I \]
and hence \(\eta^{\pi, \mu}_\gamma\in G\). On the other hand for \(\eta\in G\) we can set \(\pi(\cdot|s) \coloneqq \eta(\cdot|s)\) whenever defined and any other element such that \(\pi(a|s) = 0\) for all \((s, a)\in I\) otherwise. Then we surely have \(\pi\in F\) and by Proposition \ref{prop:invpara} also \(\eta^{\pi, \mu}_\gamma = \eta\). In the case that \(\rho^{\pi, \mu}_\gamma>0\) holds entrywise for all policies \(\pi\in\operatorname{int}(\Delta_\mathcal A^\mathcal S)\), the mapping \(\eta\mapsto \eta(\cdot|\cdot)\) defines an inverse to \(\Psi_\gamma^\mu\), which shows that the mapping defined in \plaineqref{eq:faces} is injective. The assertion on the dimension follows from basic dimension counting, from the fact that the rank is preserved by a lattice isomorphism or by virtue of Proposition \ref{prop:jacpsi}.
\end{proof}

\subsection{The partially observable case}\label{app:subsec:geometry:partially}

In Corollary \ref{cor:semialg}, we have seen that the discounted state-action frequencies form a semialgebraic set. 
Now we aim to describe its defining polynomial inequalities. 
In Section~\ref{sec:optimization} we will discuss how the degree of these polynomials allows us to upper bound the number of critical points of the optimization problem. 

\defeffpolpol*

Note that \(\Delta_{\mathcal A}^{\mathcal S, \beta}\) is indeed a polytope since it is the image of the polytope \(\Delta_\mathcal A^\mathcal O\) under the linear mapping \(\pi\mapsto \pi\circ\beta\). Hence, we can write it as an intersection
\begin{equation}\label{app:eq:effpolpol}
    \Delta_{\mathcal A}^{\mathcal S, \beta} = \Delta_\mathcal A^\mathcal S \cap \mathcal U \cap\mathcal C,
\end{equation}
where $\mathcal U, \mathcal C\subseteq\mathbb R^{\mathcal S\times\mathcal A}$ are an affine subspace and a polyhedral cone and describe a finite set of linear equalities and a finite set of linear inequalities respectively. In the following we will compute those sets explicitely under mild conditions and see that they do not carry an affine part.

\subsubsection{Defining linear inequalities of the effective policy polytope}\label{app:sec:ineqs_effective}
Obtaining inequality descriptions of the images of polytopes under linear maps is a fundamental problem that is non-trivial in general. 
It can be approached algorithmically, e.g., by Fourier-Motzkin elimination, block elimination, vertex approaches, and equality set projection \citep{Jones:169768}. 
We discuss the special case where the linear map is injective, corresponding to the case where the associated matrix $B$ has linearly independent columns. As a polytope is a finite intersection of closed half spaces $H^+ = \{x\mid n^Tx\ge\alpha \}$, it suffices to characterize the image $BH^+$. It holds that 
\begin{equation}\label{eq:hyperhyper}
    BH^+ = \{y\in\operatorname{range} B \mid n^TB^+y\ge \alpha \} = \{y \mid ((B^+)^Tn)^Ty\ge\alpha \}\cap\operatorname{ker}(B^T)^\perp,
\end{equation}
where $B^+$ is a pseudoinverse and where we have used that $B^+y$ consists of at most one element by the injectivity of $B$. Let us now come back to the mapping $\pi\mapsto\pi\circ\beta=\beta\pi$. By the ``vec-trick'', this map corresponds to $\operatorname{vec}(\beta \pi I) = (I^T\otimes \beta)\operatorname{vec}(\pi)$. 
Hence the linear map is represented by the matrix $B=I\otimes \beta$. 
We observe that $(I\otimes \beta)^+ = I\otimes \beta^+$ \citep[see][Section 2.6.3]{LANGVILLE2004429}. 
Notice that $B=I \otimes \beta$ has linearly independent columns if and only if $\beta$ does. 
By the above discussion, if $\beta$ has linearly independent columns, then an inequality $\langle \pi, n\rangle \geq 0$ in the policy polytope $\Delta_\mathcal A^\mathcal O$ corresponds to an inequality $\langle \tau, (\beta^+)^T n\rangle\geq0$ in the polytope $\Delta_\mathcal A^\mathcal S$. 

\assinvobsmec*
\remobservability*
\theohdeseffpolpol*
\begin{proof}
First, we recall the defining linear (in)equalities of the policy polytope $\Delta_\mathcal A^\mathcal O$, which are given by
\begin{align*}
    \pi(a|o) = \langle \delta_o\otimes\delta_a, \pi \rangle_{\mathcal O\times\mathcal A} & \ge 0 \quad \text{for all }a\in\mathcal A, o\in\mathcal O \quad\text{and }  \\
    \sum_{a} \pi(a|o) = \langle \delta_o\otimes\mathds{1}_\mathcal A, \pi \rangle_{\mathcal O\times\mathcal A} & = 1 \quad \text{for all } o\in\mathcal O.
\end{align*}
Hence, by the general discussion from above, namely by \plaineqref{eq:hyperhyper}, it holds that
    \[ \Delta_{\mathcal A}^{\mathcal S, \beta} = \ker(\beta^T)^\perp\cap \{\tau\mid \beta^+\tau\ge0\} \cap \Big\{\tau\mid \sum_a(\beta^+\tau)_{oa} = 1 \text{ for all }o\in\mathcal O\Big\}. \]
Note that the linear inequalities $\sum_a(\beta^+\tau)_{oa} = 1$ are redundant in $\Delta_\mathcal A^\mathcal S$. To see this, we note that $\beta^+\mathds{1}_\mathcal S = \mathds{1}_\mathcal O$ by the injectivity of $\beta$ and $\beta\mathds{1}_\mathcal O = \mathds{1}_\mathcal S$. Now we can check that 
    \[ \sum_a(\beta^+\tau)_{oa} = \sum_a\sum_s\beta^{+}_{os}\tau_{sa} = \sum_s\beta^{+}_{os}\sum_a\tau_{sa} = \sum_s\beta^{+}_{os} = 1. \]
This together with $\beta(\Delta_\mathcal A^\mathcal O)\subseteq\Delta_\mathcal A^\mathcal S$ shows that 
    \[ \Delta_{\mathcal A}^{\mathcal S, \beta} = \Delta_\mathcal A^\mathcal S\cap \ker(\beta^T)^\perp\cap \{\tau\mid \beta^+\tau\ge0\}. \]
The reformulation of the sets $\mathcal C$ and $\mathcal D$ for deterministic observation mechanisms $\beta$ follows from the preceding remark.
\end{proof}

\subsubsection{Defining polynomial inequalities of the feasible state-action frequencies}
Using that the inverse of $\pi\mapsto\eta^{\pi}$ is given through conditioning (see Proposition \ref{prop:invpara}), we can translate linear inequalities in $\Delta_\mathcal A^\mathcal S$ into polynomial inequalities in $\mathcal N_\gamma^\mu$. More precisely, we have the following result, which can easily be extended to more general inequalities.
\propcorlinine*
\begin{proof}
Let $\tau\in\Delta_\mathcal A^\mathcal S$ and let $\eta$ denote its corresponding discounted stationary distribution and $\rho$ the state marginal. Assuming that the left hand side holds, we compute
\begin{align*}
    \sum_{s\in S} \sum_a b_{sa}\eta_{sa}\prod_{s'\in S\setminus\{s\}}\sum_{a'} \eta_{s'a'} & = \sum_{s\in S} \sum_a b_{sa}\tau_{sa}\rho_s\prod_{s'\in S\setminus\{s\}}\rho_{s'} \\ & = \left(\sum_{s, a} b_{sa} \tau_{sa}\right) \cdot \prod_{s'\in S}\rho_{s'}\ge c\prod_{s'\in S}\rho_{s'},
\end{align*}
which shows the first implication. If further Assumption \ref{ass:positivity} holds, the product over the marginals is strictly positive, which shows the other implication.
\end{proof}
\begin{remark}\label{app:rem:boxmodels}
According to the preceding proposition, a linear inequality in the state policy polytope $\Delta_\mathcal A^\mathcal S$ involving actions of $k$ different states yields a polynomial inequality of degree $k$ in the set of state-action frequencies $\mathcal N_\gamma^\mu$. In particular, for a linearly constrained policy model $\Pi\subseteq\Delta_\mathcal A^\mathcal S$, where every constraint only addresses a single state, the set of state-action frequencies induced by these policies will still form a polytope. This shows that this type of box constraints are well aligned with the algebraic geometric structure of the problem. The linear constraints arising from partial observability never exhibit this box type structure -- unless the system is equivalent to its fully observable version. This is because the projection of the effective policy polytope $\Delta_{\mathcal A}^{\mathcal S, \beta}$ onto a single state always gives the entire probability simplex $\Delta_\mathcal A$, which is never the case, if there is a non trivial linear constraint concerning only this state. 
\end{remark}

\theogeometrydiscstadisc*
\begin{proof}
The equation $\mathcal N_\gamma^{\mu, \beta} = \mathcal N_\gamma^{\mu} \cap\mathcal V\cap \mathcal B$ is a direct consequence of \plaineqref{eq:hdeseffpolpol} and Proposition \ref{prop:corlinine}. Further, it is clear from Proposition \ref{prop:corlinine} that the mapping $\Psi\colon\Delta_\mathcal A^\mathcal S\to\mathcal N_\gamma^\mu, \pi\mapsto\eta^{\pi, \mu}_\gamma$ induces a bijection of the face lattices of $\Delta_{\mathcal A}^{\mathcal S, \beta}$ and $\mathcal N_\gamma^{\mu, \beta}$. In order to see that the join and meet are respected, we note that for $F, G\in\mathcal F(\Delta_{\mathcal A}^{\mathcal S, \beta})$ it holds that $\Psi(F\land G) = \Psi(F\cap G) = \Psi(F)\cap\Psi(G)=\Psi(F)\land\Psi(G)$. Further, $\Psi(F\lor G)$ is a face of $\mathcal N_\gamma^{\mu, \beta}$ containing $\Psi(F)$ and $\Psi(G)$ and hence by definition $\Psi(F)\lor\Psi(G)\subseteq\Psi(F\lor G)$. Further, for any face $I$ of $\mathcal N_\gamma^{\mu, \beta}$ containing $\Psi(F)$ and $\Psi(G)$ it holds that $\Psi^{-1}(I)$ is a face of $\Delta_{\mathcal A}^{\mathcal S, \beta}$ containing $F$ and $G$ and hence $\Psi^{-1}(I)\supseteq F\lor G$ or equivalently $\Psi(F\lor G)\subseteq I$.
\end{proof} 

{
Comparing \plaineqref{app:eq:effpolpol} and Theorem~\ref{theo:geometrydiscstadisc} we see that the linear space $\mathcal U$ corresponds to the variety $\mathcal V$, where the cone $\mathcal C$ corresponds to the basic semialgebraic set $\mathcal B$. In general, every linear (in)equality cutting out the effective policy polytope $\Delta_\mathcal A^{\mathcal S, \beta}$ from the state policy polytope $\Delta_\mathcal A^\mathcal S$ of the associated MDP corresponds to a polyomial (in)equality cutting out the feasible state-action frequencies $\mathcal N_\gamma^{\mu,\beta}$ from all state-action frequencies $\mathcal N_\gamma^\mu$ of the corresponding MDP, see also Table~\ref{app:table:correspondence}. This correspondence arises by relating state-action frequencies to state policies via conditioning. Hence, the problem of computing the defining polynomial inequalities of the feasible state-action frequencies reduces to computing the defining linear inequalities of the effective policy polytope. This can be done in closed form if $\beta$ has linearly independent columns or if it deterinistic, see Remark~\ref{rem:defpoline}, \ref{app:rem:poleq} and \ref{rem:polynomialConstraintsDeterministicObservations}.
}
\renewcommand*{\arraystretch}{2}
\begin{table}[]
\centering
\begin{tabular}{c|c|c|}
\cline{2-3} & (In)equalities of state policies
& (In)equalities of state-action frequencies\\ \hline
\multicolumn{1}{|c|}{} & 
 $\Delta_\mathcal A^\mathcal S$ is described by& $\mathcal N^\mu_\gamma$ is described by 
 \\
 \cline{2-3}
\multicolumn{1}{|c|}{} & $\tau(a|s)\ge0$ & $\eta(s,a)\ge0$ \\ \multicolumn{1}{|c|} { \makecell{MDPs } } & \makecell{ Row normalization: \\ $
\sum_a \tau(a|s) - 1=0$} & -- \\ 
\multicolumn{1}{|c|}{} & -- & \makecell{ Discounted stationarity: \\ $\langle w^s_\gamma, \eta \rangle -(1-\gamma)\mu(s) = 0$} \\
\multicolumn{1}{|c|}{} & -- & \makecell{ For $\gamma=1$: \\ $\sum_{s,a}\eta_{sa}-1=0$} \\[.2cm]

\hline 
\multicolumn{1}{|c|}{} & 
 $\Delta_\mathcal A^{\mathcal S,\beta}$ is described in $\Delta_\mathcal A^\mathcal S$ by & $\mathcal N^{\mu,\beta}_\gamma$ is described in $\mathcal N^\mu_\gamma$ by 
 \\
 \cline{2-3}
\multicolumn{1}{|c|}{\makecell{POMDPs }} & \makecell{ \\[-.1cm] Linear (in)equalities \\ See Section~\ref{sec:ineqs_effective}\\[.1cm]
Closed form under Assumption~\ref{ass:invobsmec}:\\ 
See Theorem~\ref{theo:hdeseffpolpol}\\\phantom{Remark} \\[.1cm] Closed form for deterministic observ.:\\
See Remark~\ref{rem:polynomialConstraintsDeterministicObservations}
\\[.2cm] } & \makecell{\\[-.1cm] Polynomial (in)equalities \\ See Section~\ref{sec:disc-stat-distri}, Proposition~\ref{prop:corlinine}\\[.1cm] 
Closed form under Assumption~\ref{ass:invobsmec}:\\
See Remark~\ref{rem:defpoline} for inequalities \\
See Remark~\ref{app:rem:poleq} for equalities \\[.1cm] 
Closed form for deterministic observ.:\\
See Remark~\ref{rem:polynomialConstraintsDeterministicObservations}
\\[.2cm] }\\ \hline
\end{tabular}
\caption{Correspondence of the defining linear and polynomial inequalities of the (effective) state policies and the (feasible) state-action frequencies for MDPs and POMDPs respectively.}
\label{app:table:correspondence}
\end{table}  
\renewcommand*{\arraystretch}{1}

\remdefpoline*
{\begin{remark}[Defining polynomial equalities]\label{app:rem:poleq}
Analogously to the defining inequalities, we can compute the defining polynomial equalities in the following way. First, we need to compute a basis $\{b^j\}_{j\in J}$ of $\{\beta\pi\mid\pi\in\mathbb R^{\mathcal O\times\mathcal A}\}^{\perp} = \operatorname{ker}(\beta^T)\subseteq\mathbb R^{\mathcal S\times\mathcal A}$, which can easily be done using the Gram-Schmidt process. Note that the defining linear equalities of the effective policy polytope (in the policy polytope) are given by $\langle b^j, \tau\rangle_{\mathcal S\times\mathcal A} = 0$. Hence, by Proposition~\ref{prop:corlinine} the corresponding polynomial equality is given by
\begin{equation}\label{eq:definingPolynomialEqualities}
    q_j(\eta) \coloneqq \sum_{s\in S_j} \sum_{a\in\mathcal A} b^j_{sa}\eta_{sa} \prod_{s'\in S_j\setminus\{s\}} \sum_{a'\in\mathcal A} \eta_{s'a'} = 0, 
\end{equation}
where $S_j\coloneqq\{s\in\mathcal S\mid b^j_{sa} \ne 0 \text{ for some } a\in\mathcal A\}$.
\end{remark}
}

{\begin{remark}[Polynomial constraints for deterministic observations]\label{rem:polynomialConstraintsDeterministicObservations}
In the case, where $\beta$ corresponds to a determinstic mapping we can compute all polynomial constraints in closed form. Let us assume that $\beta(o|s) = \delta_{og(s)}$ for some mapping $g\colon\mathcal S\to\mathcal O$ and write $S_o\coloneqq g^{-1}(\{o\})\subseteq\mathcal S$, then $\tau\in\Delta_\mathcal A^\mathcal S$ belongs to the effective policy polytope $\Delta_\mathcal A^{\mathcal S, \beta}$ if and only if 
\begin{align}\label{eq:effectivePolPolDetObs}
    \tau(a|s_1) = \tau(a|s_2) \quad \text{for all } s_1, s_2\in S_o, a\in\mathcal A, o\in\mathcal O.
\end{align}
Note that this can be encoded in $\sum_o \lvert\mathcal A\rvert(\lvert S_o\rvert -1 ) = \lvert\mathcal A\rvert(\lvert\mathcal S\rvert - \lvert \mathcal O \rvert)$ linear equations; indeed if we fix $s_o\in S_o$, then \plaineqref{eq:effectivePolPolDetObs} is equivalent to
\begin{align}\label{eq:effectivePolPolDetObs2}
    \tau(a|s) - \tau(a|s_o) = 0 \quad \text{for all } s\in S_o\setminus\{s_o\}, a\in\mathcal A, o\in\mathcal O.
\end{align}
Another way to derive these linear equalities is by noticing that $e_{sa} - e_{s_oa}$ form a basis of $\operatorname{ker}(\beta^T)$, compare also Remark~\ref{app:rem:poleq}.
By Proposition~\ref{prop:corlinine} for $\eta\in\mathcal N_\gamma^\mu$ it is equivalent to lie in $\mathcal N_\gamma^{\mu, \beta}$ or to satisfy \begin{align}\label{eq:effectivePolPolDetObs3}
    \eta_{sa}\sum_{a'}\eta_{s_oa'} - \eta_{s_oa}\sum_{a'} \eta_{sa'} = 0 \quad \text{for all } s\in S_o\setminus\{s_o\}, a\in\mathcal A, o\in\mathcal O.
\end{align}
Note that in this case, there are no polynomial inequalities; this can also be seen from Remark~\ref{rem:observability} and Remark~\ref{rem:defpoline}. Indeed, it holds that $\beta^+ = \beta^T\operatorname{diag}(n_1, \dots, n_{\lvert\mathcal O\rvert})\ge0$, where $n_o\coloneqq \lvert S_o\rvert$. Hence, the polynomial inequalities $p_{ao}(\eta) \ge0$ are redundant on the cone $[0, \infty)^{\mathcal S\times\mathcal A}$.
\end{remark}}

\begin{remark}
In the fully observable case we have $|S_o|=1$ for each $o$.  Hence, each of the polynomial inequalities has a single term of degree $1$. Indeed, in this case the inequalities are simply $\eta_{sa}\geq0$, for each $a$, for each $s$. In the case of a deterministic $\beta$, we have $\beta^+_{os}=\mathds{1}_{s\in S_o}/|S_o|$. For each $o,a$, there is an inequality $\sum_{f\colon S_o\to\mathcal{A}}|f^{-1}(a)| \prod_{s\in S_o}\eta_{sf(s)}\geq0$ of degree $|S_o|$ equal to the number of states that are compatible with $o$.
\end{remark}

\begin{remark}[Reformulation of reward maximization as a polynomial program]\label{rem:polynomialProgram2}
By the theorem above and Proposition~\ref{prop:corlinine}, reward maximization is equivalent to the maximization of a linear function subject to polynomial constraints. This enables the use of any (approximate) solution technique of polynomial optimization problems in order to solve POMDPs. Such methods have been developed for a long time and have been applied to a variety of problems \citep{anjos2011handbook, lasserre2015introduction}. As meta algorithm, this is presented in Algorithm~\ref{alg:cap}. Once, a solution $\eta^\ast$ is obtained, the corresponding state policy $\tau^\ast\in\Delta_\mathcal A^\mathcal S$ can be computed by conditioning, i.e. $\tau(a|s)\coloneqq \eta_{sa}/(\sum_{a'}\eta_{sa'})$. Then, every $\pi^\ast\in\Delta_\mathcal A^\mathcal O$ with $\beta\pi^\ast = \tau^\ast$ is an optimal policy. Such a policy can be computed by solving a system of linear equations, which are $\beta\pi=\tau$ and $\pi\in\Delta_\mathcal A^\mathcal O$, which is standard. In particular, if $\beta$ has linearly independent columns, it holds that $\pi^\ast\coloneqq \beta^+\tau$. We demonstrate that this offers a computationally feasible approach to planning of POMPDs in Section~\ref{app:sec:plots} on the toy example used for Figure~\ref{fig:range4} and a grid world. 
\end{remark}

\begin{algorithm}
\caption{{Polynomial programming for POMDPs}}\label{alg:cap}
\begin{algorithmic}
\Require $\alpha, \beta, \gamma, \mu$
\For{$s\in\mathcal S$}
    \State $w^s\gets \delta_s\otimes\mathds{1}_\mathcal A - \gamma \alpha(s|\cdot, \cdot)$
\EndFor
\For{$a\in\mathcal A, o\in \mathcal O$}
    \State Define $p_{ao}$ according to Equation~\plaineqref{eq:definingPolynomialInequalities}
\EndFor
\State Compute a basis $\{b^j\}_{j\in J}$ of $\{\beta\pi\mid \pi\in\mathbb R^{\mathcal O\times\mathcal A}\}^\perp\subseteq\mathbb R^{\mathcal S\times\mathcal A}$
\For{$j\in J$}
    \State Define $q_j$ according to Equation~\plaineqref{eq:definingPolynomialEqualities} 
\EndFor
\State $\eta^\ast\gets\argmax\langle r, \eta \rangle$ sbj. to $\eta\ge0$, $\langle w^s, \eta \rangle = (1-\gamma) \mu_s$, $\langle \mathds{1}_{\mathcal S\times\mathcal A}, \eta \rangle = 1$,  $p_{ao}(\eta)\ge0$, $q_j(\eta) = 0$ 
\State $R^\ast\gets \langle r, \eta^\ast \rangle$ 
\State $\tau^\ast\gets \eta^\ast(\cdot|\cdot)\in\Delta_\mathcal A^\mathcal S$ 
\State $\pi^\ast\gets$ solution of $\beta\pi = \tau^{\ast}$ 

\Return maximizer $\eta^\ast$, optimal value $R^\ast$, optimal policy $\pi^\ast$
\end{algorithmic}
\end{algorithm}

\section{Details on the Optimization}\label{app:sec:optimization}

Let us quickly recall how we can reformulate the reward maximization problem as a polynomial optimization problem, which then leads us to the mighty tools of algebraic degrees. We perceive the reward maximization problem again as the maximization of a linear function $p_0$ over the set of feasible state-action frequencies $\mathcal N_\gamma^{\mu, \beta}$. Since under Assumption \ref{ass:positivity} the parametrization $\pi\mapsto\eta^\pi$ is injective and has a full-rank Jacobian everywhere (see Appendix~\ref{app:subsubsec:jacobian}), the critical points in the policy polytope $\Delta_\mathcal A^\mathcal 
O$ correspond to the critical points of $p_0$ on $\mathcal N_\gamma^{\mu, \beta}$ \citep{trager2019pure}. In general, critical points of this linear function can occur on every face of the semialgebraic set $\mathcal N_\gamma^{\mu, \beta}$. The optimization problem thus has a combinatorial and a geometric component, corresponding to the number of faces of each dimension and the number of critical points in the interior of any given face. 
We have discussed the combinatorial part in Theorem~\ref{theo:geometrydiscstadisc} and focus now on the geometric part.  
Writing $\mathcal N_\gamma^{\mu, \beta} {=\{\eta\in\mathbb{R}^{\mathcal{S}\times\mathcal{A}}\colon p_i(\eta)\leq 0, i\in I\}}$, we are interested in the number of critical points on the interior of a face 
    \[\operatorname{int}(F_J)=\{\eta\in\mathcal N_\gamma^{\mu,\beta}\mid p_j(\eta) = 0 \text{ for }j\in J, p_i(\eta)>0\text{ for }i\in I\setminus J \}.\]
Note that a point $\eta\in\operatorname{int}(F_J)$ is critical, if and only if it is a critical point on the variety 
    \[\mathcal V_J\coloneqq\{\eta\in\mathbb R^{\mathcal S\times\mathcal A}\mid p_j(\eta) = 0 \text{ for }j\in J\}.\]
For the sake of notation, let us assume that $J=\{1, \dots, m\}$ from now on. We can upper bound the number of critical points in the interior of  the face by the number of critical points of the polynomial optimization problem \begin{equation}\label{app:eq:polopt}
    \text{maximize } p_0(\eta) \quad \text{subject to} \quad p_j(\eta)=0 \text{ for } j=1, \dots, m,
\end{equation}
where the polynomials have $n$ variables. The number of critical points of this problems is upper bounded by the algebraic degree of the problem as we discuss now. 

\subsection{Introduction to algebraic degrees}\label{app:subsecalgebraicdegree}

We try to present the results from the mighty theory of algebraic degrees that we use here and refer the interested reader to the excellent low level introduction by~\cite{breiding2021nonlinear} and to the references therein. Let us consider the polynomial optimization problem \plaineqref{app:eq:polopt}, where we do not require $p_0$ to be linear. Further, denote the number of variables by $n$ (in the case of state-action frequencies $n=\lvert\mathcal S\rvert\lvert\mathcal A\rvert$) and denote the degrees of $p_0, \dots, p_m$ by $d_0, \dots, d_m$. We call a point \emph{critical}, if it satisfies the KKT conditions ($\nabla p_0(x) +\sum_{i=1}^m\lambda_i\nabla p_i(x)=0$, $p_1(x)=\cdots=p_m(x)=0$), which can be phrased as a system of polynomial equations \citep[see][]{doi:10.1137/080716670}. The number of complex solutions to those criticality equations, when finite, is called the \emph{algebraic degree} of the problem. The algebraic degree is determined by the nature of the polynomials $p_0,\ldots, p_m$ and captures the computational complexity of the optimization problem \citep{kung1973computational, bajaj1988algebraic}.\footnote{The coordinates of critical points can be shown to be roots of some univariate polynomials whose degree equals the algebraic degree and whose coefficients are rational functions of the coefficients of $p_0,\ldots, p_m$.} A special case of \plaineqref{app:eq:polopt} is when $m=n$ and the polynomials $p_1, \dots, p_m$ are generic. Then by Bézout's theorem there are exactly $d_1\cdots d_n$ isolated points satisfying the polynomial constraints and all of them are critical and hence the algebraic degree is precisely $d_1\cdots d_n$ \citep{timme2021numerical}.
If the polynomials $p_0, \dots, p_m$ define a complete intersection, i.e., the co-dimension of their induced variety is $m+1$, the algebraic degree of \plaineqref{app:eq:polopt} is upper bounded by 
\begin{equation}\label{app:eq:algebraicdegreepolynomialoptimization}
    d_1\cdots d_m\sum_{i_0+\cdots+i_m=n-m}(d_0-1)^{i_0}\cdots(d_m-1)^{i_m},
\end{equation}
and this bound is attained for generic polynomials \citep{doi:10.1137/080716670, breiding2021nonlinear}. For non-complete intersections, the expression~\plaineqref{app:eq:algebraicdegreepolynomialoptimization} does not need to yield an upper bound if some constraints are redundant. However, we can modify the expression to obtain a valid upper bound. Indeed, if $l$ and $c=n-l$ denote the dimension and co-dimension of 
    \[ \mathcal V\coloneqq \{x\mid p_1(x) = \cdots=p_m(x)=0\} \] 
and if $p_0$ is generic and if the degrees are ordered, i.e., $d_1\ge\dots\ge d_m$, then the algebraic degree is upper bounded by
\begin{equation}\label{app:eq:algebraicdegreepolynomialoptimization:2}
    d_1\cdots d_c\sum_{i_0+\cdots+i_c=l}(d_0-1)^{i_0}\cdots(d_c-1)^{i_c}.
\end{equation}
To see this, fix a subset $J\subseteq\{1, \dots, m\}$ of cardinality $c$, such that $\mathcal V = \{x\mid p_j(x)=0 \text{ for } j\in J \}$. Then we can apply the bound from \plaineqref{app:eq:algebraicdegreepolynomialoptimization} and evaluate it to be
    \[ \prod_{j\in J} d_j \sum_{i_0+\sum_{j\in J} i_j = n-c} (d_0-1)^{i_0} \cdot \prod_{j\in J}(d_j - 1)^{i_j},  \]
which is clearly upper bounded by~\plaineqref{app:eq:algebraicdegreepolynomialoptimization:2}. If $p_0$ is linear, then $d_0=1$ and the expression simplifies to
    \[ d_1\cdots d_c\sum_{i_1+\cdots+i_c=l}(d_1-1)^{i_0}\cdots(d_c-1)^{i_c}. \]
If further $d_i = 1$ for $i\ge k$ for some $k\le c$, then we obtain
\begin{equation}\label{app:eq:algebraicdegreepolynomialoptimization:3}
    d_1\cdots d_k\sum_{i_1+\cdots+i_k=l}(d_1-1)^{i_1}\cdots(d_k-1)^{i_k}.
\end{equation}
If $p_{k+1}, \dots, p_m$ are affine linear (and 
in general position relative to $p_{1}, \dots, p_k$, the algebraic degree of \plaineqref{app:eq:polopt} is given by the $(m-k)$-th \emph{polar degree} $\delta_{m-k}(\mathcal V)$ of the variety 
    \[\mathcal V\coloneqq\{\eta\mid p_{k+1}(\eta) = \dots = p_m(\eta)=0\},\]
see \cite{Draisma2016,CELIK2021855}. This relation is particularly useful, since for state-action frequencies there are always active linear equations as described in \plaineqref{eq:explicitdescriptionN}. The polar degrees of certain interesting cases (Segre-Veronese varieties) have been recently computed by \citet[][Section~5]{Sodomaco2020} and our proof of Proposition~\ref{prop:blind} builds on those formulas and their presentation by~\cite{CELIK2021855}. 

\begin{remark}[Genericity assumptions]
In the case, where the polynomials $p_0, \dots, p_m$ are not generic, there might be infinitely many critical points. Indeed, even for a linear program, i.e., when all polynomials are linear, there might be infinitely many and even a non-trivial face of global optima. This is however not the case if $p_0$ is generic. Hence, the genericity assumptions on the reward vector $r$ and also other elements of the POMDP are not surprising. For example, they prevent the reward vector to be identical to zero or to be perpendicular on all vectors $\delta_s\otimes\mathds{1}_\mathcal A-\gamma\alpha(s|\cdot, \cdot)$ in which cases the reward function would be constant and every policy would be a global optimum.
\end{remark}

\subsection{General upper bound on the number of critical points}

\corupperboundcriticalpoints*
\begin{proof}
The face $G$ of the effective policy polytope corresponding to $F$ is given by
    \[ \operatorname{int}(G) = \left\{ \tau \in\Delta_{\mathcal A}^{\mathcal S, \beta} \mid (\beta^+\tau)_{oa} = 0 \Leftrightarrow (a, o)\in I  \right\}. \]
In order to describe the corresponding set of discounted state-action frequencies, we use the notation
    \[ p_{ao}(\eta)\coloneqq \sum_{s\in S_o} \left( \beta^{+}_{os} 
    \eta_{sa} \prod_{s'\in S_o\setminus\{s\}} \sum_{a'} \eta_{s' a'} \right), \]
then it holds that
    \[ \mathcal N_\gamma^{\mu, \beta} = \{\eta\in\mathcal N_\gamma^\mu\mid p_{ao}(\eta)\ge0 \text{ for all } a\in \mathcal A, o\in\mathcal O\}. \]
Then, $F$ and $G$ correspond to the face 
\begin{align*}
    \operatorname{int}(H) & = \left\{ \eta\in \mathcal N_\gamma^{\mu, \beta} \mid p_{ao}(\eta) = 0 \Leftrightarrow (a, o)\in I \right\} \\
    & = \left\{ \eta\in \mathcal N_\gamma^{\mu} \mid p_{ao}(\eta) \ge 0 \text{ and equality if and only if }(a, o)\in I \right\} . 
\end{align*}
In order to use the explicit description of $\mathcal N_\gamma^\mu$ given in \plaineqref{eq:explicitdescriptionN}, we remind the reader that $w_\gamma^s\coloneqq\delta_s\otimes\mathds{1}_\mathcal A - \gamma \alpha(s|\cdot, \cdot)$. Then, it holds that
\begin{align*}
    \operatorname{int}(H) & = \big\{ \eta\in [0, \infty)^{\mathcal S\times\mathcal A} \mid  p_{ao}(\eta) \ge 0 \text{ and equality if and only if }(a, o)\in I &\\ & \qquad\qquad\qquad\qquad\quad \langle w_\gamma^s, \eta\rangle_{\mathcal S\times\mathcal A} = (1-\gamma) \mu_s \text{ for } s\in\mathcal S \big\},
\end{align*}
where we used Proposition~\ref{cor:chardisstadis}. Since the discounted state distributions are all positive by assumption, for $\eta\in\operatorname{int}(H)$ it holds $\eta_{sa}=0$ if and only if $\tau(a|s)\coloneqq\eta(a|s)=0$. Note that $\tau = \pi\circ\beta$ for some $\pi\in\Delta_\mathcal A^\mathcal O$ by assumption and thus for $\eta\in\operatorname{int}(H)$ it holds that $\eta_{sa}=0$ if and only if
    \[ 0=\tau(a|s) = \sum_o \beta(o|s)\pi(a|o), \]
which holds if and only if $(a, o)\in I$ for every $o\in \mathcal O$ with $\beta(o|s)>0$. Hence, if we write $J\coloneqq\{(s, a)\mid (a, o)\in I \text{ for all } o\in\mathcal O \text{ with } \beta(o|s)>0\}$, we obtain 
\begin{align*}
    \operatorname{int}(H) & = \Big\{\eta\in \mathbb R^{\mathcal S\times\mathcal A} \mid \eta_{sa}\ge 0 \text{ and equality if and only if }(s, a)\in J, & \\ & \qquad\qquad\qquad\quad \langle w_\gamma^s,  \eta\rangle_{\mathcal S\times\mathcal A} =(1-\gamma) \mu_s \text{ for } s\in\mathcal S, & \\ & \qquad\qquad\qquad\quad p_{ao}(\eta) \ge 0 \text{ and equality if and only if }(a, o)\in I \Big\}.
\end{align*}
The number of critical points over this surface is upper bounded by the number of critical points over 
    \begin{align*}
         \mathcal V & = \{\eta\in\mathbb R^{\mathcal S\times\mathcal A} \mid \eta_{sa} = 0 \text{ for } (s, a)\in J,& \\ & \qquad\qquad\qquad\quad \langle w_\gamma^s, \eta\rangle_{\mathcal S\times\mathcal A} = (1-\gamma)\mu_s \text{ for } s\in\mathcal S, p_{ao}(\eta) = 0 \text{ for } (a, o)\in I \}.
    \end{align*}
Now we want to apply \plaineqref{app:eq:algebraicdegreepolynomialoptimization:3} and note that the objective $p_0=r$ is generic. Further, we see that there are $\lvert I\rvert$ non-linear constraints and hence in the notation of \plaineqref{app:eq:algebraicdegreepolynomialoptimization:3} have $k=\lvert I\rvert$. Further, we can calculate to dimension and co-dimension of $\mathcal V$ as follows. Note that $F\to\mathcal V, \pi\mapsto\eta^\pi$ is a local parametrization of $\mathcal V$ (meaning it parametrizes a full dimensional subset of $\mathcal V$), which is injective and has full rank Jacobian everywhere. Hence, we have
    \[ l = \dim(\mathcal V) = \dim(F) = \lvert\mathcal S\rvert(\lvert \mathcal A\rvert - 1) - \lvert I\rvert = \lvert\mathcal S\rvert\lvert\mathcal A\rvert - \lvert\mathcal S\rvert-\lvert I \rvert. \]
The co-dimension of $\mathcal V$ is given by $\lvert\mathcal S\rvert\lvert\mathcal A\rvert - \dim(\mathcal V) = \lvert\mathcal S\rvert + \lvert I \rvert$ and with the notation from~\plaineqref{app:eq:algebraicdegreepolynomialoptimization:3}, we have $c = \lvert\mathcal S\rvert+\lvert I \rvert\ge k$. Further, it holds that $\deg(p_{ao})\le d_o$ and using \plaineqref{app:eq:algebraicdegreepolynomialoptimization:3} yields an upper bound of
\begin{align*}
    \prod_{(s, o)\in I} d_o \cdot \sum_{\sum_{(a, o)\in I}j_{ao} = m} \prod_{(a, o)\in I} (d_o-1)^{j_{ao}} = \prod_{o\in O} d_o^{k_o}\cdot\sum_{\sum_{o\in O} i_o = m} \prod_{o\in O} (d_o-1)^{i_o}.
\end{align*}
\end{proof}

\begin{remark}[The mean reward case]\label{rem:meanCase}
Theorem~\ref{cor:upperboundcriticalpoints} can be generalized to the mean reward case, i.e., to the case of $\gamma=1$ with some adjustments. Indeed, the proof can be carried out analogously, however, the characterization of $\mathcal N_1^\mu$ has the extra linear condition that $\sum_{sa}\eta_{sa}=1$, see also Proposition~\ref{cor:chardisstadis}. Indeed, in the mean reward case we have with the notation from the proof above
\begin{align*}
    \operatorname{int}(H) & = \Big\{\eta\in \mathbb R^{\mathcal S\times\mathcal A} \mid \eta_{sa}\ge 0 \text{ and equality if and only if }(s, a)\in J,& \\ &
    \qquad\qquad\qquad\quad \langle w_\gamma^s, \eta \rangle_{\mathcal S\times\mathcal A} = 0 \text{ for } s\in\mathcal S, \sum_{sa} \eta_{sa} = 1,& \\ &
    \qquad\qquad\qquad\quad p_{ao}(\eta) \ge 0 \text{ and equality if and only if }(a, o)\in I \Big\}.
\end{align*}
Hence, the upper bound in~\plaineqref{eq:generalupperboundcriticalpoints} remains valid if we set 
\begin{align}
    l\coloneqq & \dim \Big\{\eta\in \mathbb R^{\mathcal S\times\mathcal A} \mid \eta_{sa}= 0 \text{ for }(s, a)\in J, \langle w_\gamma^s, \eta \rangle_{\mathcal S\times\mathcal A} =0 \text{ for } s\in\mathcal S,& \nonumber\\ &
    \qquad\qquad\qquad\qquad \sum_{sa} \eta_{sa} = 1, p_{ao}(\eta) = 0 \text{ for }(a, o)\in I \Big\}.\label{eq:mean-dimension}
\end{align}
In the discounted case we obtained an explicit formulation for $l$. 
In the mean case the value obeys a case distinction depending, in particular, on whether the all ones vector $\mathds{1}_\mathcal S$ lies in the span of the vectors $w_\gamma^s$. However, the value can be computed from the above expression \plaineqref{eq:mean-dimension} in any given specific case. 
\end{remark}

\begin{corollary}[Critical points of MDPs]\label{rem:criticalPointsMDPs}
    Consider an MDP $(\mathcal S,\mathcal A,\alpha,r)$, $\gamma\in(0,1)$, assume that $r$ is generic, that $\beta\in\mathbb R^{\mathcal S\times\mathcal O}$ is invertible, and that Assumption~\ref{ass:positivity} holds. Then, every critical point $\pi\in\Delta_\mathcal A^\mathcal O$ of the discounted expected reward function is deterministic.
\end{corollary}
\begin{proof}
We evaluate the bound of Equation~\plaineqref{eq:generalupperboundcriticalpoints}. If the face is not a vertex, then the corresponding index set $I\subseteq\mathcal A\times\mathcal O$ satisfies $\lvert I\rvert < \lvert \mathcal O\rvert (\lvert \mathcal A\rvert-1)$ and thus in the notation from Theorem~\ref{cor:upperboundcriticalpoints}  it holds that $l >0$. Note that $d_o=1$ for every $o\in\mathcal O$ and hence there is at least one factor in the product in~\plaineqref{eq:generalupperboundcriticalpoints} that vanishes and so does the whole expression in~\plaineqref{eq:generalupperboundcriticalpoints}. 
\end{proof}

\begin{remark}[Geometry around the critical points]
The key argument in the proof of Theorem~\ref{cor:upperboundcriticalpoints} is that a critical point $\pi\in\Delta_\mathcal A^\mathcal O$ of the reward function corresponds to a critical point $\eta$ of a linear function over a multi-homogeneous variety $\mathcal V$, where the defining polynomials can be computed by the means of Proposition~\ref{cor:chardisstadis} and Proposition~\ref{prop:corlinine}. A closer study of this variety would shed light into the geometry of the loss landscape around the critical points, which has important implications for gradient based methods.
\end{remark}

\begin{remark}[Efficient design of observation mechanisms]
The bound~\plaineqref{eq:generalupperboundcriticalpoints} could be used to design observation mechanisms in such a way that the reward function has the least critical points, which would potentially make the system more approachable for gradient based methods.
\cite{rauh2021continuity} showed that planning in POMDPs is stable under perturbations of the observation kernel $\beta$. More precisely, consider two observation kernels $\beta, \beta'\in\Delta_\mathcal O^\mathcal S$ satisfying $\lVert \beta(\cdot|s) - \beta'(\cdot|s)\rVert_{TV} = \sum_o\lvert\beta(o|s) - \beta'(o|s)\rvert/2\le\varepsilon$ for every $s\in\mathcal S$. Then if $\pi\in\Delta_\mathcal A^\mathcal O$ is an optimal policy of $(\mathcal S, \mathcal A, \mathcal O, \alpha, \beta', r)$, then it is a $2\varepsilon\gamma\lVert r \rVert_\infty /(1-\gamma)$-optimal policy of $(\mathcal S, \mathcal A, \mathcal O, \alpha, \beta, r)$. Hence, if $\beta$ does not fulfill the invertability assumption made in Theorem~\ref{cor:upperboundcriticalpoints} an arbitrary small perturbation of it does (given that $\beta$ is a square matrix) and hence Theorem~\ref{cor:upperboundcriticalpoints} provides an upper bound on the number of critical points of an approximate problem. Further, note that the faces, which are guaranteed to contain an optimal policy by~\cite{montufar2017geometry} might be considerably fewer for the POMDP $(\mathcal S, \mathcal A, \mathcal O, \alpha, \beta', r)$. The bound~\plaineqref{eq:generalupperboundcriticalpoints} could be used to identify the best perturbations of a given magnitude to obtain a problem with a minimal number of critical points. 
\end{remark}

\begin{remark}[Design of policy models]
Knowledge about the location of critical points of the reward function can be used to design policy models, which provably include those critical points and therefore also the optimal policy. 
\end{remark}

\subsection{Number of critical points in a two-action blind controller}\label{app:subsec:blind}

This subsection is devoted to the proof of Proposition \ref{prop:blind} that we restate here for convenience.
\propblind*
Before we present the proof of this result, we discuss how the bound on the rational degree of the reward function leads to am upper bound on the number of critical points. We consider a blind controller and restrict ourselves to the discounted case $\gamma\in(0, 1)$. We associate the policy polytope $\Delta_\mathcal A^\mathcal O$ with $[0, 1]$ and for $p\in[0, 1]$ we write $\pi_p$ and $\eta^p$ for the associated policy and the state-action frequency.
From Theorem \ref{theo:DegPOMDPs} we know that the reward function $R=f/g\colon[0, 1]\to\mathbb R$ is a rational function of degree at most $k\coloneqq\lvert \mathcal S\rvert$, which is well known to possess at most $2k-2$ critical points.
Hence, there are at most this many critical points in the interior $(0, 1)$ if the reward function is not constant. 
Now we use the geometric description of the set of state-action frequencies and yields a refined bound. 
\begin{proof}[Proof of Proposition~\ref{prop:blind}]
First, we note that since $\mu$ is generic and $\gamma<1$ Assumption~\ref{ass:positivity} is satisfied. In this case, the combinatorial part is simple, since there are only two zero-dimensional faces of the state-action frequencies (corresponding to the endpoints of the unit interval) and one one-dimensional face (corresponding to the interior of the unit interval). 
Let us set
    \[ \mathcal U = \{\eta\in\mathbb R^{\mathcal S\times\mathcal A} \mid \langle w_\gamma^s, \eta\rangle_{\mathcal S\times\mathcal A}=(1-\gamma)\mu_s \text{ for all } s\in\mathcal S\}, \]
where $w_\gamma^s\coloneqq \delta_s\otimes\mathds{1}_\mathcal A-\gamma\alpha(s|\cdot, \cdot)$. By Proposition~\ref{cor:chardisstadis} and Example~\ref{ex:blind-controller} the set of discounted state-action frequencies is given by
    \[ \mathcal N_\gamma^{\mu, \beta} = \mathcal N_\gamma^{\mu} \cap \mathcal D_1 = [0, \infty)^{\mathcal S\times\mathcal A}\cap \mathcal U \cap\mathcal D_1. \]
Like above, we associate the policy polytope $\Delta_\mathcal A^\mathcal O$ with $[0, 1]$ and for $p\in[0, 1]$ we write $\pi_p$ and $\eta^p$ for the associated policy and the state-action frequency.  We aim to bound the number of critical points of the reward function over $(0, 1)$ or equivalently the number of critical over $\{\eta^p\mid p\in(0, 1)\}$ where we used that Assumption~\ref{ass:positivity} holds. Further, recall that $\eta^p(a|s) = \eta_{sa}^p/\rho^p_s$, we have that
\begin{align*}
    \{\eta^p\mid p\in(0, 1)\} & = \{\eta\in\mathcal N_\gamma^{\mu, \beta} \mid \eta(a|s)>0 \text{ for all } s\in\mathcal S, a\in\mathcal A \} \\ & = \{\eta\in\mathcal N_\gamma^{\mu, \beta} \mid \eta_{sa}>0 \text{ for all } s\in\mathcal S, a\in\mathcal A \} \\ & = (0, \infty)^{\mathcal S\times\mathcal A}\cap \mathcal U \cap\mathcal D_1.
\end{align*}
Thus the number of critical points over $\{\eta^p\mid p\in(0, 1)\}$ are upper bounded by the number of critical points on $\mathcal U \cap\mathcal D_1$. Note that if $\alpha$ and $\mu$ are generic, the subspace $\mathcal U$ is in general position. Further, its dimension is $\lvert\mathcal S\rvert\lvert\mathcal A\rvert-\lvert\mathcal S\rvert = \lvert\mathcal S\rvert$, where we used $\lvert\mathcal A\rvert=2$. Hence, the number of complex solutions to the KKT conditions over $\mathcal U \cap\mathcal D_1$ are given by the $k$-th polar degree $\delta_k(\mathcal D_1)$, where $k\coloneqq \lvert\mathcal S\rvert$, where we also used the genericity of the reward vector. We can compute the polar degree using the formula presented by \cite{CELIK2021855} to obtain
\begin{align*}
    \delta_{ k }(\mathcal D_1) & = \sum_{s=0}^{ k -2 k + k + 2 } (-1)^s \binom{ k -s+1}{2 k - (k +1)} ( k -s)!\left(\sum_{i+j=s}\frac{\binom{ k }{i}}{( k -1-i)!}\cdot\frac{\binom{2}{j}}{(2-1-j)!}\right) \\
    & = \sum_{s=0}^{2} (-1)^s \binom{ k -s+1}{k -1} ( k -s)!\left(\sum_{i+j=s}\frac{\binom{ k }{i}}{( k -1-i)!}\cdot\frac{\binom{2}{j}}{(2-1-j)!}\right).
\end{align*}
We calculate the three individual terms to be 
\begin{align*}
    \binom{ k+1}{ k -1} k!\left(\sum_{i+j=0}\frac{\binom{ k }{i}}{( k -1-i)!}\cdot\frac{\binom{2}{j}}{(2-1-j)!}\right) = \frac{(k+1)k}{2}\cdot k!\cdot\frac{\binom{ k }{0}}{( k -1)!}\cdot\frac{\binom{2}{0}}{1!} = \frac{(k+1)k^2}{2}, 
\end{align*}
and
\begin{align*}
    -\binom{k}{ k -1} (k-1)!\left(\frac{\binom{ k }{1}}{( k -2)!}+\frac{\binom{2}{1}}{(k-1)!}\right) = -k!\left(\frac{k}{(k-2)!} + \frac2{(k-1)!}\right) = -k^2(k-1)-2k, 
\end{align*}
and  
\begin{align*}
    \binom{k-1}{ k -1} (k-2)!\left(\frac{\binom{ k }{1}\binom21}{( k -2)!}+\frac{\binom{k}{2}}{(k-3)!}\right) & = (k-2)!\left(\frac{2k}{(k-2)!} + \frac{k(k-1)}{2(k-3)!}\right) \\ & = 2k + \frac{k(k-1)(k-2)}{2}. 
\end{align*}
Adding those three summands we obtain
    \[ \delta_k(\mathcal D_1) = \frac{k^3+k^2+k^3-3k^2+2k}{2}-k^3+k^2-2k+2k = k. \]
Note that there is also a more structural argument to obtain this polar degree. In fact, the polar degree $\delta_l(\mathcal D_1) = 0$ for $l>\dim(\mathcal D_1^\ast)-1$, where $\mathcal D_1^\ast$ denotes the dual variety of $\mathcal D_1$ \citep{CELIK2021855}. Note that in the case of $k\times 2$ matrices $\mathcal D_1^\ast=\mathcal D_1$ \citep{Draisma2016} and hence it holds that $\delta_l(\mathcal D_1)=0$ for $l>\dim(\mathcal D_1)-1=k$ \citep{spaenlehauer2012solving}. The largest non-zero polar degree is equal to the degree of the dual variety \citep{Draisma2016} and hence we obtain $\delta_k(\mathcal D_1)=\operatorname{degree}(\mathcal D_1^\ast) = \operatorname{degree}(\mathcal D_1) = k$ \citep{spaenlehauer2012solving}.
\end{proof}

Note that this bound is not necessarily sharp, since it is exactly the number of complex solutions of the criticality equations over $\mathcal U\cap\mathcal D_1$. Overall, we have seen that the study of the algebraic properties of the reward function provided an upper bound on the number of critical points of the problem, which can be improved using the description of the state-action frequencies as a basic semialgebraic set and employing tools from algebraic geometry.

\subsection{Examples with multiple smooth and non-smooth critical points}\label{app:examples:multiplecriticalpoints}

It is the goal of this example to demonstrate that for a blind controller multiple critical points can occur in the interior $(0, 1)\cong\operatorname{int}(\Delta_\mathcal A^\mathcal O)$ as well as at the two endpoints of $[0, 1]\cong\Delta_\mathcal A^\mathcal O$ of the policy polytope. We refer to such points as smooth and non-smooth critical points.
We consider a blind controller with one observation, two actions $a_1, a_2$ and three states $s_1, s_2, s_3$ and a deterministic transition kernel $\alpha$ and reward described by the graph shown in Figure~\ref{fig:my_label}. 
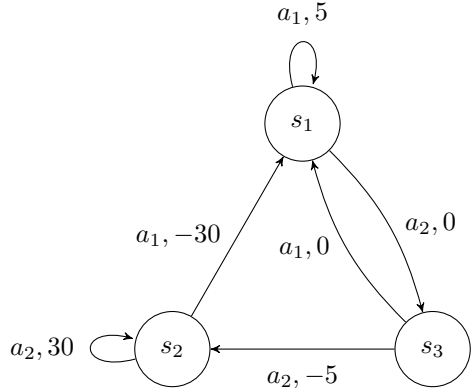
\begin{figure}[H]
    \centering
    \begin{tikzpicture}
        \node[shape=circle,draw=black,minimum size=1cm] (A) at (0,0) {\(s_1\)};
        \node[shape=circle,draw=black,minimum size=1cm] (B) at (-1.73,-3) {\(s_2\)};
        \node[shape=circle,draw=black,minimum size=1cm] (C) at (1.73,-3) {\(s_3\)};
        \path [->] (A) [in=105,out=-45] edge node[right=.1cm] {\(a_2, 0\)} (C);
        \path [->] (B) [] edge node[left=.1cm] {\(a_1, -30\)} (A);
        \path [->] (A) [loop above] edge node[above=.1cm] {\(a_1, 5\)} (A);
        \path [->] (B) [loop left] edge node[left=.1cm] {\(a_2, 30\)} (B);
        \path [->] (C) [] edge node[below=.1cm] {\(a_2, -5\)} (B);
        \path [->] (C) [out=135,in=-75] edge node[left=.1cm] {\(a_1, 0\)} (A);
    \end{tikzpicture}
    \caption{Graph describing the deterministic transition kernel $\alpha$ and the associated instantaneous rewards.}
    \label{fig:my_label}
\end{figure}
We make the usual identification $[0, 1]\cong\Delta_\mathcal A^\mathcal O$, where we associate $p$ with $\pi(a_1|o)$. In Figure~\ref{fig:my_label_2}, the reward function is plotted on the left for the three initial conditions $\mu=\delta_{s_1}, \delta_{s_2}, \delta_{s_3}$. It is apparent that the reward has two critical points in the interior of the policy polytope $\Delta_A^\mathcal O\cong[0, 1]$ for the two initial conditions $\mu=\delta_{s_1}=\delta_{s_3}$. For $\mu=\delta_{s_2}$, there are two strict local maxima on the two endpoints of the interval.
In this example, the bound from Proposition~\ref{prop:blind} ensures that there are at most $\lvert\mathcal S\rvert=3$ critical points in the interior and at most $\lvert\mathcal S\rvert+2=5$ critical points in the whole policy polytope. We see that those bounds are not sharp in this specific setting. Note that this example is stable under small perturbations of the transition kernel and reward vector and hence can occur for generic $\alpha$ and $r$. The right hand side of Figure~\ref{fig:my_label_2} shows a three dimensional random projection of the set of feasible discounted state-action frequencies. By Theorem~\ref{theo:DegPOMDPs} they are a curve in $\mathbb R^{\mathcal S\times\mathcal A}\cong\mathbb R^6$ with an injective rational parametrization of degree at most $\lvert\mathcal S\rvert=3$.
\begin{figure}[h]
    \centering
    \includegraphics[width=0.52\textwidth]{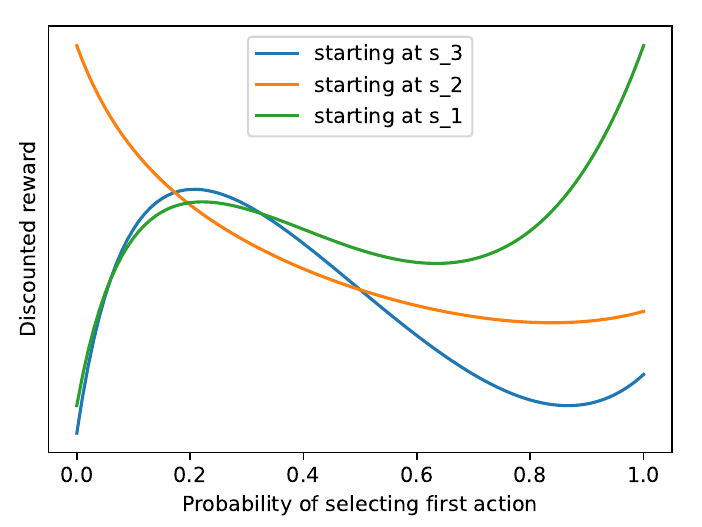}
    \includegraphics[width=0.44\textwidth]{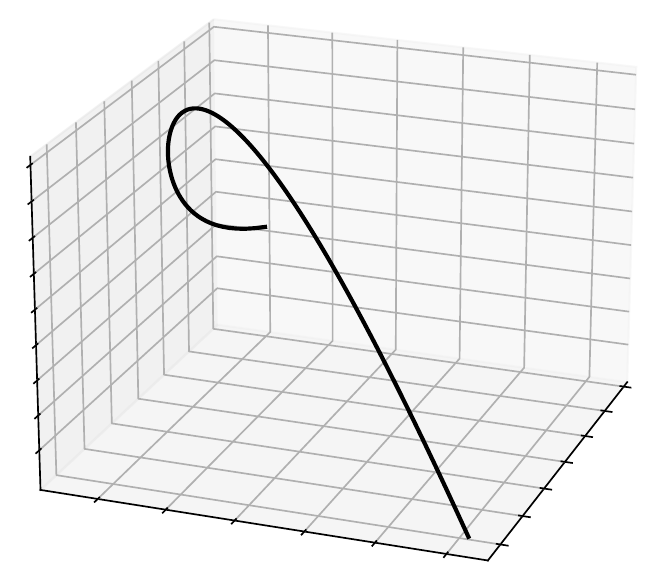}
    \caption{Plot of the reward function for initial distributions $\delta_{s}$ on the left and of a three-dimensional random projection of the set of feasible discounted state-action frequencies on the right. 
    }
    \label{fig:my_label_2}
\end{figure}

\subsection{(Super)level sets of (PO)MDPs}\label{app:subsec:connectednes}

\subsubsection{Connectedness of superlevel sets in MDPs}

\begin{theorem}[Existence of improvement paths in MDPs]\label{app:theo:superlevel}
For every policy $\pi\in\Delta_\mathcal A^\mathcal S$, there is a continuous path connecting $\pi$ to an optimal policy along which the reward is monotone. If further $\pi\mapsto\eta^\pi$ is injective, the reward is strictly monotone along this path, if $\pi$ is suboptimal. In particular, the superlevel sets of MDPs are connected. 
\end{theorem}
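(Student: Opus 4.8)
The plan is to run the whole argument in the space of state-action frequencies, where the reward is linear and the feasible set is convex. By Proposition~\ref{cor:chardisstadis} the set $\mathcal N_\gamma^\mu$ is a polytope and $R_\gamma^\mu(\pi) = \langle r, \eta_\gamma^{\pi,\mu}\rangle$ is the restriction of a linear functional. Since $\mathcal N_\gamma^\mu$ is compact, $\eta\mapsto\langle r,\eta\rangle$ attains its maximum at some $\eta^\ast\in\mathcal N_\gamma^\mu$; by Proposition~\ref{prop:invpara} there is a policy $\pi^\ast$ with $\eta_\gamma^{\pi^\ast,\mu}=\eta^\ast$, and it is optimal. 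For an arbitrary $\pi$ set $\eta^0\coloneqq\eta_\gamma^{\pi,\mu}$ and consider the segment $\eta_t\coloneqq(1-t)\eta^0+t\eta^\ast$, $t\in[0,1]$, which lies in $\mathcal N_\gamma^\mu$ by convexity. Along it $\langle r,\eta_t\rangle=(1-t)\langle r,\eta^0\rangle+t\langle r,\eta^\ast\rangle$ is non-decreasing, and strictly increasing whenever $\pi$ is suboptimal. The whole difficulty is therefore to lift $t\mapsto\eta_t$ to a continuous path of policies.

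First I would lift on the open interval by conditioning: on every state with $\rho_t(s)\coloneqq\sum_a\eta_t(s,a)>0$ put $\pi_t(\cdot\mid s)\coloneqq\eta_t(s,\cdot)/\rho_t(s)$. Writing $\rho_t=(1-t)\rho^0+t\rho^\ast$ one sees that $\rho_t(s)>0$ for all $t\in(0,1)$ exactly when $\rho^0(s)>0$ or $\rho^\ast(s)>0$, and on those states $\pi_t(\cdot\mid s)$ is a ratio of affine functions of $t$ with non-vanishing denominator, hence continuous. On the exceptional states, where $\rho^0(s)=\rho^\ast(s)=0$, one has $\eta_t(s,\cdot)\equiv0$, and I would simply fix $\pi_t(\cdot\mid s)\coloneqq\pi(\cdot\mid s)$. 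By Proposition~\ref{prop:invpara} this defines for each $t\in(0,1)$ a policy with $\eta_\gamma^{\pi_t,\mu}=\eta_t$, so $R_\gamma^\mu(\pi_t)=\langle r,\eta_t\rangle$ inherits the desired monotonicity.

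To reach the endpoints I would pass to limits: as $t\to0^+$ the formula converges to a policy $\pi^0$ with $\eta_\gamma^{\pi^0,\mu}=\eta^0$, and $\pi^0$ can disagree with $\pi$ only on states $s$ with $\rho^0(s)=0$; as $t\to1^-$ it converges to $\pi^\ast$. The key point is that for $\gamma\in(0,1)$ a state with $\rho^0(s)=0$ is never visited under $\pi$, so altering the action there leaves the occupancy measure, and hence the reward, unchanged. I would therefore prepend the linear segment $(1-u)\pi+u\pi^0$, $u\in[0,1]$, which differs from $\pi$ only on these unvisited states and thus carries constant state-action frequency $\eta^0$ and constant reward. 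Concatenating this constant piece with the lifted segment yields a continuous path from $\pi$ to the optimal $\pi^\ast$ along which $R_\gamma^\mu$ is monotone. Under the injectivity hypothesis, Assumption~\ref{ass:positivity} holds, so $\rho^0>0$ everywhere, there are no exceptional states, the prepended piece is trivial and equals $\pi=\pi^0$, and the reward is strictly increasing on the segment whenever $\langle r,\eta^\ast\rangle>\langle r,\eta^0\rangle$, i.e. whenever $\pi$ is suboptimal. For the connectedness of a superlevel set $\{\pi: R_\gamma^\mu(\pi)\ge c\}$, note that every $\pi$ in it is joined to the fixed optimizer $\pi^\ast$ by a path along which the reward only increases, so the whole path stays in the superlevel set; since all points connect to the common point $\pi^\ast$, the set is path-connected, hence connected.

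The main obstacle I anticipate is precisely the continuity of the conditioning lift at states whose discounted occupancy degenerates, and the reconciliation of the limiting policy $\pi^0$ with the prescribed starting policy $\pi$. I expect the crux to be the observation that the policy values on never-visited states are irrelevant to both $\eta_\gamma^{\pi,\mu}$ and $R_\gamma^\mu$, which is what simultaneously licenses the continuous lift on $(0,1)$ and makes the prepended correction segment reward-constant.
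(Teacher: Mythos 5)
Your proposal is correct and takes essentially the same route as the paper's proof: linearly interpolate in the frequency polytope $\mathcal N_\gamma^\mu$, lift the segment to policies by conditioning $\pi_t(\cdot|s)=\eta_t(s,\cdot)/\rho_t(s)$ with the degenerate states ($\rho_t\equiv 0$ or $\rho^0(s)=0<\rho^\ast(s)$) handled exactly as in the paper, and glue on a reward-constant segment from $\pi$ to the lifted path's starting policy $\pi^0$, justified via Proposition~\ref{prop:invpara}. The only (cosmetic) divergence is in the strict-monotonicity step: you invoke ``injectivity implies Assumption~\ref{ass:positivity}'', which is true for $|\mathcal A|\ge 2$ but needs a one-line argument, whereas the paper's logic gives it immediately—injectivity and $\eta_\gamma^{\pi^0,\mu}=\eta^0=\eta_\gamma^{\pi,\mu}$ force $\pi^0=\pi$, so the constant segment is trivial and the remaining piece is strictly increasing.
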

\begin{proof}
Let us fix $\pi\in\Delta_\mathcal A^\mathcal S$ and set $\eta_0\coloneqq \eta^{\pi}$ and $\eta_1$ be a global optimum and $\eta_t$ be the linear interpolation and $\rho_t$ be the corresponding state marginal. Note that for $s\in\mathcal S$ it holds that either $\rho_t(s)>0$  for all $t\in(0, 1)$ or $\rho_t(s)=0$ for all $t\in[0, 1]$. In the latter case, we can set $\pi_t(\cdot|s)$ to be an arbitrary element in $\Delta_\mathcal A$. For the other states and $t\in(0, 1)$ we can define the policy through conditioning by $\pi_t(a|s)\coloneqq \eta_t(s, a)/\rho_t(s)$ and will continuously extend the definition to $t\in\{0, 1\}$ in the following. If $\rho_{0}(s)>0$ or $\rho_{1}(s)>0$, then the definition extends naturally. Suppose that $\rho_{0}(s)=0$, then we now that $\rho_1(s)>0$ since otherwise $\rho_t(s)=0$ for all $t\in[0, 1]$. Now for $t>0$ it holds that
    \[ \pi_t(s, a) = \frac{\eta_t(s, a)}{\rho_t(s)} = \frac{(1-t)\eta_0(s, a) + t\eta_1(s, a)}{(1-t)\rho_0(s) + t\rho_1(s)} = \frac{t\eta_1(s, a)}{t\rho_1(s)} = \frac{\eta_1(s, a)}{\rho_1(s)}, \]
which extends continuously to $t=0$. If $\rho_{1}(s)=0$, then like before, $\pi_t(\cdot|s)$ does not depend on $t$ and we can extend it to $t=1$. Now we have constructed a continuous path $\pi_t$, such that $\eta^{\pi_t}=\eta_t$ and thus
    \[ R(\pi_t) = \langle r, \eta_t \rangle = (1-t) \langle r, \eta_0 \rangle + t\langle r, \eta_1 \rangle = R(\pi_0) + t(R^\ast - R(\pi_0)), \]
which is strictly increasing if $\pi_0$ is suboptimal. It remains to construct a continuous path between $\pi_0$ and $\pi$. Note that if $\rho_0(s)>0$, the policies $\pi_0$ and $\pi$ agree on the state $s$ and so does the linear interpolation between the two policies. Now, by Proposition \ref{prop:invpara} we see that every linear interpolation between $\pi_0$ and $\pi$ has the state-action distribution $\eta_0$. Gluing the two paths, we obtain a path that first leaves the state-action distribution unchanged and then increases the reward strictly up to optimality.
\end{proof}

\subsubsection{The semialgebraic structure of level and superlevel sets for POMDPs}\label{app:subsubsec:superlevelsetsPOMDPs}

Consider a POMDP $(\mathcal S, \mathcal A, \mathcal O, \alpha, \beta, r)$ and fix a discount rate $\gamma\in(0, 1)$ as well as an initial condition $\mu\in\Delta_\mathcal S$. The levelset
    \[ L_a \coloneqq \{\pi\in\Delta_\mathcal A^\mathcal O\mid R_\gamma^\mu(\pi) = a \} \]
of the reward function is the intersection of a variety generated by one determinantal polynomial of degree at most $\lvert\mathcal S\rvert$ with the policy polytope $\Delta_\mathcal A^\mathcal O$. Indeed, by Theorem~\ref{theo:DegPOMDPs} the reward function $R_\gamma^\mu$ is the fraction $f/g$ of two determinantal polynomials $f$ and $g$ of degree at most $\lvert\mathcal S\rvert$. The level set consists of all policies, such that $f(\pi)=a g(\pi)$. Thus, the levelset is given by
    \[ L_a = \Delta_\mathcal A^\mathcal O \cap\left\{x\in\mathbb R^{\mathcal O\times\mathcal A} \mid f(x)-ag(x)=0\right\}. \]
Analogously, a superlevel set is the intersection
    \[ \Delta_\mathcal A^\mathcal O \cap\left\{x\in\mathbb R^{\mathcal O\times\mathcal A} \mid f(x)-ag(x)\ge 0\right\} \]
of a basic semialgebraic generated by one determinantal polynomial of degree at most $\lvert\mathcal S\rvert$ with the policy polytope $\Delta_\mathcal A^\mathcal O$. In particular, both the levelset and superlevel sets of POMDPs are semialgebraic sets defined by linear inequalities and equations (corresponding to the conditional probability polytope $\Delta_\mathcal A^\mathcal O$) and a determinantal (in)equality of degree at most $\lvert\mathcal S\rvert$. This description can be used to bounds the number of connected components, which captures important properties of the loss landscape of an optimization problem \citep{barannikov2019barcodes, catanzaro2020moduli}. By a theorem due to Łojasiewicz, level and superlevel sets possess finitely many connected (semialgebraic) components \citep[][]{ruiz1991semialgebraic,10.5555/1197095} and there exist algorithmic approaches to computing the number of connected components \citep{grigor1992counting} as well as explicits upper bounds, which involve the dimension, the number of defining polynomials as well as their degrees \citep{basu2003different, basu2014algorithms}. Those results are generalizations of the classic result due to Milnor and Thom which bounds the sum of all Betti numbers of a variety. 
If we apply the Milnor-Thom theorem to the variety $\mathcal V$ we obtain that there are at most $\lvert\mathcal S\rvert(2\lvert\mathcal S\rvert-1)^{\lvert\mathcal O\rvert\lvert\mathcal A\rvert-1}$ many connected components of $\mathcal V$. This bound neglects the determinantal nature of the defining polynomial and might therefore be coarse. Using an analogue approach, we can also study the level and superlevel sets of the reward function in the space of feasible state-action frequencies. Indeed, they are the intersections of the hyperplane $\{\eta\in\mathbb R^{\mathcal S\times\mathcal A}\mid \langle r,\eta\rangle_{\mathcal S\times\mathcal A} = a\}$ and halfspace $\{\eta\in\mathbb R^{\mathcal S\times\mathcal A}\mid \langle r, \eta\rangle_{\mathcal S\times\mathcal A} \ge a\}$ with the semialgebraic set $\mathcal N_\gamma^{\mu, \beta}$ of state-action frequencies.

\section{Possible Extensions}\label{app:sec:extensions}

\subsection{Application to finite memory policies}\label{app:subsec:finitememory}

In general, it is possible to reduce POMDPs with finite memory policies to a POMDP with memoryless policies by augmenting the state and observation space with the memory. Say we consider policies with a memory that stores the last $k$ observations that were made. Then we could set $\tilde{\mathcal S}\coloneqq\mathcal S\times\mathcal O^{k-1}$ and $\tilde{\mathcal O}\coloneqq \mathcal O^k$. If the first state is $s_0$ and the first observation that is being made is $o_0$, then we will associate it with $\tilde s_0\coloneqq (s_0, o_0, \dots, o_0)\in\tilde{\mathcal S}$ and $\tilde o_0\coloneqq (o_0, \dots, o_0)\in\tilde{\mathcal O}$ respectively. If after $t$ steps, the current state is $\tilde s_t = (s_t, o_{t-k}, \dots, o_{t-1})$ and the next observation is $o_t$, then we set $\tilde o_t\coloneqq (o_{t-k}, \dots, o_t)$.
An analogue strategy can be taken when the memory does consist of more than the history of observations and for example includes the history of decision. It remains open to explore the implications of the translation of our results to policies with internal memory with this identification. 

\subsection{Polynomial POMDPs}

\cite{zahavy2021reward} consider MDPs, where the objective is a convex function of the state-action frequency, i.e., where $R^\mu_\gamma(\pi) = f(\eta^{\pi, \mu}_\gamma)$ for some convex function $f\colon\mathbb R^{\mathcal S\times\mathcal A}\to\mathbb R$ and coin the name of \emph{convex MDPs}. In analogy, we refer to the case where $f$ is a polyomial function as \emph{polynomial (PO)MDPs}. In polynomial POMDPs, the problem of reward maximization is by definition an optimization problem of a polynomial function over the set $\mathcal N_\gamma^{\mu, \beta}$ of feasible state-action frequencies. Since the feasible state-action frequencies form a basic semialgebraic set, the problem of reward maximization in polynomials is a polynomial optimization problem. Hence, the method of bounding the number of critical points as discussed in Section \ref{sec:optimization} generalizes to the case of polynomial reward criteria. If $f$ is a polynomial of degree $d$, the upper bound \plaineqref{eq:generalupperboundcriticalpoints} from Theorem~\ref{cor:upperboundcriticalpoints} takes the form
    \[ \prod_{o\in O} d_o^{k_o}\cdot\sum_{i+\sum_{o\in O} i_o = m} (d-1)^{i} \prod_{o\in O} (d_o-1)^{i_o}. \]
The use of polar degrees does not extend in general to the case of polynomial POMDPs, since they require a linear objective function, but can still be related to the algebraic degree for a quadratic objective as it is the case for the Euclidean distance function \citep{Draisma2016}.

\section{{Examples}}\label{app:sec:plots}

Here, we provide examples, which illustrate our findings. In particular, we compute the defining polynomial inequalities of the set of feasible state-action frequencies for the example from Figure~\ref{fig:range4} and a navigation problem in a grid world. We use an interior point method to solve the constrained optimization problem corresponding to the polynomial programming formulation of the respective POMPDs and see that in this offers a computationally feasible approach to the reward maximization problem.

\subsection{{Toy example of Figure~\ref{fig:range4}}} 
We discuss in detail a toy POMDP which we used to generate the plots in Figure~\ref{fig:range4}. We consider state, observation, and action spaces with two elements each, as well as following deterministic transition mechanism $\alpha$, observation mechanism $\beta$, and instantaneous reward $r$: 
\begin{align*}
\begin{array}{rl}
\mathcal S =&\!\!\!\! \{s_1, s_2\}, \\
\mathcal O =&\!\!\!\! \{o_1, o_2\},  \\
\mathcal A =&\!\!\!\! \{a_1, a_2\}, 
\end{array}
\quad
\begin{array}{rl}
\beta =&\!\!\!\! \begin{pmatrix} 1 & 0 \\ 1/2 & 1/2 \end{pmatrix},\\ 
\alpha(s_i|s_j, a_k) =&\!\!\!\!  \delta_{ik}, 
\end{array}
\quad 
\begin{array}{rl}
r(s, a)=&\!\!\!\!\delta_{s_1,s}\delta_{a_1,a}+\delta_{s_2,s}\delta_{a_2,a},\\
\gamma =&\!\!\!\! 0.5. 
\end{array}
\end{align*}
The transitions, instantaneous rewards, and observations are shown in Figure~\ref{fig:obs}. As an initial distribution we take the uniform distribution $\mu = (\delta_{s_1} + \delta_{s_2})/2$ over the states.

\begin{figure}[h]
    \centering
    \begin{tikzpicture}
    \node[shape=circle,draw=black,minimum size=1cm] (A) at (0,0) {\(s_1\)};
    \node[shape=circle,draw=black,minimum size=1cm] (B) at (4.5,0) {\(s_2\)};
    \path [->] (A) [in=150,out=30] edge node[above] {\(a_2\)} (B);
    \path [->] (B) [out=210,in=-30] edge node[below] {\(a_1\)} (A);
    \path [->] (A) [loop left] edge node[left] {\(a_1, +1\)} (A);
    \path [->] (B) [loop right] edge node[right] {\(a_2, +1\)} (B);
\end{tikzpicture}
\quad 
    \begin{tikzpicture}
    \node[shape=circle,draw=black,minimum size=.8cm] (A) at (0,0) {\(s_1\)};
    \node[shape=circle,draw=black,minimum size=.8cm] (B) at (0,-2) {\(s_2\)};
    \node[shape=circle,draw=black,minimum size=.8cm] (C) at (3,0) {\(o_1\)};
    \node[shape=circle,draw=black,minimum size=.8cm] (D) at (3,-2) {\(o_2\)};
    \path [->] (A) [] edge node[above] {\(1\)} (C);
    \path [->] (B) [] edge node[below=.1cm] {\(1/2\)} (C);
    \path [->] (B) [] edge node[below] {\(1/2\)} (D);
\end{tikzpicture}
\caption{The left shows the transition graph of the toy example. The right shows the observation mechanism; the numbers on the edges indicate the observation probabilities.}
\label{fig:obs}
\end{figure}
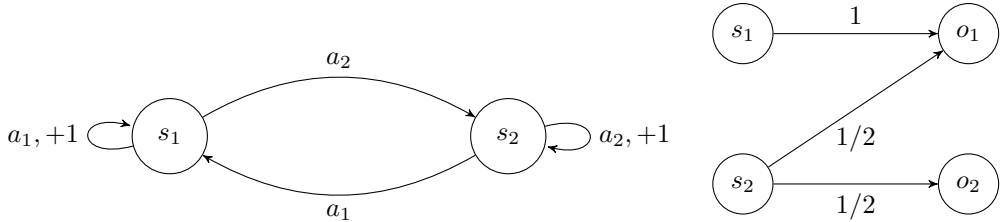

{
\paragraph{Polynomial programming formulation} 
To illustrate Theorem~\ref{theo:geometrydiscstadisc} (and Proposition~\ref{prop:corlinine}), we derive step-by-step the explicit polynomial program for the reward maximization in this toy example. For this, we first compute the defining inequalities of the set of feasible state-action frequencies. 
We begin with the linear constraints that define the set $\mathcal N_\gamma^\mu$ of state-action frequencies of the associated MDP, given in general form in Proposition~\ref{cor:chardisstadis}. 
In the remainder, we denote the state-action frequencies as matrices
    \[ \eta = \begin{pmatrix} \eta(s_1, a_1) & \eta(s_1, a_2) \\ \eta(s_2, a_1) & \eta(s_2, a_2) \end{pmatrix} = \begin{pmatrix} \eta_{11} & \eta_{12} \\ \eta_{21} & \eta_{22} \end{pmatrix}
    \in\mathbb R^{\mathcal S\times\mathcal A}. \]
{Following Proposition~\ref{cor:chardisstadis},} the linear inequalities are $\eta_{ij}\ge0$ for all $i, j \in \{1, 2\}$, 
and the linear equations are $\langle w_\gamma^{i}, \eta\rangle_{\mathcal S\times\mathcal A}=(1-\gamma)\mu_i = 1/4$ for $i\in\{1, 2\}$, whereby here 
    \[ w_\gamma^1 = \delta_1\otimes\mathds{1}_\mathcal A - \gamma \alpha(1|\cdot, \cdot) = \begin{pmatrix} 1 & 1 \\ 0 & 0 \end{pmatrix} - \frac12 \begin{pmatrix} 1 & 0 \\ 1 & 0 \end{pmatrix} = \frac12 \begin{pmatrix} 1 & 2 \\ -1 & 0 \end{pmatrix} \]
and 
    \[ w_\gamma^2 = \delta_2\otimes\mathds{1}_\mathcal A - \gamma \alpha(2|\cdot, \cdot) = \begin{pmatrix} 0 & 0 \\ 1 & 1 \end{pmatrix} - \frac12 \begin{pmatrix} 0 & 1 \\ 0 & 1 \end{pmatrix} = \frac12 \begin{pmatrix} 0 & -1 \\ 2 & 1 \end{pmatrix}. \]
Thus the two linear equations are 
\begin{align}
\begin{split}
    2\eta_{11} + 4\eta_{12} - 2 \eta_{21} & = 1 \\
    -2\eta_{12} + 4\eta_{21} + 2 \eta_{22} & = 1. 
\end{split}
\end{align}
It remains to compute the polynomial inequalities, which can be done using Remark~\ref{rem:defpoline}. 
We invert the matrix $\beta$ and obtain
    \[ \beta^+=\beta^{-1} = {\begin{pmatrix} 1 & 0 \\ -1 & 2 \end{pmatrix}}\in\mathbb R^{\mathcal S\times\mathcal O}. \]
Using the notation from Remark~\ref{rem:defpoline} we have $S_{o_1} = \{s_1\}$ and $S_{o_2}=\{s_1, s_2\}$, and thus the polynomial inequalities are 
\begin{align*}
    \eta_{11} & \ge 0 \\
    \eta_{12} & \ge 0 \\
    -\eta_{11}(\eta_{21}+\eta_{22}) + 2\eta_{21}(\eta_{11} + \eta_{12}) & \ge 0 \\
    -\eta_{12}(\eta_{21}+\eta_{22}) + 2\eta_{22}(\eta_{11} + \eta_{12}) & \ge 0. 
\end{align*}
The first two inequalities can be seen to be redundant and can be discarded. Finally, note that the objective function is given by 
    \[ \langle r, \eta \rangle_{\mathcal S\times\mathcal A} = \eta_{11} + \eta_{22}. \]
Hence, we have obtained the following explicit formulation of the reward maximization problem as a polynomial optimization problem: 
\begin{align}\label{eq:toy-polynomial-program}
\operatorname{maximize} \; \eta_{11} + \eta_{22} \quad \text{subject to }
\left\{
\begin{array}{rl}
    2\eta_{11} + 4\eta_{12} - 2 \eta_{21} -1 &\!\!\!\! = 0 \\
    -2\eta_{12} + 4\eta_{21} + 2 \eta_{22} -1 &\!\!\!\! = 0 \\
    \eta_{11}, \eta_{12}, \eta_{21}, \eta_{22} &\!\!\!\! \ge0 \\
    \eta_{11}\eta_{21} + 2\eta_{21}\eta_{12} - \eta_{11}\eta_{22} &\!\!\!\! \ge 0 \\
    \eta_{12}\eta_{22} + 2\eta_{11}\eta_{22} - \eta_{12}\eta_{21} &\!\!\!\! \ge 0.
\end{array}
\right.
\end{align}
}

\paragraph{{Solution with constrained and polynomial optimization tools}}
{The formulation \plaineqref{eq:toy-polynomial-program} allows us to use polynomial optimization algorithms, semi-definite programming (SDP) solvers, or relaxation hierarchies such as the popular Sum Of Squares (SOS).  Using the modeling language \texttt{JuMP} and the interior point solver \texttt{Ipopt} we directly obtained the globally optimal\footnote{The SOS relaxation provides a certificate for global optimality in this case.} solution to problem~\plaineqref{eq:toy-polynomial-program}  (rounded to three digits) 
{
    \[ \eta^\ast = \begin{pmatrix} 0.667 & 0 \\ 0.167 & 0.167 \end{pmatrix}. \]
The corresponding optimal state policy $\tau^\ast$ is obtained simply by conditioning on states, and any pre-image under the observation kernel is an optimal observation policy, in this case simply $\pi^\ast = \beta^{-1} \tau^\ast$, }
    \[ \pi^\ast  = \begin{pmatrix} 1 & 0 \\ 0 & 1 \end{pmatrix} \in\Delta_\mathcal A^\mathcal O. \] 
{This policy achieves a reward of $R^\mu_\gamma(\pi^\ast) = \langle r, \eta^\ast\rangle_{\mathcal{S}\times\mathcal{A}} = 0.833$ (rounded to three digits)}. 
The computations took $0.01\textup{s}$ (on a 2 GHz Quad-Core Intel Core i5 processor). }
{The command in \texttt{JuMP} to call the optimizer \texttt{Ipopt} is simply:} 
\begin{verbatim}
model = Model(optimizer_with_attributes(Ipopt.Optimizer)
@variable(model, \eta[1:2, 1:2]>=0)
@constraint(model, 2\eta[1, 1] + 4\eta[1, 2] - 2\eta[2, 1] == 1)
@constraint(model, -2\eta[1, 2] + 4\eta[2, 1] + 2\eta[2, 2] == 1)
@constraint(model, \eta[1, 1]\eta[2, 1] + 2\eta[2, 1]\eta[1, 2]
    - \eta[1, 1]\eta[2, 2] >= 0)
@constraint(model, \eta[1, 2]\eta[2, 2] + 2\eta[1, 1]\eta[2, 2] 
    - \eta[1, 2]\eta[2, 1] >= 0)
@NLobjective(model, Max, \eta[1, 1] + \eta[2, 2])
optimize!(model)
\end{verbatim}
{For completeness, we also provide the command to solve a relaxation in Python \texttt{SumOfSquares}, which is the following, although we found this to run a bit slower depending on the selected degree. Here we negate the objective in order to obtain a minimization problem and square the search variables (which are required to be non-negative) in order to obtain polynomials of even degree: }
\begin{verbatim}
e11, e12, e21, e22 = sp.symbols('e11 e12 e21 e22')
prob = poly_opt_prob([e11, e12, e21, e22], - e11**2 - e22**2, 
    eqs=[+ 2 * e11**2 + 4 * e12**2 - 2 * e21**2 - 1, 
    - 2 * e12**2 + 4 * e21**2 + 2 * e22**2 - 1, 
    + e11**2 + e12**2 + e21**2 + e22**2 - 1],
    ineqs=[e11**2 * e21**2 + 2 * e21**2 * e12**2 - e11**2 * e22**2,
    e12**2 * e22**2 + 2 * e11**2 * e22**2 - e12**2 * e21**2], deg=2)
prob.solve()
print(prob.value)
\end{verbatim}

\paragraph{{Policy gradient methods may not find a global optimum}} 
{We want to demonstrate an important problem of policy gradient methods, which is the well known possibility to get stuck in local optima, in the case of the toy example. For this, we used a tabular softmax policy model to represent the interior of the policy polytope $\Delta_{\mathcal A}^\mathcal O$, i.e. used the following parametric policy model 
    \[ \pi_\theta(a|o) \coloneqq \frac{\exp(\theta_{oa})}{\sum_{a'}\exp(\theta_{oa'})} \quad\text{for } \theta\in\mathbb R^{\mathcal O\times\mathcal A}. \]
We computed 15 policy gradient trajectories, where we used the policy gradient theorem (see Corollary~\ref{cor:policyGradient}) to compute the update directions. The starting positions where generated randomly, such that the initial conditions in the policy polytope $\Delta_\mathcal A^\mathcal O$ are uniformly random. The trajectories in the policy polytope $\Delta_\mathcal A^\mathcal O\cong [0, 1]^2$ are shown in Figure~\ref{fig:policyGradient}, which also shows a heat map of the reward function. We observe that 5 of the trajectories converge to a suboptimal strict local minimum. Note that this is not artefact of the parametrization, but of the fact that there is a strict local minimum and hence every naive local optimization method will suffer from this problem. The reward of the suboptimal local minimum 
    \[ \pi = \begin{pmatrix} 1 & 0 \\ 1 & 0 \end{pmatrix} \in\Delta_\mathcal A^\mathcal O  \]
is $0.747$ if rounded to 3 digits.}
\begin{figure}[h!]
    \centering
    \begin{tikzpicture}
    \node at (0,0) {\includegraphics[width=.6\textwidth]{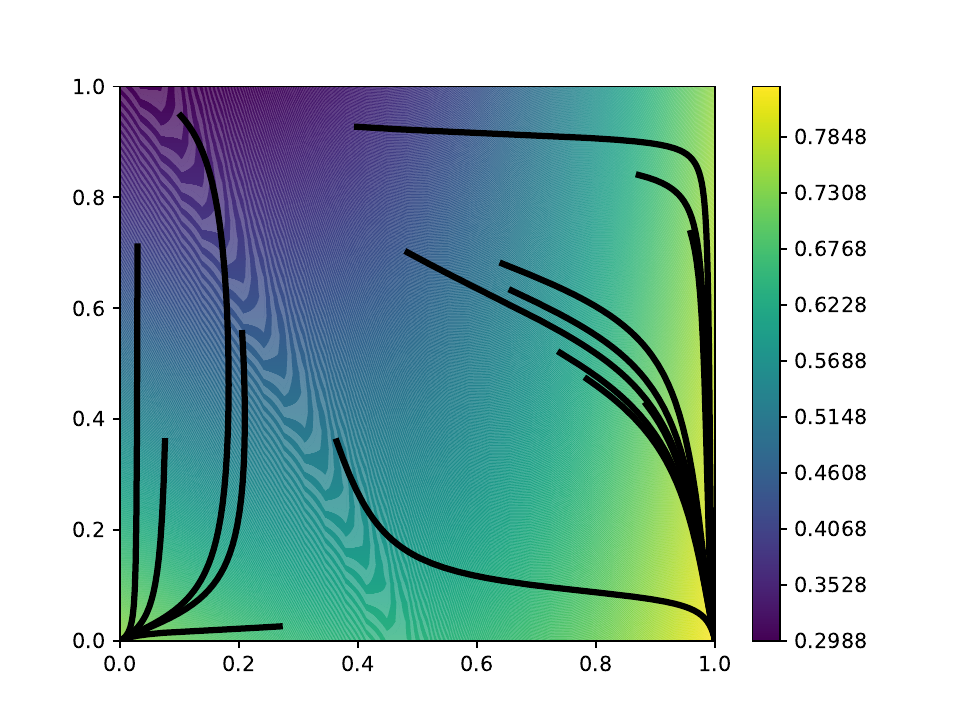}}; 
    \node at (-4,0) {\rotatebox{90}{$\pi(2|1)$}}; 
    \node at (-.5,-3) {$\pi(2|2)$};
    \node at (2.7,2.7) {$R$}; 
    \end{tikzpicture}
    \caption{Policy gradient optimization trajectories (shown as black curves) in the observation policy polytope. As expected, since the problem is non-convex and has several distinct local optimizers, the trajectories converge to different local optimizers depending on the initial policy.} 
    \label{fig:policyGradient}
\end{figure}

{
\paragraph{Number of critical points} 
We evaluate the bound of Theorem~\ref{cor:upperboundcriticalpoints} for this toy problem. First note that in this example the observation matrix $\beta$ is invertible with $\beta^{-1}  = \left(\begin{smallmatrix} 1 & 0 \\ -1 & 2 \end{smallmatrix}\right)$. 
Further, Assumption~\ref{ass:positivity} is satisfied for initial distributions $\mu$ with full support.  Hence, we can apply Theorem~\ref{cor:upperboundcriticalpoints}. Here, we have $\lvert\mathcal S\rvert = \lvert\mathcal A\rvert=\lvert\mathcal O\rvert=2$ and in the notation of Theorem~\ref{cor:upperboundcriticalpoints} we have $d_{o_1} = 1$ and $d_{o_2}=2$. As discussed in the main body, the bound evaluates to zero if we consider the interior of the policy polytope, which corresponds to $I=\emptyset$. This means that there are no critical points in the interior of the policy polytope, in other words, all optimal policies lie at the boundary and hence have one or more zero entries. The one-dimensional faces correspond to the index sets $\{(a_1, o_1)\}, \{(a_2, o_1)\}, \{(a_1, o_2)\}, \{(a_2, o_2)\}$. The choices $I=\{(a_1, o_1)\}, \{(a_2, o_1)\}$ correspond to the two edges on the left and right of the policy polytope $\Delta_\mathcal A^\mathcal O$ as shown in the top left corner of Figure~\ref{fig:range4} or alternatively to the two straight faces of the set $\mathcal N_\gamma^{\mu,\beta}$ of state-action distributions shown in the top right corner. 
The bound~\plaineqref{eq:generalupperboundcriticalpoints} evaluates to zero for those choices. 
This can also be seen in the bottom row in Figure~\ref{fig:range4}, where it is apparent that there are no critical points on the respective faces. For the choices $I=\{(a_1, o_2)\}, \{(a_2, o_2)\}$ the bound~\plaineqref{eq:generalupperboundcriticalpoints} evaluates to two. Indeed, these faces contain critical points. 
The bound is not sharp in this case since the actual number of critical points in any of the two faces of the policy polytope $\Delta_\mathcal A^\mathcal O$, which correspond to the two non-linear faces of $\mathcal N_\gamma^{\mu, \beta}$ is one. 
{Nonetheless, this illustrates how the theorem allows us to discard most faces of the polytope and focus the search for an optimal policy on just two faces.}  
}

\subsection{{Navigation in a grid world}}

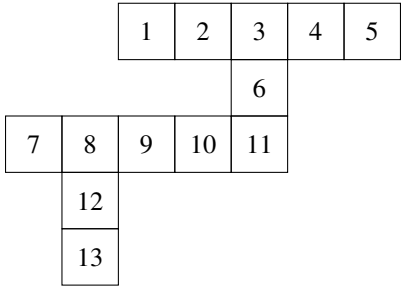
\begin{figure}[h!]
    \centering
    \begin{tikzpicture}
    \node at (0,0) [minimum size=.75cm, draw] {1};
    \node at (0.75,0) [minimum size=.75cm, draw] {2};
    \node at (1.5,0) [minimum size=.75cm, draw] {3};
    \node at (2.25,0) [minimum size=.75cm, draw] {4};
    \node at (3,0) [minimum size=.75cm, draw] {5};
    \node at (1.5,-0.75) [minimum size=.75cm, draw] {6};
    \node at (-1.5,-1.5) [minimum size=.75cm, draw] {7};
    \node at (-0.75,-1.5) [minimum size=.75cm, draw] {8};
    \node at (0,-1.5) [minimum size=.75cm, draw] {9};
    \node at (0.75,-1.5) [minimum size=.75cm, draw] {10};
    \node at (1.5,-1.5) [minimum size=.75cm, draw] {11};
    \node at (-0.75,-2.25) [minimum size=.75cm, draw] {12};
    \node at (-0.75,-3) [minimum size=.75cm, draw] {13};
    \end{tikzpicture}
    \caption{{Depiction of a grid world; the reward of $R$ is obtained in state 1, the actions are $\{R, L, U, D\}$ corresponding to movements to the right, left, up and down; observed are the possible directions that the agent can move in. Once the agent transitions to state $1$, she transfers to state $7$ and $13$ uniformly.}}
    \label{fig:Keyanmaze}
\end{figure}

{
We consider the grid world depicted in Figure~\ref{fig:Keyanmaze} with $13$ states and $7$ observations, where it is the goal to reach state $1$. 
The four actions are $\{R, L, U, D\}$ corresponding to the directions right, left, up and down on the grid. The transitions are deterministic and lead to the cell right, left, above or below the current cell, if this cell is admissible; from the goal state $1$ one transitions uniformly to the states $7$ and $13$ independently of the chosen action. Further, we consider deterministic observations, which correspond to the agent being able to observe its immediate four neighboring positions. This observation mechanism partitions the state space into the seven subsets $\{1, 7\}, \{2, 4, 9, 10\}, \{3, 8\}, \{5\}, \{6, 12\}, \{11\}, \{13\}$, which lead to the observations $o_1, o_2, o_3, o_4, o_5, o_6$ and $o_7$ respectively. Hence, by Remark~\ref{rem:polynomialConstraintsDeterministicObservations} the polynomial constraints are given by
\begin{align*}
    \eta_{7a} \rho_1
    - \eta_{1a} \rho_7
    & = 0 \quad \text{for all } a\in\mathcal A \\
    \eta_{4a} \rho_2
    - \eta_{2a} 
    \rho_4 & = 0 \quad \text{for all } a\in\mathcal A \\
    \eta_{8a} \rho_2 
    - \eta_{2a} \rho_8 
    & = 0 \quad \text{for all } a\in\mathcal A \\
    \eta_{10a} \rho_2 
    - \eta_{2a} \rho_{10} 
    & = 0 \quad \text{for all } a\in\mathcal A \\
    \eta_{9a}\rho_3 
    - \eta_{3a} \rho_9
    & = 0 \quad \text{for all } a\in\mathcal A \\
    \eta_{12a} \rho_6 
    - \eta_{6a} \rho_{12} 
    & = 0 \quad \text{for all } a\in\mathcal A,
\end{align*}
where $\rho_s = \sum_a\eta_{sa}$. The linear constraints apart from $\eta\ge0$ can be computed to be
\begin{align*}
    \rho_1 - \gamma(\eta_{12} + \eta_{13} + \eta_{14} + \eta_{22})& = \mu_1 \\
    \rho_2 - \gamma(\eta_{11} + \eta_{23} + \eta_{24} + \eta_{32}) & = \mu_2 \\
    \rho_3 - \gamma(\eta_{21} + \eta_{33} + \eta_{42}) & = \mu_3 \\
    \rho_4 - \gamma(\eta_{31} + \eta_{43} + \eta_{44} + \eta_{52}) & = \mu_4 \\
    \rho_5 - \gamma(\eta_{41} + \eta_{53} + \eta_{54}) & = \mu_5 \\
    \rho_6 - \gamma(\eta_{34} + \eta_{61} + \eta_{62} + \eta_{11,3}) & = \mu_6 \\
    \rho_7 - \gamma(\rho_1/2 + \eta_{72} + \eta_{73} + \eta_{74} + \eta_{82}) & = \mu_7 \\
    \rho_8 - \gamma(\eta_{71} + \eta_{83} + \eta_{84} + \eta_{92}) & = \mu_8 \\
    \rho_9 - \gamma(\eta_{81} + \eta_{93} + \eta_{10,2}) & = \mu_9 \\
    \rho_{10} - \gamma(\eta_{91} + \eta_{10,3} + \eta_{10,4} + \eta_{11,2}) & = \mu_{10} \\
    \rho_{11} - \gamma(\eta_{10,1} + \eta_{11,1} + \eta_{11,4}) & = \mu_{11} \\
    \rho_{12} - \gamma(\eta_{94} + \eta_{12,1} + \eta_{12, 2} + \eta_{13,3}) & = \mu_{12} \\
    \rho_{13} - \gamma(\rho_1/2 + \eta_{12,4} + \eta_{13,1} + \eta_{13, 2} + \eta_{13,4}) & = \mu_{13}.
\end{align*}
Further, the objective function is given by
    \[ \langle r, \eta \rangle_{\mathcal S\times\mathcal A} = \eta_{11} + \eta_{12} + \eta_{13} + \eta_{14}. \]
Let us now consider the uniform distribution $\mu_s = 1/13$ for $s\in\mathcal S$ as an initial distribution and $\gamma=0.999$ as a discount factor.
Like for the toy problem we used the interior point method \emph{Ipopt} implemented in the Julia packages \texttt{JuMP} and \texttt{Ipopt} to solve this polynomial optimization problem. The solver took around $0.03\textup{s}$ consistently (on a 2 GHz Quad-Core Intel Core i5 processor). The found solution is (rounded to three digits)
    \[ \eta^\ast = 
    \bordermatrix{ & R & L & U & D \cr
     1 & 0.033 & 0.000& 0.000& 0.000 \cr
     2 & 0.044 & 0.033& 0.000& 0.000 \cr
     3 & 0.049 & 0.077& 0.000& 0.000 \cr
     4 & 0.066& 0.049& 0.000& 0.000 \cr
     5 & 0.000& 0.066& 0.000& 0.000 \cr
     6 & 0.000& 0.000& 0.033& 0.000 \cr
     7 & 0.133& 0.000& 0.000& 0.000 \cr
     8 & 0.075& 0.117& 0.000& 0.000 \cr
     9 & 0.057& 0.042& 0.000& 0.000 \cr
     10 & 0.033& 0.024& 0.000& 0.000 \cr
     11 & 0.000& 0.000& 0.033& 0.000 \cr
     12 & 0.000& 0.000& 0.017& 0.000 \cr
     13 & 0.000& 0.000& 0.016& 0.000}
     \in\Delta_{\mathcal S\times\mathcal A}\subseteq\mathbb R^{\mathcal S\times\mathcal A} \]
and has the objective value $\langle r, \eta^\ast\rangle_{\mathcal S\times\mathcal A} = 0.033$. The corresponding optimal state policy $\tau^\ast$ is obtained simply by conditioning on states, and any pre-image under the observation kernel is an optimal observation policy, in this case simply $\pi^\ast = \beta^{+} \tau^\ast$ which is (rounded to two digits) 
    \[ \pi^\ast =    
    \bordermatrix{ & R & L & U & D \cr
    o_1 & 1.00 & 0.00 & 0.00 & 0.00 \cr
    o_2 & 0.53 & 0.47 & 0.00 & 0.00 \cr
    o_3 & 0.48 & 0.52 & 0.00 & 0.00 \cr
    o_4 & 0.00 & 1.00 & 0.00 & 0.00 \cr
    o_5 & 0.00 & 0.00 & 0.99 & 0.00 \cr
    o_6 & 0.00 & 0.00 & 1.00 & 0.00 \cr
    o_7 & 0.00 & 0.00 & 0.99 & 0.00} 
    \in\Delta_\mathcal A^\mathcal O. \]
This policy 
\begin{enumerate}
    \item moves right on observation $1$ corresponding to the states $1$ and $7$, 
    \item moves right and left with probability close to $1/2$ on observation $2$ corresponding to states $2, 4, 9$ and $10$, 
    \item moves right and left with probability close to $1/2$ on observation $3$ corresponding to the states $3$ and $8$, 
    \item moves left on observation $4$ corresponding to the state $5$, 
    \item moves up on observation $5$ corresponding to the states $6$ and $12$, 
    \item moves up on observation $6$ corresponding to the states $11$, 
    \item moves up on observation $7$ corresponding to the state $13$. 
\end{enumerate}
The action choices of the policy $\pi^\ast$ are also shown in Figure~\ref{fig:optimalPolicy}.
Note that the policy $\hat\pi$ selects the best action in the states $1, 5, 6, 11, 12$ and $13$. Those are the states that are either identifiable from its observation (this is the case for $5, 11$ and $13$) or where the optimal actions of all states leading to the same observation agree (this is the case for the pairs $\{1, 7\}$ and $\{6, 12\}$). In the other states, where the corresponding observation is ambiguous, the policy randomizes among the two actions, which are optimal for the compatible states. This is for example the case for the states $2, 4, 9$ and $10$, which all lead to observation $2$. The optimal MDP policy would move left in state $2$ and $4$ and move right in the states $9$ and $10$. The POMDP policy has to randomize between moving left and right, since otherwise the agent could never reach the goal state if starting in $7$ or $13$. The same consideration applies to the states $3$ and $8$, which both lead to observation $3$. The Julia code is available in the supplements and under~\url{https://github.com/muellerjohannes/geometry-POMDPs-ICLR-2022}.
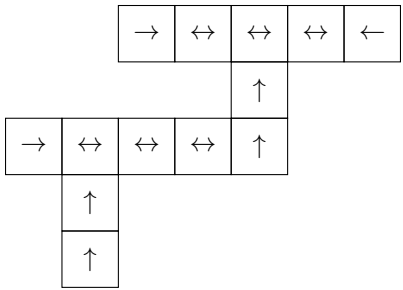
\begin{figure}[h!]
    \centering
    \begin{tikzpicture}
    \node at (0,0) [minimum size=.75cm, draw] {$\rightarrow$};
    \node at (0.75,0) [minimum size=.75cm, draw] {$\leftrightarrow$};
    \node at (1.5,0) [minimum size=.75cm, draw] {$\leftrightarrow$};
    \node at (2.25,0) [minimum size=.75cm, draw] {$\leftrightarrow$};
    \node at (3,0) [minimum size=.75cm, draw] {$\leftarrow$};
    \node at (1.5,-0.75) [minimum size=.75cm, draw] {$\uparrow$};
    \node at (-1.5,-1.5) [minimum size=.75cm, draw] {$\rightarrow$};
    \node at (-0.75,-1.5) [minimum size=.75cm, draw] {$\leftrightarrow$};
    \node at (0,-1.5) [minimum size=.75cm, draw] {$\leftrightarrow$};
    \node at (0.75,-1.5) [minimum size=.75cm, draw] {$\leftrightarrow$};
    \node at (1.5,-1.5) [minimum size=.75cm, draw] {$\uparrow$};
    \node at (-0.75,-2.25) [minimum size=.75cm, draw] {$\uparrow$};
    \node at (-0.75,-3) [minimum size=.75cm, draw] {$\uparrow$};
    \end{tikzpicture}
    \caption{{Depiction of the policy $\hat\pi$, which is found using the polynomial programming formulation of the POMDP and applying the interior point method \emph{Ipopt} implemented in the Julia libraries \texttt{JuMP} and \texttt{Ipopt}; an arrow into two direction indicates that the agent moves into those two directions with probability close to $1/2$.}}
    \label{fig:optimalPolicy}
\end{figure}
}

\subsection{A three dimensional example} 
Let us now discuss an example where the set of discounted state-action distributions is three-dimensional and not two-dimensional as before. For this, we consider a generalization of the previous example where $\lvert\mathcal S\rvert=3, \lvert\mathcal A\rvert=2$ and $\lvert\mathcal O\rvert=3$ such that 
    \[ \dim(\Delta_\mathcal A^\mathcal S) = \dim(\Delta_\mathcal A^\mathcal O) = 3\cdot(2-1) = 3. \] 
The observation mechanism used is
    \[ \beta = \begin{pmatrix} 1 & 0 & 0 \\ 1/2 & 1/2 & 0 \\ 1/3 & 1/3 & 1/3 \end{pmatrix}. \]
Further, the action mechanism $\alpha$ and the initial distribution $\mu$ are sampled randomly and the used discount factor is $0.5$. Since the initial distribution is generic and $\beta$ is invertible, the set of state-action frequencies $\mathcal N_\gamma^{\mu}$ and the set of feasible state-action frequencies $\mathcal N_\gamma^{\mu, \beta}$ are three-dimensional and are in fact combinatorially equivalent to the three-dimensional cube $\Delta_\mathcal A^\mathcal S\cong\Delta_\mathcal A^\mathcal O \cong[0, 1]^3$ (see Theorem~\ref{theo:geometrydiscstadisc}). In Figure~\ref{fig:nonlinearcube} we plot a random three-dimensional projection of the sets. More precisely, we plot their one-dimnesional faces dashed and solid for the MDP and POMDP respectively. The combinatorial equivalence to the three-dimensional cube can be see in this plot. 
\begin{figure}
        \centering
        \includegraphics[width=0.4\textwidth]{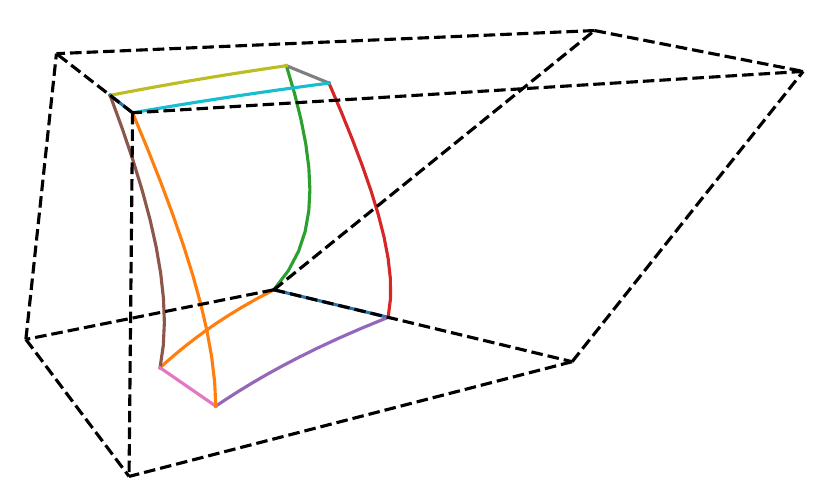}
        \caption{A random three-dimensional projection of the set of discounted state-action frequencies of the MDP is the polytope defined by the dashed straight edges. The same random three-dimensional projection of the set of feasible discounted state-action frequencies is the basic semialgebraic set where the edges are shown by the solid lines. 
        Both of those sets are combinatorially equivalent to a three dimensional cube. 
        }
        \label{fig:nonlinearcube}
\end{figure}

\end{document}